\newtheorem{Theorem}{Theorem}
\newtheorem{Assumption}{Assumption}
\newtheorem{Definition}{Definition}
\newtheorem{Lemma}{Lemma}
\newtheorem{Proposition}{Proposition}
\newtheorem{Remark}{Remark}
\newcommand{\inr}[1]{\bigl< #1 \bigr>}
\newcommand{\norm}[1]{\left\|#1\right\|}%
\newcommand{\an}{{\mbox{ and }}}
\newcommand{\pa}[1]{\left(#1\right)}
\newcommand{\cro}[1]{\left\{#1\right\}}
\newcommand\eps{\epsilon}
\def \endproof
\DeclareMathOperator*{\argmax}{argmax}
\DeclareMathOperator*{\argmin}{argmin}
\newcommand{\ERM}{\hat f_n^{ERM}}
\def\ds1{\textrm{1\kern-0.25emI}} 
\newcommand \E{\mathbb{E}}
\newcommand \R{\mathbb{R}}
\newcommand \cB{{\cal B}}
\newcommand \cC{{\cal C}}
\newcommand \cD{{\cal D}}
\newcommand \cF{{\cal F}}
\newcommand \cG{{\cal G}}
\newcommand \cI{{\cal I}}
\newcommand \cK{{\cal K}}
\newcommand \cL{{\cal L}}
\newcommand \cN{{\cal N}}
\newcommand \cO{{\cal O}}
\newcommand \cP{{\cal P}}
\newcommand \cQ{{\cal Q}}
\newcommand \cR{{\cal R}}
\newcommand \cX{{\cal X}}
\newcommand \bE{{\mathbb E}}
\newcommand \bN{{\mathbb N}}
\newcommand \bP{{\mathbb P}}
\newcommand \bX{{\mathbb X}}
\newcommand \bY{{\mathbb Y}}
\newcommand \bZ{{\mathbb Z}}
\newcommand \fC{{\mathfrak C}}
\newcommand{\hf}{ {\hat f}}
\newcommand{\Crit}[2]{\cC_{#1}\big(#2\big)}
\newcommand{\bayes}{f^*}
\newcommand{\MOM}[2]{\text{MOM}_{#1}\big(#2\big)}
\renewcommand{\ERM}[1]{\widehat{f}_{#1}}
\newcommand{\param}[1]{\theta_{#1}}
\newcommand{\cabs}[1]{c_{#1}}
\begin{document}

\title{Robust machine learning by median-of-means : theory and practice}
\author{G. Lecué and M. Lerasle}
\maketitle
\begin{abstract}
We introduce new estimators for robust machine learning based on median-of-means (MOM) estimators of the mean of real valued random variables. These estimators achieve optimal rates of convergence under minimal assumptions on the dataset. The dataset may also have been corrupted by outliers on which no assumption is granted. We also analyze these new estimators with standard tools from robust statistics. In particular, we revisit the concept of breakdown point. We modify the original definition by studying the number of outliers that a dataset can contain without deteriorating the estimation properties of a given estimator. This new notion of \emph{breakdown number}, that takes into account the statistical performances of the estimators, is non-asymptotic in nature and adapted  for machine learning purposes. We proved that the breakdown number of our estimator is of the order of \textit{number of observations} * \textit{rate of convergence}. For instance,  the breakdown number of our estimators for the problem of estimation of a $d$-dimensional vector with a noise variance $\sigma^2$ is $\sigma^2d$ and it becomes $\sigma^2 s \log(ed/s)$ when this vector has only $s$ non-zero component. Beyond this breakdown point, we proved that the rate of convergence achieved by our estimator is \textit{number of outliers} divided by \textit{number of observations}.

Besides these theoretical guarantees, the major improvement brought by these new estimators is that they are easily computable in practice. In fact, basically any algorithm used to approximate the standard Empirical Risk Minimizer (or its regularized versions) has a robust version approximating our estimators. On top of being robust to outliers, the ``MOM version" of the algorithms are even faster than the original ones, less demanding in memory resources in some situations and well adapted for distributed datasets which makes it particularly attractive for large dataset analysis. As a proof of concept, we study many algorithms for the classical LASSO estimator. It turns out that the original algorithm can be improved a lot in practice by randomizing the blocks on which ``local means" are computed at each step of the descent algorithm. A byproduct of this modification is that our algorithms come with a measure of \textit{depth} of data that can be used to detect outliers, which is another major issue in Machine learning.

\end{abstract}

\section{Introduction}\label{sec:intro}
Recent technological developments have allowed companies and state organizations to collect and store huge datasets. 
These yield amazing achievements in artificial intelligence such as self driving cars or softwares defeating humans in highly complex games such as chess or Go. 
In fact, most big organizations have realized that data will have a major role in the future economy. 
Most companies in banks, electric or oil companies, etc. have a digital branch developing new data-based services for customers and new companies even build all their business on data collected on Twitter, Facebook or Google.

Big datasets have also challenged scientists in statistics and computer science to develop new methods. 
In particular, Machine Learning has attracted a lot of attention over the past few years. 
As explained in \cite{Shalev-Schwarz}, machine learning can be seen as a branch of statistical learning dealing with large datasets where any procedure besides presenting optimal statistical guarantees should be provided with at least a tractable algorithm for its practical implementation. 
This new constraint raised several interesting problems while revisiting the classical learning theory of Vapnik \cite{MR1641250}. 
In particular, to name just a few, algorithms minimizing convex relaxations of the classical empirical $0-1$-risk counting the number of misclassification have been proposed and studied over the last few years. More generally, optimization algorithms are now routinely used and analyzed by statisticians.

\subsection{Corruption of big datasets}

Our theoretical understanding of many classical procedures of machine learning such as LASSO \cite{MR1379242} heavily rely on two assumptions on the data, that should both be i.i.d. and have subgaussian behaviors. As noted in \cite{MR3466175}, ``data from real-world experiments oftentimes tend to be corrupted with outliers and/or exhibit heavy tails". For example, in finance, heavy-tailed processes are routinely used and, in biology or medical experiments datasets are regularly subject to some corruption by outliers. These outliers are even in some applications the data of actual interests, one can think of fraud detections for example. The need for robust procedures in data science can be appreciated, for instance, by the recently posted challenges on ``kaggle'', the most popular data science competition platform. The 1.5 million dollars problem ``Passenger Screening Algorithm Challenge'' is about to find terrorist activity from 3D images. The ``NIPS 2017: Defense Against Adversarial Attack'' is about constructing algorithms robust to adversarial data.

\subsubsection{Current ML solutions are highly sensitive to dataset's corruption}

It is easy to check that most routinely used and theoretically understood algorithms break down when even a single ``outlier" corrupts the dataset. This is the case of the standard least-squares estimator or its $\ell_1$ penalized version known as LASSO for example as one can see from the simulation in Figure~\ref{fig:robustness}

\begin{figure}[!h]
    \centering
    \includegraphics[width=0.4\textwidth]{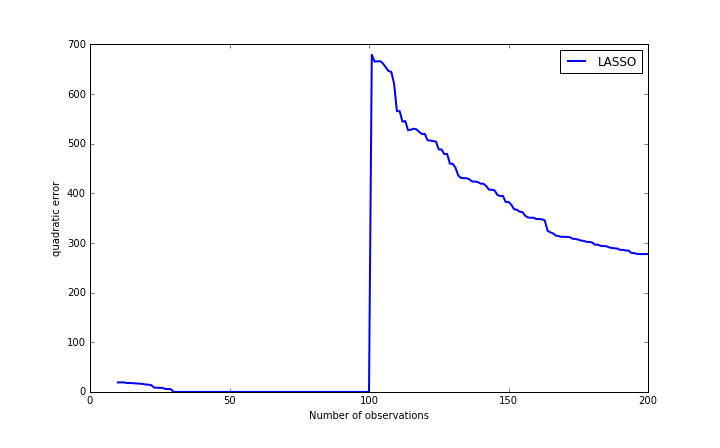}
    \includegraphics[width=0.4\textwidth]{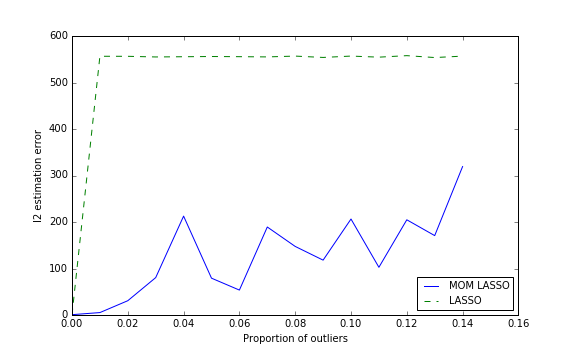}
    \caption{(left): Estimation error of the LASSO after one outliers was added at observation number $100$. (right) Estimation error versus proportion of outliers for LASSO and its "Median of Means (MOM)" version.}
    \label{fig:robustness}
\end{figure}

LASSO is not an isolated example, most algorithms are actually designed to approximate minimizers of penalized empirical loss and empirical means of unbounded functions are highly sensitive to corrupted data. 
In statistics, building robust estimators has been an issue for a long time at least since the work of John Tukey, Frank Hampel and Peter Huber. It has yield interesting ``robust" alternatives to the standard maximum likelihood estimator, one can refer to \cite{MR2488795} for an overview on robust statistics and to Baraud, Birgé and Sart \cite{MR3595933} or Baraud and Birgé \cite{MR3565484} see also \cite{MR3052407,MR2906886} for deep new developments in robust statistics. 

\subsubsection{Resist or detect outliers : a raising challenge in ML}

There are mainly two types of outliers in practice : those corrupting a dataset which are not interesting (outliers can appear in datasets due to storage issues, they can also be adversarial data as fake news, false declarative data, etc.) and those that are rare but important observations like frauds, terrorist activities, tumors in medical images,... 
A famous example of the latter type of ``outlier" discovered unexpectedly was the ozone hole \cite[p.2]{maronna2006robust}. 
In the former case, the challenge is to construct predictions as sharp as if the dataset was clean and in the latter, the main question is to detect outliers.

Of course, any dataset is preprocessed to be ``cleaned" from outliers. 
This step, called ``data jujitsu'', ``data massage'', ``data wrangling'' or ``data munging'' usually takes $80\%$ of data scientists time. 
It requires an important quantity of expertise to guide data scientists \cite{barnett1994outliers}. 
Some platforms like ``Amazon mechanical turk'' allow to preprocess collectively datasets with millions of lines. 
Even when done with maximal care, some contamination always affect modern datasets, because outliers are particularly hard to detect in high dimension and because the distinction between outliers and informative data is often delicate in practice.

For these reasons, robustness has recently received a lot of attention in machine learning. Theoretical results have been obtained when ``outliers" appear because the underlying distribution exhibits heavier tails than subgaussian distributions, producing anomalies. In this case, many attempts have been made to design estimators presenting subgaussian behaviors under minimal assumptions for this to make sense. This program was initiated by Catoni \cite{MR3052407} for the estimation of the mean on the real line and Audibert and Catoni \cite{MR2906886} in the least-squares regression framework. To the best of our knowledge, besides for the estimation of the mean of a real valued random variable where median-of-means estimators \cite{MR1688610, MR855970, MR702836} provide a computationally tractable estimator satisfying optimal subgaussian concentration bounds \cite{MR3576558}, the other estimators are either suboptimal see for example \cite{MR3378468} for the estimation of the mean of a vector in $\R^d$ or \cite{MR3405602} for more general learning frameworks, or completely untractable see for example \cite{LugosiMendelson2017-2, LugosiMendelson2017, MOM1} for some examples in regression frameworks.

\subsection{Robustness : state of the art}


The book \cite{maronna2006robust} traces back the origins of robust statistics to the fundamental works of John Tukey \cite{tukey1960survey,tukey1962future}, Peter Huber \cite{MR0161415,huber1967behavior} and Frank Hampel \cite{hampel1971general,hampel1974influence}.  Tukey asked the following  question for the location problem of a Gaussian distribution ``what happens if the distribution slightly deviates from the normal distribution?''. Tukey's 1960 example shows the dramatic lack of distributional robustness of some classical procedures like Maximum Likelihood Estimator (MLE) \cite[Example~1.1]{huber2009robust}. A related question was asked by Hodges \cite{MR0214251} about ``tolerance to extreme values''.

In fact, one doesn't define one robustness property in general. Estimation, prediction or more generally any statistical properties like consistency, asymptotic normality, minimax optimality, etc. require assumptions (cf. the ``no free lunch theorem'' \cite{DGL:96}). Such assumptions are naturally questionable on real databases. What properties of an estimator pertain when one or several assumptions are not satisfied? As there are several types of assumptions, one can define several types of robustness. For example, Tukey's question is about robustness to a misspecification of the Gaussian model while Hodges question can be understood as robustness to contamination of the dataset by large outliers or robustness to heavy-tailed distributions in the model that may have caused the appearance of large data.

\subsubsection{Natural robustness properties of learning procedures}

Robustness issues are particularly sensitive when data and risk are unbounded. To understand that, suppose first that the loss $\ell$ is the classical (bounded) $0-1$ loss in classification: $\ell_f(x,y)={\bf 1}_{y\ne f(x)}$. Assume that the dataset $\cD$ is made of $N-|\cO|$ i.i.d. data $(X_i,Y_i)_{i\in\cI}$ and $|\cO|$ ``outliers" $(X_i,Y_i)_{i\in\cO}$ that can be anything. The empirical risk
\[
P_N\ell_f=\frac1N\sum_{i=1}^N\ell_f(X_i,Y_i)
\]
satisfies for all $f:\cX\to \left\{-1, 1\right\}$ (where $\cX$ is  the space where the $X_i$ take their values and the $Y_i\in\{-1, 1\}$)
\begin{equation}\label{eq:BLNopb}
 |(P_N-P_{\cI})\ell_f|\leqslant \frac{2|\cO|}N,\qquad \text{where}\qquad P_{\cI}\ell_f=\frac1{|\cI|}\sum_{i\in \cI}\ell_f(X_i,Y_i)\enspace.
\end{equation}
If the fraction $|\cO|/N$ is small enough, the ERM over a class $\cF$ of functions from $\cX$ to $\{-1, 1\}$
\[
\widehat{f}_N=\argmin_{f\in\cF}P_N\ell_f
\]
performs theoretically as well as the ERM based only on the ``good" data $(X_i,Y_i)_{i\in\cI}$
\[
\widehat{f}_{\cI}=\argmin_{f\in\cF}P_{\cI}\ell_f\enspace,
\]
which is statistically optimal in some problems \cite{DGL:96}. In that case, the number of outliers $|\cO|$ has to be less than $N$ times the \textit{rate of convergence of $\widehat{f}_{\cI}$} -- this is a condition we will meet later for our estimators.   

The problem with $\widehat{f}_N$ is that minimizing the empirical loss is NP-hard. To overcome this issue practitioners have introduced several convex relaxation of the $0-1$ loss. As a result, what is actually computed is a minimizer of empirical risk associated with an unbounded loss function $\ell$ (like the square, logistic, hinge or exponential losses) over an unbounded set of functions (as the set of linear functions $\{\inr{\cdot, t},\ {t\in\R^d}\}$). This actual minimization problem is much more sensitive to outliers (no bounds can be provided between the empirical processes such as in \eqref{eq:BLNopb}) and \emph{practical} ERM (actually its proxies) are completely wrong even if the \emph{theoretical} ERM performs well. In the following, we assume that $Y,\cF$ are unbounded and $\ell$ is the (unbounded) square loss : $\ell_f(x,y)=(y-f(x))^2$ that is known to be sensitive to outliers.

Classical estimators, such as the maximum likelihood estimator (MLE) are designed for some very specific choice of statistical model and are therefore extremely model dependent : in the set of all Gaussian distributions with mean $\mu\in\R$ and variance $\sigma>0$, the MLE of $\mu$ is the empirical mean while it is the empirical median in the set of all Laplace distributions with position parameters $\mu\in\R$. 
Tukey's example showed that MLE may perform very poorly under model misspecification. ML and learning theory tackle this issue by not assuming a statistical model on data. Instead of imposing strong assumptions allowing to fit all data distribution, learning procedures typically focus on a simpler task that can be achieved under much weaker restrictions on the randomness of data. 

The typical example is as follows. A dataset $\cD:=\{(X_i,Y_i)_{i=1}^N\}$ of $N$ independent and identically distributed (i.i.d.) variables with values in $\R^d\times \R$ is given. In statistics, the classical (parametric) approach is to start with a statistical model, i.e. a family $(P_{\theta})_{\theta\in\Theta}$ indexed by some subset $\Theta\subset\R^d$ of distributions that should contain that of $(X_1,Y_1)$, or at least that of $Y_1|X_1$. In the Gaussian linear regression model \cite{stigler1986history}, one assumes that $Y_i=\inr{X_i, t^*}+\sigma\zeta_i$, where $t^*\in\R^{d-1}$ and $\sigma>0$ are unknown parameters and $(\zeta_i)_{i=1}^N$ are i.i.d. standard Gaussian random variables  independent of $(X_i)_{i=1}^N$. The problem is to estimate the unknown parameters $\theta=(t^*,\sigma)\in\R^d$ given the dataset $\cD$ \emph{assuming that the distribution of $(Y_1|X_1)$ belongs to the set of Gaussian distributions $(P_{\theta})_{\theta\in\Theta}$}. 

In Learning Theory, no assumption on the conditional expectation $\E[Y_1|X_1]$ nor any particular form for the conditional distribution of $Y_1|X_1$
is assumed. 
For instance, $Y_i$ can be any function of $X_i$, it may also be independent of $X_i$. Given a new input $X$, one still wants to guess the associated output $Y$. To proceed, a class of functions $F$ is proposed containing functions $f\in F$ used to guess the value $f(X)$ of $Y$. It is not assumed that the regression function $x\mapsto \E[Y|X=x]\in F$, we just hope that the ideal choice $f^*$ in $F$ allows to build a good guess $f^*(X)$ of $Y$. The learning task is to provide from the dataset a good approximation of $f^*$. If $f^*(X)$ is actually a good guess of $Y$, then so will be any good approximation of $f^*$ applied in $X$, if not then we simply end up with a bad guess of $Y$. 
The function $f^*$ providing the best guess $f^*(X)$ of $Y$ is called an oracle and the difference $Y-f^*(X)$ is called the noise.
Assume that $F=\{\inr{\cdot, t}: t\in\R^d\}$ is the class of linear functionals. If the regression function $x\mapsto \E[Y|X=x]$ is assumed to be in $F$ then 
the tasks in Machine Learning and Statistics are essentially the same because the oracle equal the regression function. The difference is that learning procedures 
consider the estimation of $f^*$ that may not be the regression function without assuming a parametric family of distributions for the noise. As such, the misspecification problem is naturally tackled in Machine Learning.


\subsubsection{Robustness to heavy-tailed data}

The Gaussian assumption on the noise $Y_1-f^*(X_1)$ in statistics is usually weakened in learning theory to a subgaussian assumption (the $L_p$-moments of the noise are bounded by those of a Gaussian variable). This subgaussian assumption is already much more flexible and essentially allows to extend the use of least-squares estimators and their penalized versions to a much broader class of possible distributions for the noise. Going beyond the subgaussian assumption is in general way more technical. One way to proceed is to assume a subexponential assumption -- random variables whose $L_p$ moments are smaller than those of an exponential variable-- as in the Bernstein's condition in \cite{MR1739079}. More recently, the subexponential assumption has also been relaxed into $L_p$-moment assumptions. Least-squares estimators have been studied as well as some modifications less sensitive to large data for instance in \cite{MR3052407,MR2906886,LM_reg1,LM_lin_agg}. It appears that the statistical properties of least-squares estimators are deteriorated when the noise satisfies weaker assumptions \cite{LM_lin_agg}. A modification of the ERM based on Le Cam's estimation by tests principle \cite{MR0334381,MR856411} has also been considered in \cite{LugosiMendelson2016,MOM1}. 


\subsection{Robustness in regression}


Most of the robustness literature of the 1980s was on the regression problem \cite{davies1993aspects},  \cite[Section~7.12]{huber2009robust}. 
These methods make a distinction between the problem of robustness with respect to the outputs $(Y_i)_{i=1}^N$ on one side and with respect to the inputs $(X_i)_{i=1}^N$ on the other side \cite[Chapter~7]{huber2009robust}. Outliers in the inputs remain extremely difficult to handle while many solutions have been proposed to deal with outliers in the outputs.

\subsubsection{Outliers in the inputs in regression}

Even if some solutions have been proposed in particular cases, see for example \cite{MOFaser2000}, the problem of robustness with respect to outliers in the inputs $X_1, \ldots, X_N$ (also called \emph{leverage points}), has not been solved in general neither in theory nor in practice.
In the book \cite{huber2009robust}, a three steps approach is suggested to solve the robustness problem in regression
\begin{enumerate}
\item build redundancy into the design matrix $\bX$ if possible,
\item find routine methods when there are only few leverage points,
\item find analytical methods for identifying leverage points and, if possible, leverage groups.
\end{enumerate}
In other words, \cite{huber2009robust} suggest to make an important ``datacleaning'' step before any analysis for solving the leverage point problem. 
As already discussed, this may not be possible for nowadays large datasets. Therefore, designing procedures that can handle outliers such as leverage points is an important task that requires new ideas.

In statistics, an elegant solution to this issue (and many others) called $\rho$-estimators \cite{MR3595933,BarBir2017RhoAgg2} has recently been proposed. However, $\rho$-estimators do not provide a satisfying solution for learning issues since they require a statistical model (even if almost no assumption besides a controlled massiveness are required on this model), they are specifically built for the Hellinger loss (although they seem to work extremely well for other risk function in some special cases) and they remain far from being computable in general which is dead-end for Machine Learning.

\subsubsection{Outliers in the outputs}

Robust procedures can be analyzed under the assumption that ``we can act as if the $x_{ij}$ are free of gross errors'' \cite[paragraph~3 in Section~7.3]{huber2009robust} ($x_{ij}$ being the $j$-th coordinate of $X_i$ for $i\in[N]$ and $j\in[d]$). 
%
In this approach, the square loss is replaced by a convex differentiable function $\rho$ and the estimator minimizes
\begin{equation}\label{eq:M-estimator}
 \sum_{i=1}^N \rho\left(Y_i-\inr{X_i, t}\right)\enspace.
 \end{equation}
If $\psi=\rho^\prime$, the estimator is also solution of the equation
 \begin{equation}\label{eq:equation_huber}
 \sum_{i=1}^N X_i\psi\left(Y_i-\inr{X_i, t}\right)=0\enspace.
 \end{equation} 
 This estimator is robust when $\psi$ is bounded since even gross errors in a few $Y_i$'s don't affect the right hand side of \eqref{eq:equation_huber}. Classical examples of $\psi$ functions include the Huber function $\psi(t) = t \min(1, c/|t|)$  or Tukey's bisquare function $\psi(t) = t(1-(t/c)^2)^2 I(|t|\leqslant c)$ for all $t\in\R$. In both cases, a tuning parameter $c$ needs to be specified in advance to ultimately balance between resistance to outliers and bias of the estimation. Earlier regression estimate go back to Laplace and its Least Absolute Deviation (LAD) or $L_1$-estimate that minimizes $\sum_{i}|Y_i-\inr{X_i, t}|$ yielding the median in the location problem \cite{dodge1987introduction}. These M-estimation strategies work well both in theory and practice \cite{huber2009robust,maronna2006robust}, but fail totally when even one $X_i$ has a gross error. In the regression problem $Y_i=\inr{X_i, t^*}+\xi_i$ where $\bX^\top\bX = I_{d\times d}$ and $\xi_1, \ldots, \xi_N$ are i.i.d. random errors such that $\E \psi(\xi_i)=0$, asymptotic results for M-estimators can be found in \cite{huber2009robust} when $d$ is fixed and $N$ goes to infinity. When $d$ is allowed to grow with $N$, some results have also been obtained but, as noted in \cite[p168]{huber2009robust}, under conditions that are, for all ``\textit{practical purposes worthless}'' since ``\textit{already for a moderately large number of parameters, we would need an impossibly large number of observations}''.  When the inputs are clean of outliers, subgaussian random variables, or when there is no design, non asymptotic results on minimizers of the Huber loss have also been obtained recently in the small and large dimension settings \cite{Fan1,Fan2}.
 
 Many alternative robust procedures can be found in the regression literature, among other, one can mention the least median of squared residual estimator \cite{rousseeuw1984robust}  and the least trimmed sum of squares estimator \cite{rousseeuw1983regression} which minimizes the sum of the $N/2+1$ smallest squared residuals. 
 Finally, \cite{hampel1975beyond} introduced Least Median of Squares in regression which naturally extends the median estimator in the location problem : 
\begin{equation*}
\hat t \in\argmin_{\theta\in\R^d} {\rm Median}\left(Y_1 - \inr{X_1, \theta},\cdots, Y_N- \inr{X_N, \theta} \right).
\end{equation*}
Its computational complexity rises exponentially with $d$ \cite[p.330]{hampel1986linear}, \cite{massart1986least}. 


\subsubsection{Other challenges}

We discussed the problems of detecting outliers and resisting corruption of the inputs earlier. Besides these, robust statistic has left some interesting open questions that have to be addressed. 

Most results in robust statistics are asymptotic but the asymptotic approach may be quite misleading : ``\textit{
[...] approaches based on asymptotics are treacherous, and it could be quite misleading to transfer insights gained from asymptotic variance theory or from infinitesimal approaches (gross error sensitivity and the like) by heuristics to leverage points with $h_i>0.2$}'' \cite[p.189]{huber2009robust}. In other words, robustness issues ultimately have to be tackled from a non-asymptotic point of view. Again, $\rho$-estimators \cite{MR3595933,BarBir2017RhoAgg2} treat this issue from a statistical perspective but cannot be applied in our learning setting.

The actual solution to get rid of some data a priori and work on what is supposed to be clean data seems quite dangerous. First, if ``outliers" are due to heavy-tailed data, this can induce some bias in the estimation. Second, data can be typical for the estimation of some means $P\ell_f$ but not for other choices of loss function $\ell'$ and/or function $f'$. An estimator $\widehat{P\ell_f}$ may even get better if one uses some data while another $\widehat{P\ell_f}'$ would get worse. The definition of outliers is thus not clear, it may depend on the estimators, the risk function and the parameter of interest. Therefore, if some subsampling is performed to get more robustness, it should be done with respect to the loss $\ell$ and  point $f$ and not on systematic preprocessing algorithms blind to the specific learning problem we want to solve.

 When designing robust strategies for ML, one should keep in mind that these should be computationally tractable. To the best of our knowledge, no estimator robust to outlier in the outputs nor any general learning procedure robust to heavy-tailed data can currently be used in a high dimensional learning problems. \emph{There is currently no available algorithm for high dimensional learning whose good statistical performance don't break down in the presence of one outlier in the dataset}.

\subsection{Contributions}

%

In this paper, we propose a new estimator of the learning function 
\[
f^*=\argmin_{f\in F}P[(Y-f(X))^2]\enspace.
\]
This new estimator has minimax-optimal non-asymptotic risk bounds even when the dataset has been corrupted by outliers (that may have corrupted indifferently the inputs, the outputs or both of them) and when informative data (those that are not outliers) only satisfy somehow minimal assumptions relating their $L_2$ geometry on $F$ to that of $P$ (see Sections~\ref{sec:OUIDec} and \ref{sec:Ass} for details). This estimator solves therefore all challenges related to outliers in the learning theory literature including ``adversarial outliers" -- which are data satisfying no assumptions -- and ``stochastic outliers" -- which are informative data that may look like outliers because of some weak concentration properties due to heavy tail distributions.


We also show that our estimator can be used for ML purposes by providing several algorithms to compute proxies of our estimators and illustrate their performance in an extensive simulation study (see Section~\ref{sec:simulation_study}). This opens several interesting conjectures for example, to study and compare theoretically convergence of these algorithms, 
and to automatically calibrate our estimators.

\subsubsection{Going beyond the i.i.d. setup: the $\cO\cup \cI$ framework}\label{sec:OUIDec}

We propose the following setup that allows the presence of outliers and relax the standard i.i.d. setup.
We assume that data are partitioned in two groups (unknown from the statistician), more precisely $\{1,\ldots,N\}=\cO \cup \cI$, with $\cO\cap\cI=\emptyset$ 
\begin{itemize}
 \item data $(X_i, Y_i)_{i\in\cO}$ ($\cO$ stands for ``outliers'') are not assumed to be independent nor independent to the other ones $(X_i, Y_i)_{i\in\cI}$, they may not be identically distributed, in fact \emph{nothing is required on these data}; they can even be adversarial in the sense that may have been designed to make the learning task harder,
  \item data $(X_i, Y_i)_{i\in\cI}$ (called \textit{informative}) are the only data on which we can rely on to solve our learning task. In what follows, we only need very weak assumptions on those data to make MOM estimators achieving optimal statistical properties. In particular, we will always assume that the informative data are independent.
\end{itemize}


As we allow the presence of outliers, our procedure will be in a sense robust to the i.d. (identically distributed) assumption. This robustness goes actually further, since even informative data won't be assumed identically distributed. Instead, our procedure will be shown to work under the much weaker 
requirement that the distributions of $(X_i, Y_i)$ for all $i\in\cI$ induce an equivalent $L_2$ metric over the class $F$ and equivalent covariance between functions in $F$ and the output. If various distributions have equivalent first and second moments for all functions $f\in F$, our estimators will exhibit the same optimal performance as if they were i.i.d. Gaussian with similar first and second moments on all $f\in F$. Our new assumption feels more natural than the usual i.i.d one as distributions inducing the same ``risk geometry" on $F$ seem interesting to solve our learning task and any other higher moment assumption above $2$ seems unnatural. However, the standard ERM once again would fail to achieve this goal as the behavior of the supremum over subclasses of $F$ of the empirical process used to bound the risk of the ERM depends in general on higher moments of these distributions.

\subsubsection{Quantifying robustness by breackdown point}

%
%
The breakdown point of an estimator \cite{hampel1968contribution,donoho1983notion,huber2009robust} is the smallest proportion of corrupted observations necessary to push an estimator to infinity \cite[p.1809]{donoho1992breakdown}, \cite[p.279]{huber2009robust}. The notion has been introduced by Hampel \cite{hampel1968contribution} in his 1968 Ph. D. thesis. In \cite{donoho1983notion,donoho1982breakdown}, the notion was revisited with the following non-asymptotic definition, see also \cite{siegel1982robust, hampel1973robust}. Let $\cD_{\cI}$ denote a dataset
and let $T$ be an estimator. 
If there exists a strategic / malicious / adversarial choice of another dataset $\cD_{\cO}$ such that $T(\cD_{\cI}\cup \cD_{\cO})-T(\cD_{\cI})$ is arbitrarily large, the estimator is said to break down under the contamination fraction $|\cD_\cO|/(|\cD_\cI|+|\cD_\cO|)$. The \textbf{Donoho-Hampel-Huber breakdown point $\epsilon^*(T, \cD_{\cI})$} is then defined as the smallest contamination fraction under which estimator $T$ breaks down:
\begin{equation}\label{eq:breakdown_point}
 \epsilon^*(T, \cD_{\cI}) = \min_{m\in\bN}\left\{\frac{m}{|\cD_\cI|+m}: \sup_{\cD_{\cO}:|\cD_\cO|=m}|T(\cD_{\cI}\cup \cD_{\cO})-T(\cD_{\cI})| = \infty\right\}.
 \end{equation} 

 For the estimation of the mean $\mu$ of a random variable from $N$ i.i.d. observations, the empirical mean can be made arbitrarily large by adding a single ``bad'' observation. The breakdown point of the empirical mean estimator is therefore $1/(N+1)$ and this is the worst possible value for an estimator. The empirical mean is not robust to outliers, it performs very poorly in corrupted environments.  On the other hand, the empirical median has a breakdown value of $1/2$ for the location problem. 
 
 
Constant estimators have breakdown points equal to $1$. To avoid these degenerate examples, one restricts the set of estimators to consider optimality from the breakdown point of view. For instance, in the regression problem, one can consider \textbf{equivariant} estimators \cite{davies1993aspects,davies2005breakdown} and \cite[p.92]{maronna2006robust}, that is estimators $T$ invariant by affine transformation of the data: for all $a\in\R^d$ and $\lambda\in\R$,
\begin{equation}\label{eq:equivariant}
T((X_i, \lambda(Y_i+\inr{X_i, a}))_{i=1}^N) = \lambda \left(T((X_i, Y_i)_{i=1}^N) + a\right).
\end{equation}
Among equivariant estimators, the common sense heuristic applies : ``when more than half data are bad, one cannot learn from good ones" \cite{davies2005breakdown}. In particular, among equivariant estimators, $1/2$ is the best possible breakdown point for an estimator and the empirical median is optimal from this perspective.
%
When the parameter of interest belongs to a bounded set, one can adapt the definition of breakdown point as in the discussion papers of \cite{davies2005breakdown}.   
%
Other generalization to quantile estimation can be found in \cite{rousseeuw1993alternatives}.

Similar results are much harder to obtain in higher dimensions, the main reason being the absence  of a natural notion of median or quantiles in dimension $d\geqslant2$. This problem gave birth to the construction of \textit{depths}, the most famous one being Tukey's depth \cite{tukey74a,tukey74b}. The notions of median, trimmed mean and covariance resulting from Tukey's depth yield robust estimators in high dimensions. For instance, in \cite{donoho1982breakdown,donoho1992breakdown}, the depth-trimmed mean and the deepest point are affine invariant estimators 
with breakdown points $1/3$. It is even possible to achieve a breakdown point for the location problem in $\R^d$ close to $1/2$ by using a weight function based on Tukey's depth \cite{stahel1981breakdown,donoho1982breakdown}.

\subsubsection{A modified notion of breakdown point for ML}

We introduce a new concept of breakdown point for Learning. 
The standard breakdown point \eqref{eq:breakdown_point} does not come with any estimation or prediction property. 
As already mentioned, this leads to consider restricted classes of estimators to study optimality from the breakdown point of view to avoid degenerate cases.
%
Instead of imposing algebraic restrictions, our new definition focus on the risk of the procedures. 
 
\begin{Definition}\label{def:BP_learning_theory}
Let $\delta\in(0,1)$, $\cR>0$, $N\geqslant 1$, $F$ be a class of functions from $\cX$ to $\R$ and $\cP$ be a set of distributions on $\cX\times\R$. Let $T:\cup_{n\geqslant 1}(\cX\times\R)^n\to F$ denote an estimator and let $\cD=\{(X_i,Y_i)_{i=1}^N\}$ be a dataset made of $N$ i.i.d. random variables with a common distribution in $\cP$. For any $P\in\cP$, let $f^*_{P}\in\argmin_{f\in F}\E_{(X, Y)\sim P}[(Y-f(X))^2]$. The breakdown number of the estimator $T$ on the class $\cP$ at rate $\cR$ with confidence $\delta$ is
\[
K^*_{\text{ML}}(T,N,\cR,\delta,\cP)=\min\left\{k\in \bZ_+ : \inf_{P\in\cP}\bP_{\cD\sim P^{\otimes N}}\pa{\sup_{|\cO|=k}\norm{T(\cD\cup\cO)-f^*_{P}}_{L^2(P_X)}\leqslant\cR}\geqslant 1-\delta\right\}
\]where $P_X$ denotes the marginal on $\cX$ of $P$.
\end{Definition}
 In words, the breakdown number is the minimal number of points one has to add to a dataset to break statistical performance of an estimator $T$. In this definition, the rate $\cR$ one can achieve is of particular importance. Ultimately, we would like to take it as small as possible. To that end, recall the definition of a minimax rate of convergence.

\begin{Definition}\label{def:minimax}
Let $\delta>0$, $N\geqslant 1$, $\sigma>0$, $F$ be a class of functions from $\cX$ to $\R$ and, for any $f\in F$, let $P_f$ denote the distribution of $(X,Y)$ when $X$ is a standard Gaussian process on $\cX$, $\xi$ is a standard Gaussian random variable on $\R$  independent of $X$ and $Y=f(X)+\sigma\xi$. For any $f\in F$, denote by $\bP_f=P_f^{\otimes N}$ the distribution of a sample $(X_i,Y_i)_{i=1,\ldots,N}$ of $N$ i.i.d. random variables distributed according to $P_f$. Any estimator $\ERM{N}=T((X_i,Y_i)_{i=1,\ldots,N})$ is said to perform with accuracy $\cR$ and confidence $\delta$ uniformly on $F$ if 
\[
\forall f\in F,\qquad \bP_f\pa{\norm{\ERM{N}-f}_{L^2(P_X)}^2\leqslant \cR}\geqslant 1-\delta\enspace.
\]
The minimax accuracy $\cR(\delta,F)$ is the smallest accuracy that can be achieved uniformly over $F$ with confidence $\delta$ by some estimator.
\end{Definition}

The minimax rate of convergence defined in Definition~\ref{def:minimax} is obtained in the ``Gaussian model" (that is when the dataset is made of $N$ i.i.d.  observations in the regression model with Gaussian design and Gaussian noise independent of the design) which is somehow the benchmark model in statistics in which one can derive minimax rates of convergence over $F$. We will therefore use the minimax rates obtained in this model as benchmark rates of convergence.

For any class $\cP$ containing the Gaussian model indexed by $F$, that is  $(P_f)_{f\in F}$ (we shall always work under this assumption in the following) and any $\cR<\cR(\delta,F)$, it is clear that  $K^*_{\text{ML}}(T,N,\cR,\delta,\cP)=0$ as no estimator can even achieve the rate $\cR$ even on  datasets containing no outliers. The notion is therefore interesting only for rates $\cR\geqslant \cR(\delta,F)$ and we shall be particularly interested by rates $\cR=C\cR(\delta,F)$ for some absolute constant $C\geqslant 1$ since estimators with $K^*=K^*_{\text{ML}}(T,N,\cR,\delta,\cP)>0$ are minimax (therefore statistically optimal) even when the dataset has been corrupted by up to $K^*$ outliers. Furthermore, we will be interested in classes $\cP$ much larger than $(P_f)_{f\in F}$, typically, one would like $\cP$ to be the class of all distributions $P$ of $(X,Y)$ such that $Y$ and all $f(X)$ with $f\in F$ have finite second order moments since these are the minimal properties giving sense to the square risk $\E[(Y-f(X))^2]$. Finally, the following link between the classical breakdown point and the new breakdown number always holds : for any $\delta>0$, $\cR>0$, any class $\cP$, any dataset $\cD$ and any estimator $T$, one has
\[
\epsilon^*(T,\cD)\geqslant \frac{1+K^*_{\text{ML}}(T,|\cD|,\cR,\delta,\cP)}{1+K^*_{\text{ML}}(T,|\cD|,\cR,\delta,\cP)+N}\enspace.
\]

Using this new definition, one can be interested in various existence or optimality properties such as:  
\begin{enumerate}
	\item Given $0<\delta<1$, $C>0$, what is the largest set of sample distributions  $\cP_{\delta,C}$ where one can build estimators with accuracy $C\cR(\delta,F)$ of the same order as in the Gaussian model ?
	\item  given $0<\delta<1$, $\cR>\cR(\delta,F)$ and $\cP\subset \cP_{\delta,C}$, is there some procedure $T$ such that $K^*_{\text{ML}}(T,N,\cR,\delta,\cP)>0$?
 \item given $0<\delta<1$, $\cR>\cR(\delta,F)$ and $\cP\subset \cP_{\delta,C}$,  how large can be $\sup_T K^*_{\text{ML}}(T,N,\cR,\delta,\cP)$? and is this maximum achievable by a computationally tractable estimator $T$?
\end{enumerate}

%
%

Most recent theoretical works on robust learning approaches focus on the first question, showing that statistical optimality can actually be achieved over much broader classes of distributions than the Gaussian model, see \cite{LugosiMendelson2016,LugosiMendelson2017} for example. In this paper, we build on the approach introduced in \cite{MOM1} where we proved that, for any $\delta$ smaller  than $\exp(-CN r_N^2)$, there exists estimators such that $K^*_{\text{ML}}(T,N, r_N^2, \delta,\cP)\geqslant CN r_N^2$ where $r_N^2$ is the minimax rate of convergence in the Gaussian model for exponential deviation. It is for instance given by $r_N^2 = \sigma^2 d/N$ for the problem of estimation of a $d$-dimensional vector and $r_N^2 = \sigma^2 s \log(ed/s)/N$ when this vector has only $s$ non-zero coordinates. 
But we go further in this paper, showing that these performance can be achieved by a procedure that is computationally tractable, hence by a Machine Learning algorithm. This ultimate breakthrough is due to a new estimator that we shall now introduce.

\subsubsection{Our estimators}

The key equation to understand our procedure is that the target 
\[
\bayes=\argmin_{f\in F}\E[(Y-f(X))^2]
\]
can be obtained as the solution of the following minimaximization problem
\[
\bayes=\argmin_{f\in F}\sup_{g\in F}\E[(Y-f(X))^2-(Y-g(X))^2]\enspace.
\]
This remark is obvious since the expectation operator is linear, it is fundamental though in our construction as we use non-linear estimators of the expectation. 
More precisely, unknown expectations are estimated by median-of-means (MOM) estimators \cite{MR1688610, MR855970, MR702836} : given a partition of the dataset into blocks of equal sizes, MOM is the median of the empirical means over each block, see Section~\ref{sec:MOMTests} for details. 
MOM estimators are naturally more robust than the empirical mean thanks to the median step : if the number of blocks is at least twice that of the outliers, outliers can only affect less than half blocks so MOM remains an acceptable estimator of the mean. 
MOM's estimators are non linear, therefore plugging a MOM estimator into the minimization problem doesn't yield the same estimator as the plugging estimator on the minimaximization problem. More precisely, it is not so hard to see that MOM minimizers cannot achieve (fast) minimax rates $\cR(\delta,F)$. This is why we focus in the following on the minimaximization problem

MOM estimators of the increments of criteria have already been used in \cite{LugosiMendelson2016,LugosiMendelson2017,MOM1}.  What is new here is that, instead of combining these estimators to build a ``tournament" as in \cite{LugosiMendelson2016,LugosiMendelson2017} or using the approach of Le Cam as in \cite{MOM1}, we simply plug these estimators in the minimaximization problem. Our construction is therefore extremely simplified compared to these previous papers, we'll show that our new estimator achieves the same theoretical properties as these alternatives. 

The main difference is that this new estimator is easy to implement. Given $(f_t,g_t)$ and $\ell_f(x,y)=(y-f(x))^2$, one can find a median block $B_{\text{med}}$ such that
\[
P_{B_{\text{med}}}[\ell_{f_t}-\ell_{g_t}]=\text{median}(P_{B_{k}}[\ell_{f_t}-\ell_{g_t}], \ k\in \{1,\ldots,K\})\enspace.
\]
Then one can perform a descent/ascent algorithm  over the block $B_{\text{med}}$ to get the next iteration $(f_{t+1}, g_{t+1})$. In practice though, this basic idea can be substantially improved by shuffling the blocks at each time step, cf. Section~\ref{sub:maximinimization_saddle_point_random_blocks_and_outliers_detection}. The first advantage of this shuffling step is that the algorithm doesn't converge to local minimaxima. 

Furthermore, our algorithm defines a natural notion of depth of data : deep data are typically regularly chosen as member of the median block $B_{\text{med}}$ while ``outliers" on the other hand are typically left aside. This notion of depth, based on the risk function, is natural in our learning framework and should probably be investigated more carefully in future works. It also suggests an empirical definition of outliers and therefore an outliers detection algorithm as a by-product.
%
%

%
%
 
The paper is organised as follow. Section~\ref{sec:setting} introduces formally the classical least-squares learning framework and presents our new estimator, Section~\ref{sec:AssMR} details our main theoretical results, that are proved in Section~\ref{sec:proofs}. An extended simulation study is provided in Section~\ref{sec:simulation_study}, including the presentation of many robust versions of standard optimization algorithms. Theoretical abstract rates derived in the main results are particularized in Section~\ref{sec:further_examples_of_applications} into important problems in Machine Learning to show the extent of our results. These main results focus on penalized version of the basic tests presented in this introduction that are well suited for high dimensional learning frameworks, we complete these results in Section~\ref{sec:learning_without_regularization_and_minimax_optimality}, providing results for the basic estimators without penalizations in small dimension. Finally, Section~\ref{sec:minimax_optimality_of_} provides some optimality results for our procedures.

\section{Setting}\label{sec:setting}
Let $\cX$ denote a measurable space and let $(X,Y),(X_i,Y_i)_{i\in[N]}$ denote random variables taking values in $\cX\times \R$.
 Let $P$ denote the distribution of $(X,Y)$ and, for $i\in[N]$, let $P_i$ denote the distribution of $(X_i,Y_i)$. Let $F$ denote a convex class of functions $f:\cX\to\R$ and suppose that $F\subset L^2_P$, $\E[Y^2]<\infty$. For any $(x, y)\in\cX\times\R$, let $\ell_f(x,y)=(y-f(x))^2$ denote the square loss function and let $f^*$ denote an oracle
 \[
 f^*\in\argmin_{f\in F}P\ell_f\qquad \text{where} \qquad  \forall g\in L^1_P,\;Pg=\E[g(X,Y)]\enspace. 
 \]
For any $Q\in \{P,(P_i)_{i\in [N]}\}$ and any $p\geqslant 1$, let $\norm{f}_{L^p_{Q}}=(Q|f|^p)^{1/p}$ the $L_Q^p$-norm of $f$ whenever it's defined. Finally, let $\norm{\cdot}$ be a norm defined on the span of $F$; $\norm{\cdot}$  will be used as a regularization norm.

\subsection{Minimaximization}\label{sec:minimaxMOM}
Since $f^*$ minimizes $f\to P\ell_f$ and the distribution $P$ is unknown, it is estimated in empirical risk minimization \cite{MR1719582,MR1739079,MR2829871} by $\ERM{\text{ERM}}$ minimizing $f\mapsto P_N \ell_f$, where for any function $g$, $P_Ng =N^{-1}\sum_{i=1}^Ng(X_i,Y_i)$. This approach works well when $P_N \ell_f$ is close to $P\ell_f$ for all functions in $F$. This uniform proximity requires strong concentration properties. 
%
Moreover, a single outlier can break down all estimators $P_N \ell_f$.


A key observation in our analysis is that $f^*$ is solution of a minimaximization problem:
\begin{equation}\label{eq:LearningFromTests}
\bayes=\argmin_{f\in F}P\ell_f=\argmin_{f\in F}\sup_{g\in F}P(\ell_f-\ell_g)\enspace. 
\end{equation}
To estimate the unknown expectations $T_{\text{id}}(g,f)=P(\ell_f-\ell_g)$, we use a Median-Of-Means (MOM) estimator of the expectation. As this estimator is non linear, 
plugging a MOM estimator in the minimaximization problem or in the minimization problem in \eqref{eq:LearningFromTests} does not yield the same estimator.

The minimaximization point of view has been considered in \cite{MR2906886} and \cite{MR3595933,MR3565484}. These papers used bounded modification of the quadratic loss. As for classification, the impact of outliers on the empirical mean is negligible if the loss is bounded and the number of outliers controlled, which ensures robustness. The cost to pay is that the function to be minimized is not convex and is therefore hard to implement. 
The idea of estimation by aggregation of tests is due to Le Cam \cite{MR0334381, MR856411} and was further developed by Birg\'e \cite{MR2219712}, Baraud \cite{MR2834722}, Baraud, Birg\'e and Sart \cite{MR3595933}. 
%
 $\text{MOM}$ estimators have been recently considered see \cite{LugosiMendelson2016, MOM1, LugosiMendelson2017, LugosiMendelson2017-2} and Section~\ref{sec:MOMTests} for details. Nevertheless, to the best of our knowledge, this paper is the first to consider the estimators obtained by plugging MOM's estimators in the minimaximization problem \eqref{eq:LearningFromTests}. This approach will proved to be efficient for designing algorithms.


\subsection{$\text{MOM}$ estimator}
\label{sec:MOMTests}

Let $K$ denote an integer smaller than $N$ and let $B_1,\ldots,B_K$ denote a partition of $[N]$ into blocks of equal size $N/K$ (w.l.o.g. we assume that $K$ divides $N$ and that $B_1$ is made of the first $N/K$ data, $B_2$ of the next $N/K$ data, etc.). For all function $\cL:\cX\times\R\to\R$ and  $k\in[K]$, let $P_{B_k}\cL=|B_k|^{-1}\sum_{i\in B_k}\cL(X_i,Y_i)$. Then  $\MOM{K}{\cL}$ is a median of the set of $K$ real numbers $\{P_{B_1}\cL, \cdots, P_{B_K}\cL\}$. 

We will make an extensive use of empirical medians and quantiles in the following. We now precise some conventions used repeatedly hereafter. For all $\alpha\in (0,1)$ and real numbers $x_1,\ldots,x_K$, we denote by 
\[
\cQ_{\alpha}(x_1, \ldots, x_K)=\left\{u\in \R: \quad |\{k \in [K] : x_k\geqslant u\}|\geqslant (1-\alpha)\ell,\quad |\{k\in [K] : x_k\leqslant u\}|\geqslant \alpha\ell\right\}\enspace.
\]
Any element in $\cQ_{\alpha}(x)$ is a $(1-\alpha)$-empirical quantile of the vector $x_1,\ldots,x_K$. Hereafter, $Q_{\alpha}(x)$ denotes an element in $\cQ_{\alpha}(x)$. For all $x=(x_1,\ldots,x_K)$, $y=(y_1,\ldots,y_K)$ and $t\in\R$,
\begin{align*}
 Q_{\alpha}(x)\geqslant t& \qquad\text{iff}\qquad  \sup\cQ_{\alpha}(x)\geqslant t \enspace,\\
 Q_{\alpha}(x)\leqslant t& \qquad\text{iff}\qquad  \inf \cQ_{\alpha}(x)\leqslant t\enspace,\\
 z=Q_{\alpha}(x)+Q_{\alpha}(y)&\qquad\text{iff}\qquad z\in \cQ_{\alpha}(x)+\cQ_{\alpha}(y)
\end{align*}
where in the last inequality we use the Minkowsky sum of two sets. More generally, inequalities involving empirical quantiles are always understood in the worst possible case. For example, one can check that, for all vectors $x$ and $y$
\[
\sup \cQ_{1/4}(x) + \sup\cQ_{1/4}(y)\leqslant \inf\cQ_{1/2}(x+y)\enspace.
\]
In the following, we will simply write with some abuse of notation that for all vectors $x$ and $y$: 
\[
Q_{1/4}(x) + Q_{1/4}(y)\leqslant Q_{1/2}(x+y)\enspace. 
\]
Likewise, one can check that $Q_{1/2}(x-y)\leqslant Q_{3/4}(x) - Q_{1/4}(y)$ and $Q_{1/2}(x) \geqslant  - Q_{1/2}(-x)$. Moreover, to prove that $Q_{\alpha}(x)\geqslant t$ it is enough to prove that $\exists J \subset[K], |J|\geqslant (1-\alpha)K, \forall k\in J, x_k\geqslant t$ and to prove that $Q_{\alpha}(x)\leqslant t$ it is enough to prove that $\exists J \subset[K], |J|\geqslant \alpha K, \forall k\in J, x_k\leqslant t$. 

We conclude this section with the definition of MOM tests and their regularized versions.
\begin{Definition}\label{def:MOMTests}
Let $\alpha\in(0,1)$ and $K\in[N]$. For all functions $\cL :\cX\times\R\to\R$ the \textbf{$\alpha$-quantile on $K$ blocks of $\cL$} is $Q_{\alpha,K}(\cL)=Q_\alpha((P_{B_k}\cL)_{k\in[K]})$. In particular, the Median-of-Means (MOM) of $\cL$ on $K$ blocks is defined as $\MOM{K}{\cL} = Q_{1/2, K}(\cL)$. For all $f, g\in F$, the \textbf{MOM test on $K$ blocks of $g$ against $f$} is defined by
\begin{equation*}
T_{K}(g,f)=\MOM{K}{\ell_f-\ell_g}
\end{equation*}and, for a given regularization parameter $\lambda\geqslant0$, its regularized version is 
\begin{equation*}
T_{K,\lambda}(g,f)= \MOM{K}{\ell_f-\ell_g} + \lambda(\norm{f}-\norm{g})\enspace.
\end{equation*}
\end{Definition}


\subsection{Minimaximization of MOM tests}
\label{sec:RhoAgg}
We are now in position to define our basic estimators. These are simply obtained by replacing the unknown expectations $P(\ell_{f}-\ell_g)$ in \eqref{eq:LearningFromTests} by the (regularized version) of the MOM tests on $K$ blocks of $g$ against $f$,
\begin{Definition}\label{def:estimators}
For any $K\in[N/2]$, let
\begin{equation}\label{eq:minmax_esti}
\hat f_K \in\argmin_{f\in F}\max_{g\in F} T_K(g, f) \mbox{ and }
\hat f_{K, \lambda} \in\argmin_{f\in F}\max_{g\in F} T_{K, \lambda} (g, f).
\end{equation}
\end{Definition}
\noindent
One can rewrite our estimators as cost minimization estimators
\begin{equation*}
\hat f_K \in\argmin_{f\in F} \Crit{K}{f} \mbox{ and } \hat f_{K,\lambda}\in\argmin_{f\in F}\Crit{K,\lambda}{f}
\end{equation*}where for all $f\in F,$ $\Crit{K}{f}=\sup_{g\in F}T_{K}(g,f)$ and $\Crit{K,\lambda}{f}=\sup_{g\in F}T_{K,\lambda}(g,f)$. These cost functions play a central role in the theoretical analysis of $\hat f_K$ and $\hat f_{K, \lambda}$ (cf. Section~\ref{sec:proofs}).

Compared to the aggregation of tests by ``tournaments" \cite{LugosiMendelson2016, LugosiMendelson2017}, or ``Le Cam type" aggregation of tests \cite{MOM1}, minimaximization estimators have several avantages. First they are very natural from Eq~\eqref{eq:LearningFromTests} and they do not require a proxy for the excess risk or for the $L^2(P)$-norm, which simplifies  the presentation of this estimator. Finally, while building the estimators of \cite{LugosiMendelson2016, LugosiMendelson2017} or \cite{MOM1} is totally impossible, these new estimators are minimaximization procedures of locally convex-concave functions that could be approximated by a numerical scheme. It is actually easy to ``convert'' most of the classical algorithms used for ``cost minimization'' into an algorithm for minimaximization solving \eqref{eq:minmax_esti} (see Section~\ref{sec:simulation_study} below).

\begin{Remark}[$K=1$ and ERM] If one chooses $K=1$ then for all $f, g\in F$, $T_K(g,f) = P_N (\ell_f-\ell_g)$ and it is straightforward to check that $\hat f_K$ and $\hat f_{K, \lambda}$ are respectively the Empirical risk Minimization (ERM) and its regularized version (RERM). 
It turns out that, when we choose $K$ on a data-driven basis as we do in Section~\ref{sub:adaptive_choice_of_the_number_k_of_blocks_via_mom_cv}, when the proportion of outliers is small and good data have light tails, then the selected number of blocks $\hat K$ is equal or close to $1$ (cf. Figure~\ref{fig:MOM_CV_K} below). Therefore, for clean datasets where ERM performs optimally, our procedure matches this optimal estimator. 
\end{Remark}

\section{Assumptions and main results}\label{sec:AssMR}
Denote by $\cO\cup\cI$ a partition of $[N]$, where $\cO$ has cardinality $|\cO|$. Data $(X_i,Y_i)_{i\in \cO}$ are considered as \emph{outliers}, no assumptions on the joint distribution of these data or on their distribution conditionally on data $(X_i,Y_i)_{i\in \cI}$ is made. These data may not be independent, nor independent from the remaining data. The remaining set $(X_i,Y_i)_{i\in \cI}$ is the set of \emph{informative} data, that is the ones one can use for estimation. These are hereafter assumed \emph{independent}. Given the data $(X_i, Y_i)_{i\in[N]}$ no one knows in advance which data is informative or not. 


\subsection{Assumptions}\label{sec:Ass}
All the assumptions we need to get the results involve only first and second moment of the distributions $P,(P_i)_{i\in\cI}$ on functions in $F$ and $Y$. 
In particular, this setting strongly relaxes usual strong concentration assumptions made on the informative data to study ERM estimators. 

\begin{Assumption}\label{ass:Mom2F}
There exists $\param{r0}>0$ such that for all $f\in F$ and all $i\in \cI$,
\begin{equation*}
 \sqrt{P_i(f-\bayes)^2}\leqslant \param{r0}\sqrt{P(f-\bayes)^2}.
\end{equation*}
\end{Assumption}
Of course, Assumption~\ref{ass:Mom2F} holds in the i.i.d. framework, with $\param{r0}=1$ and $\cI=[N]$.
The second assumption bounds the correlation between the ``noise'' $\zeta_i= Y_i-\bayes(X_i)$ and  the shifted class $F-\bayes$.

\begin{Assumption}\label{ass:margin}
There exists $\param{m}>0$ such that for all   $i\in \cI$ and all $f\in F$,
\[
 \text{var}(\zeta_i(f-\bayes)(X_i))\leqslant \param{m}^2\norm{f-\bayes}^2_{L^2_P}\enspace.
\]
\end{Assumption}
Assumption~\ref{ass:margin} typically holds in the i.i.d. setup when the noise $\zeta=Y-f^*(X)$ has uniformly bounded $L^2$-moments conditionally to $X$, which holds in the classical framework when $\zeta$ is independent of $X$ and $\zeta$ has a finite $L^2$-moment bounded by $\param{m}$. In non-i.i.d. setups, assumption~\ref{ass:margin} also holds if for all $i\in \cI$, $\norm{\zeta}_{L^4_{P_i}}\leqslant \param{2}<\infty$ -- where $\zeta(x, y) =y-f^*(x)$ for all $x\in\cX$ and $y\in\R$ -- and, for every $f\in F$, $\norm{f-\bayes}_{L^4_{P_i}}\leqslant \theta_1 \norm{f-\bayes}_{L^2_P}$, because, in that case, 
\begin{equation*}
\sqrt{{\rm var}_{P_i}(\zeta(f-\bayes))} \leqslant \norm{\zeta(f-\bayes)}_{L^2_{P_i}}\leqslant \norm{\zeta}_{L^4_{P_i}}\norm{f-\bayes}_{L^4_{P_i}}\leqslant \theta_1\param{2}\norm{f-\bayes}_{L^2_P},
\end{equation*}and so Assumption~\ref{ass:margin} holds for $\param{m} = \theta_1 \param{2}$.

Now, let us introduce a norm equivalence assumption over $F-f^*$: we call it a $L^2/L^1$ assumption.

\begin{Assumption}\label{ass:small-ball}
There exists $\theta_0\geqslant 1$ such that for all $f\in F$ and all $i\in \cI$
\begin{equation*}
\norm{f-f^*}_{L^2_P}\leqslant \theta_0\norm{f-f^*}_{L^1_{P_i}}\enspace.
\end{equation*}
\end{Assumption}
 Note that $\norm{f-f^*}_{L^1_{P_i}}\leqslant \norm{f-f^*}_{L^2_{P_i}}$ for all $f\in F$ and $i\in\cI$. Therefore, Assumption~\ref{ass:Mom2F} and Assumption~\ref{ass:small-ball} are together equivalent to assume that all the norms $L^2_P, L_{P_i}^2, L_{P_i}^1, i\in\cI$ are equivalent over $F-f^*$. Note also that Assumption~\ref{ass:small-ball} is equivalent to the small ball property (cf. \cite{MR3431642,Shahar-COLT}) which has been recently used in Learning theory and signal processing. We refer to \cite{Shahar-Vladimir,LM_compressed,shahar_general_loss,MR3364699,Shahar-ACM,RV_small_ball} for examples of distributions satisfying the small ball assumption.  

\begin{Proposition}\label{prop:SBP=L2/L1} Let $Z$ be a real-valued random variable. The following holds:
\begin{enumerate}
	\item If there exists $\kappa_0$ and $u_0$ such that $\bP(|Z|>\kappa_0\norm{Z}_2)\geqslant u_0$ then $\norm{Z}_2\leqslant (u_0\kappa_0)^{-1}\norm{Z}_1$;
	\item if there exists $\param{0}$ such that $\norm{Z}_2\leqslant \param{0}\norm{Z}_1$, then for any $\kappa_0<\param{0}^{-1}$, $\bP(|Z|>\kappa_0\norm{Z}_2)\geqslant u_0$ where $u_0=(\param{0}^{-1}-\kappa_0)^2$.
\end{enumerate} 
\end{Proposition}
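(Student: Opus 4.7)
The two implications are independent, and each admits a short standard proof; I would carry them out separately.

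For statement (1), I would use a one-line truncation (Markov-type) argument. Splitting the expectation $\norm{Z}_1 = \E|Z|$ according to whether $|Z| > \kappa_0 \norm{Z}_2$ and discarding the contribution on $\{|Z|\leqslant \kappa_0\norm{Z}_2\}$, I get
\[
\norm{Z}_1 \geqslant \E\bigl[|Z|\,\mathbf{1}_{\{|Z|>\kappa_0\norm{Z}_2\}}\bigr] \geqslant \kappa_0\norm{Z}_2\,\bP\bigl(|Z|>\kappa_0\norm{Z}_2\bigr) \geqslant \kappa_0 u_0\norm{Z}_2,
\]
which rearranges into $\norm{Z}_2\leqslant (\kappa_0 u_0)^{-1}\norm{Z}_1$. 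There is no difficulty here beyond noticing that the assumption gives the lower bound on the indicator's probability.

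For statement (2), the natural tool is the Paley--Zygmund inequality applied to the non-negative variable $Y := |Z|$: for any $\lambda\in[0,1)$,
\[
\bP(Y > \lambda \E Y) \geqslant (1-\lambda)^2\frac{(\E Y)^2}{\E Y^2},
\]
which itself follows by splitting $\E Y = \E[Y\mathbf{1}_{\{Y\leqslant\lambda\E Y\}}]+\E[Y\mathbf{1}_{\{Y>\lambda\E Y\}}]$ and applying Cauchy--Schwarz to the second term. I would then choose $\lambda = \kappa_0\norm{Z}_2/\E|Z|$ so that the event $\{Y>\lambda\E Y\}$ coincides with $\{|Z|>\kappa_0\norm{Z}_2\}$. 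Setting $r := \E|Z|/\norm{Z}_2$, the hypothesis $\norm{Z}_2\leqslant\theta_0\norm{Z}_1$ gives $r\geqslant\theta_0^{-1}$, hence $\lambda = \kappa_0/r \leqslant \kappa_0\theta_0 < 1$, and Paley--Zygmund yields
\[
\bP\bigl(|Z|>\kappa_0\norm{Z}_2\bigr) \geqslant \Bigl(1-\tfrac{\kappa_0}{r}\Bigr)^{2} r^{2} = (r-\kappa_0)^2 \geqslant (\theta_0^{-1}-\kappa_0)^2,
\]
using in the last step that $r\geqslant \theta_0^{-1}>\kappa_0$, so both sides are non-negative and $x\mapsto x^2$ is increasing.

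The only point requiring any attention is checking that the chosen $\lambda$ lies in $[0,1)$, but this is exactly what the assumption $\kappa_0<\theta_0^{-1}$ is designed to ensure. Apart from identifying Paley--Zygmund as the right tool, I expect no real obstacle: the proposition is a clean equivalence between a small-ball-type lower bound on $|Z|$ and the $L^2/L^1$ norm comparison.
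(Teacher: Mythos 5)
Your proposal is correct and follows essentially the same route as the paper: part (1) is the same truncation bound, and part (2) is the Paley--Zygmund argument, which the paper carries out inline by the very split-plus-Cauchy--Schwarz computation you cite as the proof of the packaged inequality. The only cosmetic difference is that you invoke the standard form of Paley--Zygmund with a chosen threshold $\lambda$, whereas the paper splits $\E|Z|$ at $\kappa_0\norm{Z}_2$ directly and solves for $\sqrt{p}$; the content is identical.
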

\begin{proof}
Suppose that there exists $\kappa_0$ and $u_0$ such that $\bP(|Z|>\kappa_0\norm{Z}_2)\geqslant u_0$ then 
 \begin{equation*}
  \norm{Z}_{1}\geqslant \int_{|z|\geqslant \kappa_0\norm{Z}_{2}}|z|dP_Z(z)\geqslant u_0\kappa_0\norm{Z}_{2}\enspace,
 \end{equation*}where $P_Z$ denotes the probability distribution associated with $Z$. Conversely, assume that $\norm{Z}_2\leqslant \theta_0 \norm{Z}_2$. Let $p=\bP\left(|Z| \geqslant \kappa_0 \norm{Z}_{2}\right)$. It follows from Paley-Zigmund's argument \cite[Proposition~3.3.1]{MR1666908} that 
\begin{align*}
\norm{Z}_{2}&\leqslant \param{0}\norm{Z}_{1}\leqslant \param{0}\pa{\E[|Z|I(|Z| \leqslant \kappa_0 \norm{Z}_{2})] + \E[|Z|I(|Z| \geqslant \kappa_0 \norm{Z}_{2}}\\
&\leqslant \param{0}\norm{Z}_{2}\pa{\kappa_0 + \sqrt{p}}\enspace,
\end{align*}
therefore, $p\geqslant (\param{0}^{-1}-\kappa_0)^2$.
 \end{proof}

%
%

\subsection{Complexity measures and minimax rates of convergence} 
\label{sub:complexity_measures_and_minimax_rates_of_convergence}
Balls associated with the regularization norm $\norm{\cdot}$ and the $L^2_P$ norm play a prominent role in learning theory \cite{LM13,LM_reg1}. In particular, for all $\rho\geqslant0$,  the ``sub-models''
\begin{equation*}
B(f^*, \rho)=\{f\in F: \norm{f-f^*}\leqslant \rho\} = f^*+\rho B
\end{equation*}where $B = \{f\in{\rm span}(F), \norm{f}\leqslant \rho\}$ and their ``localizations'' at various level $r\geqslant0$,  i.e. intersection of $B(f^*, \rho)$ with $L^2_P$-balls
\begin{equation*}
B_2(f^*, r)=\{f\in F: \norm{f-f^*}_{L^2_P}\leqslant r\}
\end{equation*}
are key sets because their Rademacher complexities, drives the minimax rates of convergence.
Let us introduce these complexity measures.

\begin{Definition}\label{def:the-three-parameters-Reg}
Let $(\eps_i)_{i\in [N]}$ be independent Rademacher random variables (i.e. uniformly distributed in $\{-1, 1\}$), independent from $(X_i,Y_i)_{i=1}^N$. For all $f\in F$,  $r>0$ and $\rho\in(0,+\infty]$, we denote the intersection of the $\norm{\cdot}$-ball of radius $r$ and the $L^2_P$-norm of radius $\rho$ centered at $f$ by
\begin{equation*}
B_{\text{reg}}(f,\rho,r)= B(f, \rho)\cap B_2(f, r) = \left\{g\in F : \norm{g-f}_{L^2_P}\leqslant r,\;\norm{g-f}\leqslant \rho\right\} \enspace.
\end{equation*}
Let $\zeta_i=Y_i-f^*(X_i)$ for all $i\in\cI$ and for $\gamma_Q,\gamma_M>0$ define
\begin{align*}
r_Q(\rho,\gamma_Q)& = \inf\left\{r>0 : \forall J\subset \cI, |J|\geqslant \frac{N}2,\;\E \sup_{f \in B_{\text{reg}}(f^*,\rho,r)} \left|\sum_{i\in J} \eps_i (f-\bayes)(X_i)\right| \leqslant  \gamma_Q|J| r \right\}\enspace,\\
r_M(\rho,\gamma_M)&=  \inf\left\{r>0 : \forall J\subset \cI, |J|\geqslant \frac{N}2,\;\E \sup_{f \in B_{\text{reg}}(f^*,\rho,r)} \left|\sum_{i\in J} \eps_{i} \zeta_i (f-\bayes)(X_i)\right| \leqslant \gamma_M |J| r^2 \right\}\enspace,
\end{align*}
and let $\rho\to r(\rho,\gamma_Q,\gamma_M)$ be a continuous and non decreasing function such that for every $\rho>0$,
\begin{equation*}
r(\rho) = r(\rho,\gamma_Q,\gamma_M)\geqslant \max\{r_Q(\rho,\gamma_Q),r_M(\rho,\gamma_M)\}.
\end{equation*}
\end{Definition}It follows from Lemma~2.3 in \cite{LM13} that $r_M$ and $r_Q$ are continuous and non decreasing functions. Note that $r_M(\cdot), r_Q(\cdot)$ depend on $f^*$. According to \cite{LM13}, if one can choose $r(\rho)$  equal to the maximum of $r_M(\rho)$ and $r_Q(\rho)$ then $r(\rho)$ is the minimax rate of convergence over $B(f^*, \rho)$. 
Note also that $r_Q$ and $r_M$ are well defined when $\cI \geqslant N/2$, which implies that at least half data are informative. 

\subsection{The sparsity equation} 
\label{sub:the_sparsity_equation}
To control the risk of our estimator, we bound from above $T_{K,\lambda}(f, \bayes)$ for all functions $f$ far from $\bayes$ either in $L^2_P$-norm or for the regularization norm $\norm{\cdot}$. Recall that
\[
T_{K,\lambda}(f,\bayes)=\text{MOM}_K[2\zeta(f-\bayes)-(f-\bayes)^2]+\lambda(\norm{f^*}-\norm{f})\enspace.
\]
The multiplier term ``$2\zeta(f-\bayes)$'' is the one containing the noise and is therefore the term we will try to control from above using either the quadratic process ``$(f-\bayes)^2$'' or the regularization term ``$\lambda(\norm{f^*}-\norm{f})$''. To that end we will need both to control from below the quadratic process 
and the regularization term. 

Let $f\in F$ and denote $\rho=\norm{f-\bayes}$. When $\|f-\bayes\|_{L^2_P}$ is small, the quadratic term $(f-\bayes)^2$ will not help to bound from above $T_{K,\lambda}(f,\bayes)$, one shall only rely on the penalization term $\lambda(\norm{f^*}-\norm{f})$. One can bound from below $\norm{f^*}-\norm{f}\gtrsim \rho$ for all $f$ close to $\bayes$ in $L_P^2$ using the \emph{sparsity equation} of \cite{LM_reg_comp}. 
First, introduce the subdifferentials of $\norm{\cdot}$ : for all $f\in F$,
\begin{equation*}
(\partial\norm{\cdot})_f = \{z^*\in E^*: \norm{f+h}\geqslant \norm{f}+z^*(h) \mbox{ for every } h\in E  \}
\end{equation*} where $(E^*, \norm{\cdot}^*)$ is the dual normed space of $(E,\norm{\cdot})$.

For any $\rho>0$, let $H_\rho$ denote the set of functions ``close" to $\bayes$ in $L_P^2$ and at distance $\rho$ from $\bayes$ in regularization norm and let $\Gamma_{\bayes}(\rho)$ denote the set of subdifferentials of all vectors close to $\bayes$:
$$
H_{\rho} = \{f \in F : \norm{f-f^*}=  \rho \ {\rm and} \ \|f-f^*\|_{L^2_P} \leqslant r(\rho)\}
\an
\Gamma_{f^*}(\rho)=\bigcup_{f\in F : \norm{f-f^*} \leqslant \rho/20}(\partial \norm{\cdot})_f\enspace.
$$ 
If there exists a ``sparse'' $f^{**}$ in $\{f\in F:\norm{f^*-f}\leqslant \rho/20\}$, that is $(\partial\norm{\cdot})_{f^{**}}$ is almost all the unit dual sphere, then $\norm{f}-\norm{f^{**}}$ is large for any $f\in H_{\rho}$ so  $\norm{f}-\norm{\bayes}\geqslant \norm{f}-\norm{f^{**}}-\norm{\bayes-f^{**}}$ is large as well. More precisely, let us introduce, for all $\rho>0$,
\begin{equation*}
\Delta(\rho) = \inf_{f \in H_{\rho}} \sup_{z^* \in \Gamma_{\bayes}(\rho)} z^*(f-\bayes)\enspace.
\end{equation*}
%
The sparsity equation, introduced in \cite{LM_reg_comp}, quantifies these notions of ``large".

\begin{Definition}\label{def:sparsity_equation}
A radius $\rho>0$ is said to satisfy the \textbf{sparsity equation} when $\Delta(\rho) \geqslant 4\rho/5$.
\end{Definition}

One can check that, if $\rho^*$ satisfies the sparsity equation, so do all $ \rho\geqslant \rho^*$. Therefore, one can define
\begin{equation*}
\rho^* = \inf\left(\rho>0: \Delta(\rho) \geqslant \frac{4\rho}{5}\right).
\end{equation*}Note that if $\rho\geqslant 20 \norm{f^*}$ then $0\in \Gamma_{f^*}(\rho)$. Moreover, $(\partial\norm{\cdot})_0$ equals to the dual ball (i.e. the unit ball of $(E^*, \norm{\cdot}^*)$) and so $\Delta(\rho) = \rho$. This implies that any $\rho\geqslant 20 \norm{f^*}$ satisfies the sparsity equation. This simple observation will be used to get ``complexity-dependent rates of convergence'' as in \cite{LM_reg2}.

\subsection{Main results}\label{sec:main_results}
Our first results study the performance of the estimators $\ERM{K,\lambda}$ for a fixed value of $K$. The other ones will provide an adaptive way to select $K$.

%

\begin{Theorem}\label{thm:RBRhoEstPen}
Grant Assumptions \ref{ass:Mom2F},~\ref{ass:margin} and \ref{ass:small-ball} and let $r_Q$, $r_M$ denote the functions introduced in Definition~\ref{def:the-three-parameters-Reg}. Assume that $N\geqslant 384 (\theta_0\theta_{r0})^2$ and $|\cO|\leqslant N/(768\theta_0^2)$. Let $\rho^*$ be solution to the sparsity equation from Definition~\ref{def:sparsity_equation}. Let $K^*$ denote the smallest integer such that 
\[
K^*\geqslant   \frac{N\eps^2}{384 \theta_m^2} r^2(\rho^*)\enspace,
\] where $\eps = 1/(833\theta_0^2)$ and $r^2(\cdot)$ is defined in Definition~\ref{def:the-three-parameters-Reg} for $\gamma_Q=(384\theta_0)^{-1}$ and $\gamma_M = \eps/192$. For any $K\geqslant K^*$, define the radius $\rho_K$ and the regularization parameter as
\begin{equation*}
r^2(\rho_K) = \frac{384 \theta_m^2}{\eps^2}\frac{K}{N} \mbox{ and }\lambda=\frac{16 \eps r^2(\rho_K)}{\rho_K}.
\end{equation*}

Assume that for every $i\in\cI$, $K\in[\max(K^*, |\cO|), N]$ and $f\in F$ such that $\norm{f-f^*}\leqslant \rho$ for $\rho\in[\rho_K, 2 \rho_K]$, one has 
\begin{equation}\label{eq:robust_theo_basic}
\left|P_i\zeta(f-f^*)- P \zeta(f-f^*)\right|\leqslant  \eps\max\left(r_M^2(\rho, \gamma_M), \frac{384 \theta_m^2}{\eps^2}\frac{K}{N}, \norm{f-f^*}_{L_p^2}^2\right) \enspace.
\end{equation}
Then, for all $K\in[\max(K^*, 8 |\cO|), N/(96 (\theta_0\theta_{r0})^2)]$,  with probability larger than  $1-4\exp(-7K/9216)$, 
the estimator $\hf_{K,\lambda}$ defined in Section~\ref{sec:RhoAgg} satisfies
\begin{gather*}
\norm{\ERM{K,\lambda}-\bayes}\leqslant 2\rho_K, \quad \norm{\ERM{K,\lambda}-\bayes}_{L^2_P}\leqslant r(2\rho_K)\\
 R(\ERM{K,\lambda})\leqslant R(\bayes)+(1+52\eps)r^2(2\rho_K)\enspace. 
\end{gather*}
\end{Theorem}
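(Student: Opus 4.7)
The plan is to exploit the characterization $\hat f_{K,\lambda}\in\argmin_{f\in F}\Crit{K,\lambda}{f}$ and establish the conclusions by a contradiction argument on the cost function. Specifically, I would prove on a single high-probability event $\Omega$ that (i) $\Crit{K,\lambda}{f^*}\leqslant C\epsilon\, r^2(2\rho_K)$ for a small absolute $C$; and (ii) $\Crit{K,\lambda}{f}>\Crit{K,\lambda}{f^*}$ for any $f\in F$ violating either $\norm{f-f^*}\leqslant 2\rho_K$ or $\norm{f-f^*}_{L^2_P}\leqslant r(2\rho_K)$. Minimality of $\hat f_{K,\lambda}$ then forces both norm bounds, and testing $g=f^*$ in the supremum defining $\Crit{K,\lambda}{\hat f_{K,\lambda}}$ combined with (i) produces the excess-risk bound.

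The construction of $\Omega$ is the first step. For every $f$ with $\norm{f-f^*}\leqslant 2\rho_K$, declare a block $B_k$ \emph{multiplier-good} if $|(P_{B_k}-P)\zeta(f-f^*)|\leqslant\epsilon\max(r_M^2(2\rho_K),\norm{f-f^*}_{L^2_P}^2)$ and \emph{quadratically-good} if $P_{B_k}(f-f^*)^2\geqslant c_0\norm{f-f^*}_{L^2_P}^2$ for an absolute $c_0>0$. For a single $f$, each block is multiplier-good with probability $\geqslant 7/8$ by Chebyshev combined with Assumption~\ref{ass:margin} and the calibration $N/K\gtrsim \theta_m^2/(\epsilon^2 r^2)$, and quadratically-good with probability $\geqslant 7/8$ because Assumption~\ref{ass:small-ball} together with Proposition~\ref{prop:SBP=L2/L1} yields a small-ball lower bound on $\bP_i(|f-f^*|\geqslant\kappa_0\norm{f-f^*}_{L^2_P})$ uniformly in $i\in\cI$, which Chebyshev transfers to block averages. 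Uniformity over the whole ball is obtained by symmetrization: the supremum on each subset $J\subset\cI$ of size $\geqslant N/2$ is controlled by the Rademacher complexities entering the definitions of $r_Q(\rho,\gamma_Q)$ and $r_M(\rho,\gamma_M)$, while hypothesis~\eqref{eq:robust_theo_basic} is used to pass from the $P$-centered process to the $P_i$-centered process. A Bernstein bound on the Bernoulli count of bad blocks, together with the margin $7/8-1/8$ and $|\cO|\leqslant K/8$, then yields $\bP(\Omega)\geqslant 1-4\exp(-7K/9216)$.

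Working on $\Omega$, the bound (i) is proved by examining $T_{K,\lambda}(g,f^*)$ for $g\in F$ and splitting on $\norm{g-f^*}$. If $\norm{g-f^*}\leqslant\rho_K$, then $g\in B_{\text{reg}}(f^*,\rho_K,r(\rho_K))$ and a majority of blocks are simultaneously multiplier- and quadratically-good, which bounds $T_{K,\lambda}(g,f^*)\lesssim\epsilon r^2(\rho_K)$. If $\norm{g-f^*}>\rho_K$, convexity of $F$ lets us rescale $g$ to a point in $H_{\rho_K}$, and the sparsity equation $\Delta(\rho_K)\geqslant 4\rho_K/5$ then yields $\norm{g}-\norm{f^*}\gtrsim\norm{g-f^*}$; with the calibration $\lambda=16\epsilon r^2(\rho_K)/\rho_K$, the penalization $\lambda(\norm{f^*}-\norm{g})$ dominates the multiplier contribution. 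For (ii), test with $g=f^*$: when $\norm{f-f^*}\geqslant 2\rho_K$, a rescaling to the $2\rho_K$-sphere combined with the quadratic lower bound on $\geqslant K/2$ good blocks and the sparsity-equation penalization gain strictly exceeds $\Crit{K,\lambda}{f^*}$; when $\norm{f-f^*}_{L^2_P}>r(2\rho_K)$ with $\norm{f-f^*}\leqslant 2\rho_K$, the quadratic lower bound alone supplies an excess of order $r^2(2\rho_K)$.

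Given the norm bounds, the excess-risk inequality follows from $T_K(f^*,\hat f_{K,\lambda})\leqslant \Crit{K,\lambda}{\hat f_{K,\lambda}}-\lambda(\norm{\hat f_{K,\lambda}}-\norm{f^*})\leqslant \Crit{K,\lambda}{f^*}+2\lambda\rho_K$, and on $\Omega$ the MOM of $\ell_{\hat f_{K,\lambda}}-\ell_{f^*}$ is at least $R(\hat f_{K,\lambda})-R(f^*)$ minus an $\epsilon r^2(2\rho_K)$ multiplier deviation and a quadratic correction bounded by $r^2(2\rho_K)$ via Assumption~\ref{ass:Mom2F}; adding $2\lambda\rho_K=32\epsilon r^2(\rho_K)$ and the initial $\Crit{K,\lambda}{f^*}$ bound produces the stated $(1+52\epsilon)$ coefficient. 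The main obstacle is the combinatorial bookkeeping: after discarding the $|\cO|\leqslant K/8$ outlier blocks and the at-most-$K/8$ blocks failing either goodness property, a strict majority of blocks must remain simultaneously multiplier-good and quadratically-good for every $f$ in the localized ball, so that each empirical $1/2$-quantile lands in a good block, while tracking all absolute constants through the sparsity-equation step so that the stated numerical values of $\epsilon$ and the factor $1+52\epsilon$ emerge exactly.
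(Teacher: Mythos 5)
Your proposal is correct and follows essentially the same route as the paper: the same good-block control of the quadratic process (via the $L^2/L^1$ small-ball bound, Chebyshev, symmetrization--contraction and concentration of the bad-block count) and of the multiplier process, the same use of hypothesis~\eqref{eq:robust_theo_basic} to recenter by $P$, the same homogeneity rescaling plus sparsity equation to handle $\norm{g-f^*}>\rho_K$, the same exclusion of $\hat f_{K,\lambda}$ from the bad regions by comparing the test against $f^*$ with the bound on $\Crit{K,\lambda}{f^*}$, and the same bookkeeping yielding $1+52\eps$. The only point to tighten in a full write-up is that after rescaling a far point to the sphere $\norm{f-f^*}=\rho_K$ you must split according to whether its $L^2_P$-norm is below $r(\rho_K)$ (sparsity equation applies) or above (the quadratic lower bound takes over), exactly as in the paper's treatment of $F_3^{(\kappa)}$.
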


\begin{Remark}[connexion between the $P_i, i\in\cI$ and $P$]\label{rem:equiv_L2_norm}
Assumption~\eqref{eq:robust_theo_basic} holds for instance when for every $i\in\cI$ and  $f\in F$ one has 
\begin{equation*}
\norm{Y_i - f^*(X_i)}_{L^2}\leqslant \norm{Y-f^*(X)}_{L^2}, \norm{Y_i - f(X_i)}_{L^2}\geqslant \norm{Y-f(X)}_{L^2}  
\end{equation*}and
\begin{equation*}
\norm{f^*(X_i) - f(X_i)}_{L^2}\leqslant \norm{f^*(X)-f(X)}_{L^2}.
\end{equation*}
These assumptions involve second moments associated with $P$ and $(P_i)_{i\in\cI}$. As a consequence, if the metrics $L^2_{P_i}$ for $i\in\cI$ and $L^2_P$ coincide on the functions $x\to (f-f^*)(x)$ and $(x, y)\to y-f(x)$ and the $P_i$ satisfies Assumption~\ref{ass:margin} and \ref{ass:small-ball} then Theorem~\ref{thm:RBRhoEstPen} holds. This means that we only need informative data to induce the same $L^2$ metric as $P$ to estimate the oracle $f^*$ even if we do not have any observation coming from $P$ itself. This setup relaxes the classical i.i.d. where all data are generated from $P$. In this setting, our estimators achieve the same results as the ERM would if data were \emph{all} i.i.d. with a noise $\zeta$ independent of $X$ and both $X$ and $\zeta$ had a Gaussian distribution (see Section~\ref{sec:minimax_optimality_of_}). 
\end{Remark}
The function $r(\cdot)$ is used to define the regularization parameter, so it cannot depend on $f^*$. When $r_M(\cdot), r_Q(\cdot)$ depend on $f^*$, $r$ should be a computable upper bound independent from $f^*$.

\subsubsection{Adaptive choice of $K$}
\label{sec:adap_lep_reg}
In Theorem~\ref{thm:RBRhoEstPen}, all rates depend on the choice of the tuning parameter $K$. The following construction inspired from Lepski's method provides an adaptive choice of this parameter. Let us first recall the definition of empirical criterion introduced in Section~\ref{sec:RhoAgg} and the associated confidence regions: for all $J\in[K]$, $\lambda>0$, $f\in F$ and absolute constant $\cabs{ad}>0$, 
\begin{equation*}
\cC_{J, \lambda}(f) = \sup_{g\in F} T_{J, \lambda}(g, f) \mbox{ and } \hat R_{J,\cabs{ad}}=\left\{f\in F : \Crit{J,\lambda}{f}\leqslant \frac{\cabs{ad}}{\param{0}^2}r^2(\rho_J)\right\}\enspace,
\end{equation*}where $T_{J, \lambda}(g, f) = \MOM{J}{\ell_f-\ell_g} + \lambda\left(\norm{f} - \norm{g}\right)$.
%
For all $J\in [\max(K^*, 8 |\cO|), N/(96 (\theta_0\theta_{r0})^2)]$ , let 
\begin{gather*}
\hat K_{\cabs{ad}}=\inf\left\{K\in [\max(K^*, 8 |\cO|), N/(96 (\theta_0\theta_{r0})^2)] : \cap_{J=K}^{N/(96 (\theta_0\theta_{r0})^2)}\hat R_{J,\cabs{ad}}\ne \emptyset\right\} \\
 \mbox{ and choose } \ERM{\cabs{ad}}\in \cap_{J=\hat K_{\cabs{ad}}}^{N/(96 (\theta_0\theta_{r0})^2)}\hat R_{J,\cabs{ad}}\enspace.
\end{gather*}
The following theorem shows the performance of these estimators.
\begin{Theorem}\label{thm:LepskiReg}
 Grant the assumptions of Theorem~\ref{thm:RBRhoEstPen} and assume moreover that and $|\cO|\leqslant N/(768\theta_0^2\theta_{r0}^2)$. For any $K\in [\max(K^*, 8 |\cO|), N/(96 (\theta_0\theta_{r0})^2)]$, with probability larger than 
 \[
 1-4\exp(-K/2304) = 1 - 4\exp\left(-\eps^2 N r^2(\rho_K)/884736\right)\enspace,
 \]
 one has
 
\begin{gather*}
 \norm{\ERM{\cabs{ad}}-\bayes}\leqslant 2\rho_K,\qquad  \norm{\ERM{\cabs{ad}}-\bayes}_{L^2_P}\leqslant r(2\rho_K)\enspace,\\
  R(\ERM{\cabs{ad}})\leqslant R(\bayes)+ (1+52\eps) r^2(2 \rho_K)\enspace,
\end{gather*}
 where $c_{ad}=18/833$ and $\eps=(833\theta_0^2)^{-1}$.  In particular, for $K=K^*$, we have $r(2\rho_{K^*}) = \max\left(r(2\rho^*), \sqrt{|\cO|/N}\right)$. Therefore,  if $r(2\rho^*)\leqslant c_1 r(\rho^*)$ holds for some absolute constant $c_1$, then the breakdown number of $\ERM{\cabs{ad}}$ is larger than $N r(\rho^*)^2$. 
\end{Theorem}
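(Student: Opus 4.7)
The plan is to carry out the classical Lepski-type selection argument on top of Theorem~\ref{thm:RBRhoEstPen}. The first step is to extract from that proof, for every index $J$ in the admissible range $[\max(K^\ast,8|\cO|),N/(96(\theta_0\theta_{r0})^2)]$, the following deterministic statement valid on an event $\Omega_J$ of probability at least $1-4\exp(-7J/9216)$: the cost at the oracle satisfies $\cC_{J,\lambda}(\bayes)\leqslant (\cabs{ad}/\theta_0^2)\, r^2(\rho_J)$ with $\cabs{ad}=18/833$, while every $f\in F$ such that $\cC_{J,\lambda}(f)\leqslant (\cabs{ad}/\theta_0^2)\, r^2(\rho_J)$ necessarily satisfies $\|f-\bayes\|\leqslant 2\rho_J$, $\|f-\bayes\|_{L^2_P}\leqslant r(2\rho_J)$ and $R(f)\leqslant R(\bayes)+(1+52\eps)r^2(2\rho_J)$. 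These are exactly the two facts proved inside Theorem~\ref{thm:RBRhoEstPen}, namely the concentration step at $\bayes$ and the lower bound on the criterion outside the twice-enlarged ball together with the excess-risk control inside it. Reformulated, on $\Omega_J$ one has $\bayes\in\hat R_{J,\cabs{ad}}$ and $\hat R_{J,\cabs{ad}}$ is entirely contained in the good ball around $\bayes$ with the advertised risk bound.

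The Lepski step is then essentially automatic. Fix $K$ in the admissible range and intersect the events $\Omega_J$ for $J\in[K,N/(96(\theta_0\theta_{r0})^2)]$; since the failure probabilities decay geometrically in $J$,
\[
\bP\Big(\bigcap_{J\geqslant K}\Omega_J\Big)\geqslant 1-\sum_{J\geqslant K}4\exp(-7J/9216)\geqslant 1-4\exp(-K/2304),
\]
the constant $1/2304$ absorbing the geometric sum. On this event $\bayes\in\hat R_{J,\cabs{ad}}$ for every $J\geqslant K$, so the intersection defining $\hat K_{\cabs{ad}}$ at level $K$ contains $\bayes$ and is therefore non-empty, which forces $\hat K_{\cabs{ad}}\leqslant K$. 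By construction of $\ERM{\cabs{ad}}$ this implies $\ERM{\cabs{ad}}\in\hat R_{K,\cabs{ad}}$, and applying the deterministic implication of the previous paragraph at the single level $K$ yields the three claimed bounds on $\|\ERM{\cabs{ad}}-\bayes\|$, $\|\ERM{\cabs{ad}}-\bayes\|_{L^2_P}$ and $R(\ERM{\cabs{ad}})$.

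For the specific choice $K=\max(K^\ast,8|\cO|)$, the defining equality $r^2(\rho_K)=384\theta_m^2 K/(\eps^2 N)$ together with the lower bound on $K^\ast$ gives $r^2(\rho_{K^\ast})$ of the same order as $\max\{r^2(\rho^\ast),|\cO|/N\}$, so $r(2\rho_{K^\ast})=\max(r(2\rho^\ast),\sqrt{|\cO|/N})$ up to absolute constants by monotonicity of $r$. Under the doubling assumption $r(2\rho^\ast)\leqslant c_1 r(\rho^\ast)$ explicitly stated in the theorem, the estimation rate then stays of order $r^2(\rho^\ast)$ as long as $|\cO|$ does not exceed a constant multiple of $N r^2(\rho^\ast)$, which is the claimed breakdown number for $\ERM{\cabs{ad}}$.

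The main obstacle is not the Lepski scheme itself --- once the right level sets are identified, the argument is purely set-theoretic --- but rather the uniform extraction of the deterministic two-sided bound on $\cC_{J,\lambda}$ from the proof of Theorem~\ref{thm:RBRhoEstPen}. One must re-read that proof and verify that the two key lemmas (controlling respectively the cost at $\bayes$ and the cost at functions lying outside the ball $\{\|f-\bayes\|\leqslant 2\rho_J\}\cap\{\|f-\bayes\|_{L^2_P}\leqslant r(2\rho_J)\}$) are simultaneously valid on the same event $\Omega_J$ and that their numerical constants combine to produce $\cabs{ad}=18/833$. Once this is in place, the union bound and the degradation of the exponent from $7/9216$ to $1/2304$ are routine.
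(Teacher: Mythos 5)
Your proposal is correct and follows essentially the same route as the paper: on the intersection of the events $\Omega(J)$, $J\geqslant K$, Lemma~\ref{lem:BoundSup} (with $\kappa=1$, giving $(2+c')\eps=\cabs{ad}/\theta_0^2$) places $\bayes$ in every $\hat R_{J,\cabs{ad}}$, forcing $\hat K_{\cabs{ad}}\leqslant K$ and hence $\ERM{\cabs{ad}}\in\hat R_{K,\cabs{ad}}$, after which Lemma~\ref{lem:Conclusion} yields the three bounds; the union bound absorbing the geometric tail into the exponent $1/2304$ and the identification $r^2(\rho_{K^*})\sim\max(r^2(\rho^*),|\cO|/N)$ are handled as in the paper.
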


\begin{Remark}[deviation parameter]
Note that $r(\cdot)$ is any continuous, non decreasing function such that for all $\rho>0$, $r(\rho)\geqslant \max\left(r_Q(\rho, \gamma_Q), r_M(\rho, \gamma_M)\right)$. In particular, if $r_*:\rho\to \max\left(r_Q(\rho, \gamma_Q), r_M(\rho, \gamma_M)\right)$ is itself a continuous function (it is clearly non decreasing) then for every $x>0$, $r(\rho) = \max\left(r_Q(\rho, \gamma_Q), r_M(\rho, \gamma_M)\right) + x/N$ is another non decreasing upper bound. Therefore, one can derive results similar to those in Theorem~\ref{thm:LepskiReg} but with an extra confidence parameter : for all $x>0$, with probability at least $1-4\exp(-c_0 Nr_*^2(\rho_{K^*}) + c_0x)$, 
\begin{gather*}
\norm{\ERM{\cabs{ad}}-\bayes}\leqslant 2\rho_K,\qquad  \norm{\ERM{\cabs{ad}}-\bayes}_{L^2_P}\leqslant r_*(2\rho_K)+\frac{x}{N}\enspace,\\
 R(\ERM{\cabs{ad}})\leqslant R(\bayes)+ (1+52\eps) \left(r^2(2 \rho_K)+\frac{x}{N}\right).
\end{gather*}
Note however that $\ERM{\cabs{ad}}$ depends on $x$ through the regularization parameter $\lambda=16\eps(r_*(\rho_K)+x/N)/\rho_K$. 
\end{Remark}

\section{Proofs}
\label{sec:proofs}
Recall the quadratic / multiplier decomposition of the difference of losses: for all $f, g\in F$, $x\in\cX$ and $y\in\R$, 
\begin{align}\label{eq:quad_multi_decomp}
\ell_f(x, y) - \ell_g(x, y) &= (y-f(x))^2 - (y-g(x))^2\notag\\
& = (f(x) - g(x))^2 + 2 (y-g(x))(g(x) - f(x)).
\end{align}
Upper and lower bounds on $T_K(\cdot, \cdot)$ follow from a study of ``quadratic'' and ``multiplier'' quantiles of means processes. 
As no assumption is granted on the outliers, any block of data containing  one or more of these outliers is ``lost'' from our perspective meaning that empirical means over these blocks cannot be controlled. Let $\cK$ denote the set of blocks which have not been corrupted by outliers: 
\begin{equation}
 \label{eq:def_cK}
 \cK = \left\{k\in[K]: B_k\subset \cI\right\}.
 \end{equation} 
 If $k\in\cK$, all data indexed by $B_k$ are informative. We will show that controls on the blocks indexed by $\cK$ are sufficient to insure statistical performance of MOM estimators.

\subsection{Bounding quadratic and multiplier processes}
The first result is a lower bound on the quantiles of means quadratic processes.
\begin{Lemma}\label{lem:UBQPReg}Grant Assumptions~\ref{ass:Mom2F} and~\ref{ass:small-ball}. Fix $\eta\in (0,1)$, $\rho\in(0,+\infty]$ and let $\alpha,\gamma,\gamma_Q, x$ be positive numbers such that $\gamma\left(1-\alpha-x-16 \gamma_Q\theta_0\right) \geqslant 1-\eta$.
Assume that $K\in[ |\cO|/(1-\gamma),N\alpha/(2\theta_0\param{r0})^2]$.
Then there exists an event $\Omega_Q(K)$ such that $\bP\left(\Omega_Q(K)\right)\geqslant 1-\exp\left(-K\gamma x^2/2\right)$ and, on $\Omega_Q(K)$:  for all  $f\in F$ such that $ \norm{f-\bayes}\leqslant \rho$, if $\norm{f-\bayes}_{L^2_P}\geqslant r_Q(\rho,\gamma_Q)$ then
\begin{equation*}
 \left|\left\{k\in [K] : P_{B_k}(f-f^*)^2 \geqslant (4\theta_0)^{-2}\norm{f-\bayes}^2_{L^2_P}\right\}  \right|\geqslant(1-\eta)K\enspace.
\end{equation*}
In particular, $Q_{\eta,K}((f-\bayes)^2)\geqslant (4\theta_0)^{-2}\norm{f-\bayes}^2_{L^2_P}$.
\end{Lemma}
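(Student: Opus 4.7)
The plan is to lower-bound, uniformly over the localized class $F_{\rho,r_Q}:=\{f\in F: \|f-\bayes\|\leq\rho,\ \|f-\bayes\|_{L^2_P}\geq r_Q(\rho,\gamma_Q)\}$, the number of ``good'' blocks $k\in[K]$ on which $P_{B_k}(f-\bayes)^2\geq (4\theta_0)^{-2}\|f-\bayes\|^2_{L^2_P}$. I will decompose the count of ``bad'' blocks into three deviation terms matching exactly the three corrections $\alpha$, $x$, $16\gamma_Q\theta_0$ in the hypothesis $\gamma(1-\alpha-x-16\gamma_Q\theta_0)\geq 1-\eta$.

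First I discard contaminated blocks. Since each outlier spoils at most one block, the set of outlier-free blocks $\cK:=\{k\in[K]: B_k\subset\cI\}$ satisfies $|\cK|\geq K-|\cO|\geq \gamma K$ under the hypothesis $K\geq|\cO|/(1-\gamma)$. It therefore suffices to prove that, on an event of probability at least $1-\exp(-K\gamma x^2/2)$, the number of $k\in\cK$ violating the quadratic lower bound is at most $\gamma K(\alpha+x+16\gamma_Q\theta_0)$ uniformly over $f\in F_{\rho,r_Q}$; subtracting from $|\cK|\geq \gamma K$ and invoking the hypothesis on $\gamma$ then produces the required $(1-\eta)K$ good blocks. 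The central reduction is by Cauchy--Schwarz: $(P_{B_k}|f-\bayes|)^2\leq P_{B_k}(f-\bayes)^2$, so setting $r=\|f-\bayes\|_{L^2_P}$ and $Z_k(f)=P_{B_k}|f-\bayes|$, it is enough to count $k\in\cK$ with $Z_k(f)<r/(4\theta_0)$.

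Assumption~\ref{ass:small-ball} gives $\bE_{P_i}|f-\bayes|\geq r/\theta_0$ for every $i\in\cI$, so the threshold $r/(4\theta_0)$ lies strictly below $\bE Z_k(f)$ with gap $3r/(4\theta_0)$. Assumption~\ref{ass:Mom2F} gives $\mathrm{Var}_{P_i}(|f-\bayes|)\leq \theta_{r0}^2 r^2$, whence $\mathrm{Var}(Z_k(f))\leq \theta_{r0}^2 r^2\,K/N$. Chebyshev's inequality combined with the hypothesis $K\leq N\alpha/(2\theta_0\theta_{r0})^2$ then bounds $\bP(Z_k(f)<r/(4\theta_0))$ by a constant multiple of $\alpha$, producing an expected number of bad blocks at most of order $\gamma K\alpha$ for each fixed $f$. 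Hoeffding's inequality applied to the conditionally independent indicators $(\mathbf{1}\{Z_k(f)<r/(4\theta_0)\})_{k\in\cK}\in[0,1]$ then controls the actual count around its expectation with slack $\gamma K x$ at exactly the probability $\exp(-K\gamma x^2/2)$ stated in the lemma.

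Uniformity over $f\in F_{\rho,r_Q}$ is what contributes the $16\gamma_Q\theta_0$-term. I would combine the standard symmetrization inequality with the Ledoux--Talagrand contraction inequality for the $1$-Lipschitz map $t\mapsto |t|$, which reduces the uniform control of $\sup_{f\in F_{\rho,r_Q}} |P_J|f-\bayes| - \bE [P_J|f-\bayes|]|$ to the Rademacher complexity of $\{f-\bayes : f\in B_{\mathrm{reg}}(f^*,\rho,r)\}$ for a subset $J\subset\cI$; this complexity is exactly what is controlled by $\gamma_Q|J|r$ in the definition of $r_Q(\rho,\gamma_Q)$, provided $|J|\geq N/2$. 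The main obstacle is that individual blocks have size $N/K$, which is too small to allow a Rademacher-type control on each $B_k$ separately. The standard fix is to \emph{aggregate} bad blocks into their union $J=\bigcup_{k\,\text{bad}}B_k\subset\cI$, on which $P_J|f-\bayes|<r/(4\theta_0)$ whenever every constituent block is bad, and to invoke the $r_Q$-bound on $J$ as soon as $|J|\geq N/2$ (which is available thanks to the outlier assumption $|\cO|\leq N/2$). A standard peeling argument over the level sets $\|f-\bayes\|_{L^2_P}=r\geq r_Q(\rho,\gamma_Q)$ then extends the control from one radius to the whole class, and tracking constants through the symmetrization/contraction chain and the gap $3r/(4\theta_0)$ we can afford produces precisely the coefficient $16\gamma_Q\theta_0$. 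The final statement $Q_{\eta,K}((f-\bayes)^2)\geq (4\theta_0)^{-2}\|f-\bayes\|^2_{L^2_P}$ is then just the reformulation in quantile notation of ``at least $(1-\eta)K$ blocks satisfy the quadratic lower bound''.
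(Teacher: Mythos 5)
Your overall architecture (discard contaminated blocks, reduce to counting blocks where $P_{B_k}|f-\bayes|$ falls below $r/(4\theta_0)$ via Cauchy--Schwarz, Chebyshev with the hypothesis $K\leqslant N\alpha/(2\theta_0\param{r0})^2$ to make the per-block failure probability of order $\alpha$) matches the paper's proof. But there is a genuine gap in the way you obtain uniformity over $f$. Hoeffding applied to the indicators $\mathbf{1}\{P_{B_k}|f-\bayes|<r/(4\theta_0)\}$ produces an event that depends on the fixed $f$; you cannot intersect these events over the (uncountable) localized class, and your substitute mechanism does not repair this. Aggregating the \emph{bad} blocks into their union $J$ and ``invoking the $r_Q$-bound on $J$ as soon as $|J|\geqslant N/2$'' is circular: the whole point of the lemma is that the bad blocks form a small fraction of $[K]$ (of order $(\alpha+x+16\gamma_Q\theta_0)\gamma K$), so their union has cardinality far below $N/2$ and the defining inequality of $r_Q(\rho,\gamma_Q)$, which is only stated for $J\subset\cI$ with $|J|\geqslant N/2$, simply does not apply to it. (The parenthetical appeal to the outlier assumption is also off: $|\cO|$ controls how many blocks are \emph{corrupted}, not how many fail the quadratic lower bound.) Conversely, if you apply the $r_Q$-bound to the union of all uncorrupted blocks, you control a single global average and lose exactly the per-block information needed to count bad blocks; and the ``standard peeling over level sets'' you invoke does not bridge this, nor is it needed.

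The missing ingredient is the bridge the paper builds between the empirical process and the block count: replace the block-level indicator by the $1$-Lipschitz surrogate $\phi(t)=(t-1)I(1\leqslant t\leqslant 2)+I(t\geqslant 2)$, note that the count $Z(f)$ dominates $\sum_{k\in\cK}\phi\bigl(4\theta_0 P_{B_k}|n_f|\bigr)$ with $n_f=(f-\bayes)/\norm{f-\bayes}_{L^2_P}$, apply the bounded difference inequality to the \emph{supremum over $f$} of the centered sum (this is where the slack $|\cK|x$ and the probability $\exp(-x^2|\cK|/2)\leqslant\exp(-\gamma K x^2/2)$ come from, uniformly in $f$), and then bound the expectation of that supremum by symmetrization over blocks, contraction for $\phi$, a second symmetrization step passing from block-level signs $\eps_k$ to data-level signs $\eps_i$, and contraction for $|\cdot|$, arriving at $\E\sup_f\bigl|\sum_{i\in\cup_{k\in\cK}B_k}\eps_i n_f(X_i)\bigr|$. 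This last quantity is controlled by $\gamma_Q|\cK|N/K$ through the definition of $r_Q$ applied to $J=\cup_{k\in\cK}B_k$ (which does have $|J|\geqslant N/2$), using the convexity of $F$ to see that $r_Q(\rho,\gamma_Q)n_f+f^*$ lies in $B_{\text{reg}}(f^*,\rho,r_Q(\rho,\gamma_Q))$ — this normalization is what replaces your peeling. Without the Lipschitz surrogate plus bounded differences applied to the supremum, your three correction terms $\alpha$, $x$, $16\gamma_Q\theta_0$ cannot be assembled into a bound that holds simultaneously for all $f$ in the class, so the proof as proposed does not go through.
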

\begin{proof}Define $F^*_\rho = B(f^*, \rho) = \{f\in F: \norm{f-f^*}\leqslant\rho\}$. For all $f\in F^*_\rho$, let $n_f=(f-\bayes)/\norm{f-\bayes}_{L^2_P}$ and note that for all $i\in\cI$,  $P_i|n_f|\geqslant \theta_0^{-1}$ by Assumption~\ref{ass:small-ball} and $P_i n_f^2\leqslant \param{r0}^2$ by Assumption~\ref{ass:Mom2F}. It follows from Markov's inequality that, for all $k\in \cK$ ($\cK$ is defined in \eqref{eq:def_cK}),
\[
\bP\left(|P_{B_k}|n_f|-\overline P_{B_k}|n_f||>\frac{\param{r0}}{\sqrt{\alpha|B_k|}}\right)\leqslant \alpha\enspace,
\]
where $\overline P_{B_k}|n_f| = |B_k|^{-1}\sum_{i\in B_k} \E |n_f(X_i)|\geqslant \theta_0^{-1}$ and therefore, 
\[
\bP\left(P_{B_k}|n_f|\geqslant \frac1{\theta_0}-\frac{\param{r0}}{\sqrt{\alpha|B_k|}}\right)\geqslant 1-\alpha\enspace.
\]
Since $K\leqslant N\alpha /(2\theta_0\param{r0})^2$, $|B_k| = N/K\geqslant (2\theta_0\param{r0})^2/\alpha$ and so
\begin{equation}\label{eq:quad-1}
\bP\left(2\theta_0 P_{B_k}|n_f|\geqslant 1\right)\geqslant 1-\alpha\enspace.
\end{equation}
Let $\phi$ be the function defined on $\R_+$ by $\phi(t)=(t-1)I(1\leqslant t\leqslant 2)+I(t\geqslant 2)$, and, for all $f\in F^*_\rho$ let $Z(f)=\sum_{k\in[K]}I(4\theta_0P_{B_k}|n_f|\geqslant 1)$. Since for all $x\in \R$, $I(x\geqslant 1)\geqslant \phi(x)$,
\begin{equation*}
Z(f)\geqslant \sum_{k\in\cK}\phi\left(4\theta_0P_{B_k}|n_f|\right)\enspace.
\end{equation*}
Now, for any $x\in \R_+$, $\phi(x)\geqslant I(x\geqslant 2)$, thus, according to \eqref{eq:quad-1},
\begin{align*}
\bE\left[\sum_{k\in \cK}\phi\left(4\theta_0P_{B_k}|n_f|\right)\right]\geqslant \sum_{k\in \cK}\bP\left(4\theta_0P_{B_k}|n_f|\geqslant 2\right)\geqslant |\cK|(1-\alpha)\enspace.
\end{align*}
Therefore,
\begin{align*}
 Z(f)\geqslant |\cK|(1-\alpha)+\sum_{k\in \cK}\left(\phi\left(4\theta_0P_{B_k}|n_f|\right)-\bE\left[\phi\left(4\theta_0P_{B_k}|n_f|\right)\right]\right)\enspace.
\end{align*}
Denote $\cF=\{f\in F : \norm{f-\bayes}\leqslant \rho,\; \norm{f-\bayes}_{L^2_P}\geqslant r_Q(\rho,\gamma_Q)\}$. By the bounded difference inequality (see, for instance \cite[Theorem~6.2]{BouLugMass13}), there exists an event $\Omega_Q(K)$ with probability larger than $1-\exp(- x^2|\cK|/2)$, on which, for all $f\in\cF$, 
\begin{equation*}
\sup_{f\in \cF}\left|\sum_{k\in \cK}\left(\phi\left(4\theta_0P_{B_k}|n_f|\right)-\bE\left[\phi\left(4\theta_0P_{B_k}|n_f|\right)\right]\right)\right|
\leqslant \E\sup_{f\in \cF}\left| \sum_{k\in \cK}\left(\phi\left(4\theta_0P_{B_k}|n_f|\right)-\bE\left[\phi\left(4\theta_0P_{B_k}|n_f|\right)\right]\right)\right|+|\cK|x\enspace.
\end{equation*}
By the symmetrization argument,
\begin{equation*}
 \E\sup_{f\in \cF  } \left|\sum_{k\in \cK}\left(\phi\left(4\theta_0P_{B_k}|n_f|\right)-\bE\left[\phi\left(4\theta_0P_{B_k}|n_f|\right)\right]\right)\right|
 \le2\E\sup_{f\in \cF} \left|\sum_{k\in \cK}\epsilon_k\phi\left(4\theta_0P_{B_k}|n_f|\right)\right| \enspace.
\end{equation*}
Since the function $\phi$ is 1-Lipschitz and $\phi(0)=0$, by the contraction principle (see, for example \cite[Chapter~4]{MR1849347} or \cite[Theorem 11.6]{BouLugMass13}), we have
\begin{equation*}
\E\sup_{f\in \cF
 } \left|\sum_{k\in \cK}\epsilon_k\phi\left(4\theta_0P_{B_k}|n_f|\right)\right| \leqslant 4\theta_0\E\sup_{f\in \cF
  } \left|\sum_{k\in \cK}\epsilon_kP_{B_k}|n_f|\right|. 
\end{equation*}The family $(\eps_{[i]}|n_f(X_i)|:i\in\cup_{k\in\cK}B_k)$,  where $[i]=\lceil i/K\rceil$ for all $i\in \cI$, is a collection of centered random variables. Therefore, if $(\eps^\prime_{k})_{k\in\cK}$ and $(X_i^\prime)_{i\in\cI}$ denote independent copies of  $(\eps_{k})_{k\in\cK}$ and $(X_i)_{i\in\cI}$ then 
\begin{align*}
\E\sup_{f\in \cF
  } \left|\sum_{k\in \cK}\epsilon_kP_{B_k}|n_f|\right| \leqslant \E\sup_{f\in \cF
  } \left|\sum_{k\in \cK}\frac{1}{|B_k|}\sum_{i\in B_k}\epsilon_k |n_f(X_i)| - \epsilon_k^\prime |n_f(X_i^\prime)|\right|.
\end{align*}Then, as $(X_i)_{i\in\cI}$ and $(X_i^\prime)_{i\in\cI}$ are two independent families of independent variables therefore, if $(\eps_i^{\prime\prime})_{i\in\cI}$ denote a family of i.i.d. Rademacher variables independent of $(\eps_i), (\eps_i^{\prime}), (X_i)_{i\in\cI}, (X_i^\prime)_{i\in\cI}$ then $(\epsilon_k |n_f(X_i)| - \epsilon_k^\prime |n_f(X_i^\prime)|)$ and $\left(\eps_i^{\prime\prime}\left(\epsilon_k |n_f(X_i)| - \epsilon_k^\prime |n_f(X_i^\prime)|\right)\right)$ have the same distribution. Therefore, 
\begin{align*}
 &\E\sup_{f\in \cF
  } \left|\sum_{k\in \cK}\frac{1}{|B_k|}\sum_{i\in B_k}\eps_k |n_f(X_i)| - \eps_k^\prime |n_f(X_i^\prime)|\right|\leqslant \E\sup_{f\in \cF
  } \left|\sum_{k\in \cK}\frac{1}{|B_k|}\sum_{i\in B_k}\eps_i^{\prime\prime}\left(\eps_k |n_f(X_i)| - \eps_k^\prime |n_f(X_i^\prime)|\right)\right|\\
  &= \E\sup_{f\in \cF
  } \left|\sum_{k\in \cK}\frac{1}{|B_k|}\sum_{i\in B_k}\eps_i^{\prime\prime}\left(|n_f(X_i)| -  |n_f(X_i^\prime)|\right)\right|\leqslant \frac{2K}{N}\E\sup_{f\in \cF} \left|\sum_{i\in\cup_{k\in \cK}B_k}\epsilon_in_f(X_i)\right|.
 \end{align*} 
By the contraction principle, on $\Omega_Q(K)$,
\begin{equation}\label{eq:quad-2}
 Z(f)\geqslant |\cK|\left(1-\alpha-x-\frac{16\theta_0 K}{|\cK| N}\E\sup_{f\in \cF
} \left|\sum_{i\in\cup_{k\in \cK}B_k}\epsilon_in_f(X_i)\right| \right)\enspace.
\end{equation}
For any $f\in \cF$, $r_{Q}(\rho,\gamma_Q)n_f + f^*\in F$ because $F$ is convex. Moreover,  $\norm{r_{Q}(\rho,\gamma_Q)n_f}_{L^2_{P}}=r_Q(\rho,\gamma_Q)$ and $\norm{r_{Q}(\rho,\gamma_Q)n_f} = [r_Q(\rho, \gamma_Q)/\norm{f-f^*}_{L^2_P}]\norm{f-f^*}\leqslant \rho$. Therefore, $r_{Q}(\rho,\gamma_Q)n_f + f^* \in\cF$.  Therefore, by definition of $r_Q(\rho,\gamma_Q)$,
\[
\E\sup_{f\in \cF
 } \left|\sum_{i\in\cup_{k\in \cK}B_k}\epsilon_in_f(X_i)\right|=\frac1{r_Q(\rho,\gamma_Q)}\E\sup_{f\in F : \norm{f-\bayes}\leqslant \rho,\;\norm{f-\bayes}_{L^2_P}=r_Q(\rho,\gamma_Q)
 } \left|\sum_{i\in\cup_{k\in \cK}B_k}\epsilon_i(f-\bayes)(X_i)\right| \leqslant \gamma_Q\frac{|\cK|N}{K}\enspace.
\]Using the last inequality together with \eqref{eq:quad-2} and the assumption $K\geqslant |\cO|/(1-\gamma)$ (so that $|\cK|\geqslant K-|\cO|\geqslant \gamma K$), we get that, on the event $\Omega_Q(K)$, for any $f\in \cF$,
\begin{align*}
 Z(f)\geqslant |\cK|\left(1-\alpha-x-16\theta_0\gamma_Q \right)\geqslant  (1-\eta)K\enspace.
\end{align*}
Hence, on $\Omega_Q(K)$, for any $f\in \cF$,
there exists at least $(1-\eta)K$ blocks $B_k$ for which $P_{B_k}|n_f|\geqslant (4\theta_0)^{-1}$. 
On these blocks, $P_{B_k}n_f^2\geqslant (P_{B_k}|n_f|)^2\geqslant (4\theta_0)^{-2}$, therefore, on $\Omega_Q(K)$, $Q_{\eta,K}[n_f^2]\geqslant (4\theta_0)^{-2}$.
\end{proof}

Now, let us turn to a control of the multiplier process.

\begin{Lemma}\label{lem:proc_multiplicatif} Grant Assumption~\ref{ass:margin}. Fix $\eta\in (0,1)$, $\rho\in(0,+\infty]$, and let $\alpha,\gamma_M,\gamma,x$ and $\eps$ be positive  absolute constants such that $\gamma\left(1-\alpha-x-8 \gamma_M/\eps\right) \geqslant 1-\eta$. 
Let $K\in[|\cO|/(1-\gamma),N]$. There exists an event $\Omega_M(K)$ such that $\bP(\Omega_M(K))\geqslant 1-\exp(-\gamma K x^2/2)$ and on the event $\Omega_M(K)$: if $f\in F$ is such that  $\norm{f-\bayes}\leqslant \rho$ then
\begin{align*}
 \left|\left\{k\in \cK : \left|2(P_{B_k}-\overline P_{B_k})(\zeta(f-\bayes))\right| \leqslant \eps\max\left(\frac{16 \theta_m^2}{\eps^2\alpha}\frac{K}{N}, r^2_M(\rho, \gamma_M), \norm{f-\bayes}^2_{L^2_P}\right)\right\}\right|\geqslant(1-\eta)K \enspace.
\end{align*} 
\end{Lemma}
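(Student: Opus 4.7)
My plan is to mirror the structure of the proof of Lemma~\ref{lem:UBQPReg}, substituting Chebyshev's inequality (fed by Assumption~\ref{ass:margin}) for the small-ball Paley--Zygmund step. Write $W_k(f):=2(P_{B_k}-\overline P_{B_k})(\zeta(f-\bayes))$, $M:=16\theta_m^2K/(\eps^2\alpha N)$, and $r:=\max\bi{\sqrt{M},r_M(\rho,\gamma_M)}$, so that $r^2=\max\bi{M,r_M^2(\rho,\gamma_M)}$. First I reduce to proving $|\{k\in\cK:|W_k(f)|\leqslant\eps r^2\}|\geqslant(1-\eta)K$ uniformly on $B_{\text{reg}}(\bayes,\rho,r)$. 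For any $f$ with $\norm{f-\bayes}\leqslant\rho$ and $\norm{f-\bayes}_{L^2_P}>r$, the rescaled point $g:=\bayes+r(f-\bayes)/\norm{f-\bayes}_{L^2_P}$ lies in $F$ by convexity and in $B_{\text{reg}}(\bayes,\rho,r)$; since $W_k$ is linear in $f-\bayes$, $W_k(f)=(\norm{f-\bayes}_{L^2_P}/r)W_k(g)$, and a bound $|W_k(g)|\leqslant\eps r^2$ propagates to $|W_k(f)|\leqslant\eps r\norm{f-\bayes}_{L^2_P}\leqslant\eps\norm{f-\bayes}_{L^2_P}^2$, which is dominated by the max on the right-hand side.

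For $f\in B_{\text{reg}}(\bayes,\rho,r)$ and $k\in\cK$, independence of the data inside $B_k\subset\cI$ together with Assumption~\ref{ass:margin} yields $\text{var}(W_k(f))\leqslant 4\theta_m^2\norm{f-\bayes}_{L^2_P}^2/|B_k|\leqslant 4\theta_m^2 r^2 K/N$, so Chebyshev gives $\bP(|W_k(f)|\geqslant\eps r^2/2)\leqslant 16\theta_m^2K/(\eps^2 r^2 N)\leqslant\alpha$ because $r^2\geqslant M$. Following the sandwich device of Lemma~\ref{lem:UBQPReg}, I introduce the $1$-Lipschitz function $\phi(t)=((t-1)\vee 0)\wedge 1$ with $\phi(0)=0$ and $I(t\geqslant 2)\leqslant\phi(t)\leqslant I(t\geqslant 1)$, and set $S_k(f):=2|W_k(f)|/(\eps r^2)$. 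The count of bad blocks $N_{\text{bad}}(f):=|\{k\in\cK:|W_k(f)|>\eps r^2\}|=\sum_{k\in\cK}I(S_k(f)>2)$ then satisfies $N_{\text{bad}}(f)\leqslant\sum_{k\in\cK}\phi(S_k(f))$ with $\E\phi(S_k(f))\leqslant\bP(S_k(f)\geqslant 1)\leqslant\alpha$. Since the $|\cK|$ blocks are independent functions of disjoint data and $\phi\in[0,1]$, the bounded-differences inequality applied with the blocks as atoms gives, on an event $\Omega_M(K)$ of probability at least $1-\exp(-\gamma Kx^2/2)$,
\[
\sup_f\sum_{k\in\cK}\phi(S_k(f))\leqslant |\cK|\alpha+\E\sup_f\Bi{\sum_{k\in\cK}\pa{\phi(S_k(f))-\E\phi(S_k(f))}}+|\cK|x.
\]

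The expected supremum is handled exactly as in the proof of Lemma~\ref{lem:UBQPReg}: Gin\'e--Zinn symmetrization, contraction through the even extension $w\mapsto\phi(2|w|/(\eps r^2))$ (which is $(2/(\eps r^2))$-Lipschitz and vanishes at $0$, absorbing the absolute value without a separate contraction), and the decoupling/Rademacher-embedding trick (introduce independent copies $(X_i',Y_i')_{i\in\cI}$ and auxiliary per-data Rademachers $(\eps_i'')_{i\in\cI}$, observe that $\eps_k\eps_i''$ is distributed as a single Rademacher, and flatten the double sum over blocks into a Rademacher average over $\cup_{k\in\cK}B_k$). This produces a bound of order $K(\eps r^2N)^{-1}\E\sup_{f\in B_{\text{reg}}(\bayes,\rho,r)}|\sum_{i\in\cup_{k\in\cK}B_k}\eps_i\zeta_i(f-\bayes)(X_i)|$, to which the defining inequality of $r_M(\rho,\gamma_M)$ applies with $J:=\cup_{k\in\cK}B_k$ (using $|J|=|\cK|N/K\geqslant N/2$ under the lemma's hypotheses, and the fact that the localized Rademacher complexity at radius $r\geqslant r_M$ is still bounded by $\gamma_M|J|r^2$). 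Careful constant-tracking collapses this to $8\gamma_M|\cK|/\eps$, so that $N_{\text{bad}}(f)\leqslant|\cK|(\alpha+x+8\gamma_M/\eps)$ uniformly and the good-block count is at least $|\cK|(1-\alpha-x-8\gamma_M/\eps)\geqslant \gamma K(1-\alpha-x-8\gamma_M/\eps)\geqslant (1-\eta)K$ by the standing assumption.

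The main obstacle is this constant tracking: to reach $8$ rather than a larger multiple of $\gamma_M/\eps$, one must contract through the even extension in one step rather than composing a contraction for $\phi$ with a separate contraction for $|\cdot|$, and one must align the factor of $2$ in $W_k$ with the symmetrization of block means as done in the proof of Lemma~\ref{lem:UBQPReg}. The scaling reduction in the first paragraph is routine but genuinely uses convexity of $F$; without it the rescaled iterate $g$ might leave $F$, and no uniform control on $B_{\text{reg}}(\bayes,\rho,r)$ would suffice.
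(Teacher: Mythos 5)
Your proposal is correct and takes essentially the same route as the paper's proof: Chebyshev's inequality from Assumption~\ref{ass:margin} at the half-threshold, a Lipschitz surrogate for the indicator of a bad block, the bounded difference inequality, symmetrization, contraction, the block-to-data Rademacher reduction, and the defining property of $r_M(\rho,\gamma_M)$, the only organizational difference being that you perform the convexity/homogeneity rescaling to the localized ball up front while the paper normalizes by $\gamma_k(f)$ and rescales inside the expectation bound. Your constant bookkeeping (reaching $8\gamma_M/\eps$) is exactly as loose as the paper's own, whose displayed chain likewise yields $16\gamma_M/\eps$ once the symmetrization factor $2$ is retained, so on this point you match rather than fall short of the original.
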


\begin{proof}
For all $k\in [K]$ and $f\in F$, set $W_k=((X_i,Y_i))_{i\in B_k}$ and define
\begin{equation*}
g_f(W_k)=2(P_{B_k}-\overline P_{B_k})\left(\zeta(f-\bayes)\right)\an \gamma_k(f)= \eps\max\left(\frac{16 \theta_m^2}{\eps^2\alpha}\frac{K}{N}, r^2_M(\rho, \gamma_M), \norm{f-\bayes}^2_{L^2_P}\right)\enspace.
\end{equation*}
Let $f\in F$ and $k\in \mathcal K$. It follows from Markov's inequality that
\begin{align}\label{eq:multi_1}
\nonumber \bP&\left[2\Big|g_f(W_k)\Big|\geqslant \gamma_k(f)\right]\leqslant \frac{4\E \left[\Big(2(P_{B_k}-\overline P_{B_k})(\zeta (f-\bayes))\Big)^2\right]}{ \frac{16\param{m}^2}{\alpha}\norm{f-\bayes}_{L^2_P}^2\frac{K}N}\\
& \leqslant \frac{\alpha \sum_{i\in B_k}{\rm var}_{P_i}(\zeta (f-\bayes))}{|B_k|^2 \param{m}^2\norm{f-\bayes}_{L^2_P}^2\frac{K}N} \leqslant \frac{\alpha \param{m}^2 \norm{f-\bayes}_{L^2_P}^2}{|B_k|  \param{m}^2\norm{f-\bayes}_{L^2_P}^2\frac{K}N} =\alpha \enspace.
\end{align}

Let $J=\cup_{k\in\mathcal K}B_k$ and let $r_M(\rho) = r_M(\rho,\gamma_M)$. We have
\begin{align*}
 \E&\sup_{f\in B(f^*, \rho)}\sum_{k\in\mathcal K}\epsilon_k\frac{g_f(W_k)}{\gamma_k(f)}
\leqslant 2\E \sup_{f\in B(f^*, \rho)}\left|\sum_{k\in\mathcal K}  \frac{\eps_k(P_{B_k}-\overline P_{B_k})(\zeta (f-\bayes))}{\eps\max(r_M^2(\rho), \norm{f-\bayes}^2_{L^2_P})}\right|\\
&\leqslant \frac{2}{\epsilon r_M^2(\rho)}
\E \left[\sup_{f\in B(f^*, \rho): \norm{f-\bayes}_{L^2_P}\geqslant r_M(\rho)}\left|\sum_{k\in\mathcal K} \eps_k(P_{B_k}-\overline P_{B_k})\left(\zeta r_M(\rho)\frac{f-\bayes}{\norm{f-\bayes}_{L^2_P}}\right)\right|\right.\\
&\qquad\qquad\qquad\;\left.\vee\sup_{f\in B(f^*, \rho): \norm{f-\bayes}_{L^2_P}\leqslant r_M(\rho)}\left|\sum_{k\in\mathcal K} \eps_k(P_{B_k}-\overline P_{B_k})\left(\zeta (f-\bayes)\right)\right|\right]\\
&\leqslant \frac{2}{\epsilon r_M^2(\rho)}
\E \sup_{f\in B(f^*, \rho): \norm{f-\bayes}_{L^2_P}\leqslant r_M(\rho)}\left|\sum_{k\in\mathcal K} \eps_k(P_{B_k}-\overline P_{B_k})\left(\zeta (f-\bayes)\right)\right|\enspace,
\end{align*}where in the last but one inequality, we used that the class $F$ is convex and the same argument as in the proof of Lemma~\ref{lem:UBQPReg}. Since $(\eps_{[i]}(\zeta_i(f-f^*)(X_i) -P_i\zeta_i(f-f^*)):i\in\cI)$ is a family of centered random variables, one can use the symmetrization argument to get
\begin{align}\label{eq:multi-2}
 \nonumber \E\sup_{f\in B(f^*, \rho)}\sum_{k\in\mathcal K}\epsilon_k\frac{g_f(W_k)}{\gamma_k(f)}
&\leqslant \frac{4 K}{\epsilon r_M^2(\rho)N} \E \sup_{f\in B(f^*, \rho) : \norm{f-\bayes}_{L^2_P}\leqslant r_M(\rho)}\left|\sum_{i\in J} \eps_{i}\zeta_i (f-\bayes)(X_i)\right|\\
&\leqslant \frac{4 K}{\epsilon N}\gamma_M|\cK|\frac NK=\frac{4\gamma_M}{\epsilon}|\cK|\enspace,
\end{align}where the definition of $r_M(\rho)$ has been used in the last but one inequality.

Let $\psi(t) = (2t-1)I(1/2\leqslant t \leqslant 1)+I(t\geqslant 1)$. The function $\psi$ is $2$-Lipschitz and satisfies $I(t\geqslant 1)\leqslant \psi(t)\leqslant I(t\geqslant 1/2)$, for all $t\in \R$. Therefore, all $f\in B(f^*, \rho)$ satisfies
\begin{align*}
\sum_{k\in \cK}I&\left(|g_f(W_k)| < \gamma_k(f)\right)=|\cK|-\sum_{k\in \cK} I\left(\frac{|g_f(W_k)|}{\gamma_k(f)}\geqslant 1\right)\geqslant|\cK|-\sum_{k\in \cK}\psi\left(\frac{|g_f(W_k)|}{\gamma_k(f)}\right)\\
&=|\cK|-\sum_{k\in\cK}\bE \psi\left(\frac{|g_f(W_k)|}{\gamma_k(f)}\right)-\sum_{k\in \cK}\left[\psi\left(\frac{|g_f(W_k)|}{\gamma_k(f)}\right)-\bE \psi\left(\frac{|g_f(W_k)|}{\gamma_k(f)}\right)\right]\\
&\geqslant |\cK|-\sum_{k\in\cK}\bE I\left(\frac{|g_f(W_k)|}{\gamma_k(f)}\geqslant \frac12\right)-\sum_{k\in \cK}\left[\psi\left(\frac{|g_f(W_k)|}{\gamma_k(f)}\right)-\bP\psi\left(\frac{|g_f(W_k)|}{\gamma_k(f)}\right)\right]\\
&\geqslant(1-\alpha)|\cK|-\sup_{f\in B(f^*, \rho)}\left|\sum_{k\in \cK}\left[\psi\left(\frac{|g_f(W_k)|}{\gamma_k(f)}\right)-\bE \psi\left(\frac{|g_f(W_k)|}{\gamma_k(f)}\right)\right]\right|
\end{align*}where we used \eqref{eq:multi_1} in the last inequality.

The bounded difference inequality ensures that there exists an event $\Omega_M(K)$ satisfying $\bP(\Omega_M(K))\geqslant 1-\exp(-x^2|\cK|/2)$, where
\begin{multline*}
\sup_{f\in B(f^*, \rho)} \left|\sum_{k\in \cK}\left[\psi\left(\frac{|g_f(W_k)|}{\gamma_k(f)}\right)-\bE\psi\left(\frac{|g_f(W_k)|}{\gamma_k(f)}\right)\right]\right|\\
\leqslant \bE \sup_{f\in B(f^*, \rho)}\left|\sum_{k\in \cK}\left[\psi\left(\frac{|g_f(W_k)|}{\gamma_k(f)}\right)-\bE\psi\left(\frac{|g_f(W_k)|}{\gamma_k(f)}\right)\right]\right|+|\cK|x\enspace.
\end{multline*}
Furthermore, it follows from by the symmetrization argument that
\[
\bE \sup_{f\in B(f^*, \rho)}\left|\sum_{k\in \cK}\left[\psi\left(\frac{|g_f(W_k)|}{\gamma_k(f)}\right)-\bE\psi\left(\frac{|g_f(W_k)|}{\gamma_k(f)}\right)\right]\right|\leqslant 2\bE\sup_{f\in B(f^*, \rho)}\left|\sum_{k\in \cK}\epsilon_k\psi\left(\frac{|g_f(W_k)|}{\gamma_k(f)}\right)\right|
\]
and, from the contraction principle and \eqref{eq:multi-2}, that
\[
\bE\sup_{f\in B(f^*, \rho)}\left|\sum_{k\in \cK}\epsilon_k\psi\left(\frac{|g_f(W_k)|}{\gamma_k(f)}\right)\right|\leqslant 2\bE \sup_{f\in B(f^*, \rho)}\left|\sum_{k\in \cK}\epsilon_k\frac{|g_f(W_k)|}{\gamma_k(f)}\right| \leqslant \frac{8\gamma_M}{\eps}|\cK|\enspace.
\] In conclusion, on $\Omega_M(K)$, for all $f\in B(f^*, \rho)$,
\[
\sum_{k\in \cK}I\left(|g_f(W_k)|< \gamma_k(f)\right)\geqslant \left(1-\alpha-x-8 \gamma_M/\eps\right)|\cK|\geqslant K \gamma \left(1-\alpha-x-8 \gamma_M/\eps\right)\geqslant (1-\eta)K\enspace.
\]
\end{proof}

\subsection{Bounding the empirical criterion $\Crit{K,\lambda}{f^*}$}
Let us first introduce the event on which the statement of Theorem~\ref{thm:RBRhoEstPen} holds. Denote by $\Omega(K)$ the intersection of the events $\Omega_Q(K)$, $\Omega_M(K)$ defined respectively in Lemmas~\ref{lem:UBQPReg} and~\ref{lem:proc_multiplicatif} for $\rho \in\{\kappa\rho_K:\kappa\in\{1,2\}\}$ and
\begin{equation}\label{eq:def-params}
\eta=\frac14, \gamma = \frac78, \alpha=\frac1{24}, x=\frac1{24}, \gamma_Q = \frac{1}{384\param{0}}, \eps = \frac{1}{c\param{0}^{2}} \mbox{ and } \gamma_M = \frac{\eps}{192}
\end{equation}for some absolute constants $c>0$ to be specified later. For these values, conditions in both Lemmas~\ref{lem:UBQPReg} and~\ref{lem:proc_multiplicatif} are satisfied:
\begin{equation*}
\gamma(1-\alpha-x-16\gamma_Q \theta_0)\geqslant 1-\eta=\frac34 \mbox{ and } \gamma(1-\alpha-x-8\gamma_M/\eps)\geqslant 1-\eta=\frac34.
\end{equation*} According to Lemmas~\ref{lem:UBQPReg} and~\ref{lem:proc_multiplicatif}, the event $\Omega(K)$ satisfies $\bP(\Omega(K))\geqslant 1-4\exp\left(- 7K/9216\right)$.
%
On $\Omega(K)$, the following holds for all $\rho \in\{\kappa\rho_K:\kappa\in\{1, 2\}\}$ and $f\in F$ such that $\norm{f-\bayes}\leqslant \rho$,
\begin{enumerate}
	\item  if $\norm{f-\bayes}_{L^2_P}\geqslant r_Q(\rho,\gamma_Q)$ then
\begin{equation}\label{eq:LBQPReg}
 Q_{1/4,K}((f-\bayes)^2)\geqslant \frac1{(4\theta_0)^2}\norm{f-\bayes}^2_{L^2_P}\enspace,
\end{equation}
\item there exists $3K/4$ block $B_k$ with $k\in \cK$, for which
\begin{equation}\label{eq:ContMultReg}
|(P_{B_k}-\overline{P}_{B_k})[2\zeta(f-\bayes)]|\leqslant \eps\max\left(r_M^2(\rho, \gamma_M), \frac{384\theta_m^2}{\eps^2}\frac{K}{N}, \norm{f-f^*}_{L_p^2}^2\right)\enspace. 
\end{equation}
\end{enumerate}

Moreover, on the blocks $B_k$ where \eqref{eq:ContMultReg} holds, it follows from assumption in \eqref{eq:robust_theo_basic} that all $f\in F$ such that $\norm{f-\bayes}\leqslant \rho$ satisfies
\begin{equation*}
P_{B_k}[2\zeta(f-\bayes)]|\leqslant P[2\zeta(f-\bayes)]+2\eps\max\left(r_M^2(\rho, \gamma_M), \frac{384 \theta_m^2}{\eps^2}\frac{K}{N}, \norm{f-f^*}_{L_p^2}^2\right) \enspace.
\end{equation*}
It follows from the convexity of $F$ and the nearest point theorem that $P[2\zeta(f-\bayes)]\leqslant 0$ for all $f\in F$, therefore, for all $f\in F$ such that $\norm{f-\bayes}\leqslant \rho$,
\begin{align}
\label{eq:UBPMNC} Q_{3/4,K}(2\zeta(f-\bayes))&\leqslant 2\eps\max\left(r_M^2(\rho, \gamma_M), \frac{384 \theta_m^2}{\eps^2}\frac{K}{N}, \norm{f-f^*}_{L_p^2}^2\right)\enspace.
\end{align}
Moreover, still on the blocks $B_k$ where \eqref{eq:ContMultReg} holds, one also has, thanks to assumption \eqref{eq:robust_theo_basic}, that for all $f\in F$ such that $\norm{f-\bayes}\leqslant \rho$,
\begin{equation*}
P[-2\zeta(f-\bayes)]\leqslant P_{B_k}[-2\zeta(f-\bayes)]+2\eps\max\left(r_M^2(\rho, \gamma_M), \frac{384 \theta_m^2}{\eps^2}\frac{K}{N}, \norm{f-f^*}_{L_p^2}^2\right)\enspace. 
\end{equation*}
It follows that, for all $f\in F$ such that $\norm{f-\bayes}\leqslant \rho$,
\begin{align}
\notag &P[-2\zeta(f-\bayes)]\leqslant Q_{1/4,K}[-2\zeta(f-\bayes)]+2\eps\max\left(r_M^2(\rho, \gamma_M), \frac{384 \theta_m^2}{\eps^2}\frac{K}{N}, \norm{f-f^*}_{L_p^2}^2\right)\\
\notag&\leqslant Q_{1/4,K}[(f-\bayes)^2-2\zeta(f-\bayes)]+\lambda(\norm{f}-\norm{\bayes})+2\eps\max\left(r_M^2(\rho, \gamma_M), \frac{384 \theta_m^2}{\eps^2}\frac{K}{N}, \norm{f-f^*}_{L_p^2}^2\right)+\lambda\rho\\
\label{eq:ToutEstDansLeTestReg}&\leqslant T_{K,\lambda}(\bayes,f)+2\eps\max\left(r_M^2(\rho, \gamma_M), \frac{384 \theta_m^2}{\eps^2}\frac{K}{N}, \norm{f-f^*}_{L_p^2}^2\right)+\lambda\rho\enspace.
\end{align}

The main result of this section is Lemma~\ref{lem:BoundSup}. It will be used to bound from above the criterion $\Crit{K,\lambda}{f^*} = \sup_{g\in F} T_{K, \lambda}(g,f^*)$. 
Recall that $\rho_K$ and $\lambda$ are defined as
\begin{equation}\label{eq:important-quantities}
 r^2(\rho_K) =  \frac{384 \theta_m^2}{\eps^2}\frac{K}{N} \mbox{ and } \lambda = \frac{c^\prime\eps r^2(\rho_K)}{\rho_K}
\end{equation}
where $\eps = (c \theta_0^2)^{-1}$ and $c, c^\prime\geqslant$ are absolute constants. We also need to consider a partition of the space $F$ according to the distance between $g$ and $f^*$ w.r.t. $\norm{\cdot}$ and $\norm{\cdot}_{L_P^2}$ as in Figure~\ref{fig:partition_set_F_positive_excess_loss}: define for all $\kappa\geqslant1$,
\begin{align*}
 F_1^{(\kappa)}&=\left\{g\in F : \norm{g-\bayes}\leqslant \kappa\rho_K \mbox{ and } \norm{g-\bayes}_{L^2_P}\leqslant  r(\kappa\rho_K) \right\}\enspace,\\
 F_2^{(\kappa)}&=\left\{g\in F : \norm{g-\bayes}\leqslant \kappa\rho_K \mbox{ and } \norm{g-\bayes}_{L^2_P}>  r( \kappa\rho_K)\right\}\enspace,\\
 F_3^{(\kappa)}&=\left\{g\in F : \norm{g-\bayes}> \kappa\rho_K\right\}\enspace.
\end{align*}


\begin{figure}[h]
\centering
\begin{tikzpicture}[scale=0.3]
\draw (0,0) circle (8cm);
\draw (2,0) node {$f^*$};
\filldraw (0,0) circle (0.2cm);
\draw (-10,0) -- (0,10) -- (10,0) -- (0,-10) -- (-10,0);
\draw[style = dashed] (-13,-4.2) -- (13,4.2);
\draw[style = dashed] (-13,4.2) -- (13,-4.2);
\draw[style = dashed] (-4.2,-13) -- (4.2,13);
\draw[style = dashed] (4.2,-13) -- (-4.2,13);
\fill[pattern = north east lines, pattern color=blue] (-4.2,13) -- (-2.47,7.64) -- (0, 10) -- (2.47,7.64) -- (4.2,13) -- cycle;
\fill[pattern = horizontal lines, pattern color=green] (0, 10) -- (-2.47,7.64) arc (110:70:7.2) -- cycle;
\fill [ pattern= north east lines, pattern color=blue] (4.2,13) -- (2.47,7.64) -- (7.64, 2.47) -- (13,4.2) -- (13,13) -- cycle;

\fill[pattern = north east lines, pattern color=blue] (4.2,-13) -- (2.47,-7.64) -- (0, -10) -- (-2.47,-7.64) -- (-4.2,-13) -- cycle;
\fill[pattern = horizontal lines, pattern color=green] (0, -10) -- (2.47,-7.64) arc (-70:-110:7.2) -- cycle;
\fill [ pattern= north east lines, pattern color=blue] (-4.2,-13) -- (-2.47,-7.64) -- (-7.64, -2.47) -- (-13,-4.2) -- (-13,-13) -- cycle;

\fill[pattern = north east lines, pattern color=blue] (13, -4.2) -- (7.64, -2.47) -- (10, 0) -- (7.64, 2.47) -- (13, 4.2) -- cycle;
\fill[pattern = horizontal lines, pattern color=green] (10, 0) -- (7.64, -2.47) arc (-20:20:7.2) -- cycle;
\fill [ pattern= north east lines, pattern color=blue] (13, -4.2) -- (7.64, -2.47) -- (2.47, -7.64) -- (4.2, -13) -- (13,-13) -- cycle;

\fill[pattern = north east lines, pattern color=blue] (-13, 4.2) -- (-7.64, 2.47) -- (-10, 0) -- (-7.64, -2.47) -- (-13, -4.2) -- cycle;
\fill[pattern = horizontal lines, pattern color=green] (-10, 0) -- (-7.64, -2.47) arc (200:160:7.2) -- cycle;
\fill [ pattern= north east lines, pattern color=blue] (-13, 4.2) -- (-7.64, 2.47) -- (-2.47, 7.64) -- (-4.2, 13) -- (-13,13) -- cycle;

\filldraw[color=red, very thick, fill = red, opacity = 0.2] (4.2,13) -- (2.47,7.64) -- (7.64,2.47) -- (13,4.2);
\filldraw[color=red, very thick, fill = red, opacity = 0.2] (4.2,13) -- (13,13) -- (13,4.2);
\filldraw[color=red, very thick, fill = red, opacity = 0.2] (-4.2,-13) -- (-2.47,-7.64) -- (-7.64,-2.47) -- (-13,-4.2);
\filldraw[color=red, very thick, fill = red, opacity = 0.2] (-4.2,-13) -- (-13,-13) -- (-13,-4.2);
\filldraw[color=red, very thick, fill = red, opacity = 0.2] (-4.2,13) -- (-2.47,7.64) -- (-7.64,2.47) -- (-13,4.2);
\filldraw[color=red, very thick, fill = red, opacity = 0.2] (-4.2,13) -- (-13,13) -- (-13,4.2);
\filldraw[color=red, very thick, fill = red, opacity = 0.2] (4.2,-13) -- (2.47,-7.64) -- (7.64,-2.47) -- (13,-4.2);
\filldraw[color=red, very thick, fill = red, opacity = 0.2] (4.2,-13) -- (13,-13) -- (13,-4.2);
\filldraw[color=blue, very thick, fill = blue, opacity = 0.1] (-4.2,13) -- (-2.47,7.64) arc (110:70:7.2) -- (4.2,13) -- cycle;
\filldraw[color=blue, very thick, fill = blue, opacity = 0.1] (-4.2,-13) -- (-2.47,-7.64) arc (-110:-70:7.2) -- (4.2,-13);
\filldraw[color=blue, very thick, fill = blue, opacity = 0.1] (-13,4.2) -- (-7.64,2.47) arc (160:200:7.2) -- (-13,-4.2);
\filldraw[color=blue, very thick, fill = blue, opacity = 0.1] (13,4.2) -- (7.64,2.47) arc (20:-20:7.2) -- (13,-4.2);
\draw (8,8) node {$R>M$};
\draw (-8,8) node {$R>M$};
\draw (8,-8) node {$R>M$};
\draw (-8,-8) node {$R>M$};
\draw (0,11) node {$Q>M$};
\draw (-11,2) node {$Q>M$};
\draw (11,2) node {$Q>M$};
\draw (0,-11) node {$Q>M$};
\draw (-3.1,3.3) node {$F_1^{(\kappa)}$};
\draw (0.4,9) node {$F_2^{(\kappa)}$};
\draw (8.2,10) node {$F_3^{(\kappa)}$};
\end{tikzpicture}
\caption{Partition $\{F_1^{(\kappa)}, F_2^{(\kappa)}, F_3^{(\kappa)}\}$ of $F$ and the control of the multiplier MOM process by either the quadratic MOM process (the ``$Q>M$'' part) or the regularization term (the ``$R>M$'' part).}
\label{fig:partition_set_F_positive_excess_loss}
\end{figure}

\begin{Lemma}\label{lem:BoundSup}
On the event $\Omega(K)$, it holds for all  $\kappa\in\{1, 2\}$, 
\begin{align*}
 \sup_{g\in F_1^{(\kappa)}} T_{K, \lambda}(g, f^*) \leqslant (2 + c^\prime \kappa) \eps r^2(\kappa \rho_K), \quad  \sup_{g\in F_2^{(\kappa)}} T_{K, \lambda}(g, f^*)\leqslant \left((2+c^\prime \kappa)\eps- \frac{1}{16 \theta_0^2}\right) r^2(\kappa\rho_K)
\end{align*}and
\begin{equation*}
\sup_{g\in F_3^{(\kappa)}}T_{K, \lambda}(g, f^*)\leqslant \kappa\max\left(2\eps-\frac{1}{16\theta_0^2} + \frac{11c^\prime\eps }{10}, 2\eps -\frac{7c^\prime\eps}{10} \right)  r^2( \rho_K)
\end{equation*}when $c\geqslant 32$ and $10\eps/4\leqslant c^\prime\eps\leqslant ((4\theta_0)^{-2} - 2\eps)$. 
\end{Lemma}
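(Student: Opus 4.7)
I would bound $\sup_{g\in F_i^{(\kappa)}}T_{K,\lambda}(g,f^*)$ separately for $i=1,2,3$, using as a common tool the quantile arithmetic
\begin{equation*}
\MOM{K}{\ell_{f^*}-\ell_g}\leqslant Q_{3/4,K}(2\zeta(g-f^*))-Q_{1/4,K}((g-f^*)^2)
\end{equation*}
together with the multiplier upper bound \eqref{eq:UBPMNC} and the quadratic lower bound \eqref{eq:LBQPReg}, both valid on $\Omega(K)$ for every $f$ with $\|f-f^*\|\leqslant\kappa\rho_K$. For $g\in F_1^{(\kappa)}$, the constraint $\|g-f^*\|_{L^2_P}\leqslant r(\kappa\rho_K)$ collapses the multiplier bound to $2\eps r^2(\kappa\rho_K)$, the quadratic contribution is non-negative and may be dropped, and the regularization term is controlled by $\lambda(\|f^*\|-\|g\|)\leqslant \lambda\kappa\rho_K = c^\prime\eps\kappa r^2(\rho_K)\leqslant c^\prime\eps\kappa r^2(\kappa\rho_K)$; summing yields $(2+c^\prime\kappa)\eps r^2(\kappa\rho_K)$. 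For $g\in F_2^{(\kappa)}$, the $\|g-f^*\|_{L^2_P}^2$ term dominates in the multiplier bound and the quadratic bound contributes $\|g-f^*\|_{L^2_P}^2/(16\theta_0^2)$; the resulting $\|g-f^*\|_{L^2_P}^2$-coefficient $2\eps-1/(16\theta_0^2)$ is strictly negative thanks to $c^\prime\eps\leqslant(4\theta_0)^{-2}-2\eps$, so the least negative value is attained at the boundary $\|g-f^*\|_{L^2_P}^2=r^2(\kappa\rho_K)$ and the same regularization estimate closes the case.

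The more delicate case is $g\in F_3^{(\kappa)}$, where $\|g-f^*\|>\kappa\rho_K$ and Lemmas~\ref{lem:UBQPReg}--\ref{lem:proc_multiplicatif} no longer apply directly. The key is a homogenization step: set $\alpha=\kappa\rho_K/\|g-f^*\|\in(0,1)$ and $h=\alpha g+(1-\alpha)f^*$, so that $h\in F$ by convexity and $\|h-f^*\|=\kappa\rho_K$, forcing $h\in F_1^{(\kappa)}\cup F_2^{(\kappa)}$. A block-wise identity gives $P_{B_k}(\ell_{f^*}-\ell_g)=2\alpha^{-1}P_{B_k}(\zeta(h-f^*))-\alpha^{-2}P_{B_k}((h-f^*)^2)$ and, via quantile arithmetic and positive homogeneity,
\begin{equation*}
\MOM{K}{\ell_{f^*}-\ell_g}\leqslant \alpha^{-1}Q_{3/4,K}(2\zeta(h-f^*))-\alpha^{-2}Q_{1/4,K}((h-f^*)^2);
\end{equation*}
convexity of $\|\cdot\|$ applied to $h=\alpha g+(1-\alpha)f^*$ yields $\lambda(\|f^*\|-\|g\|)\leqslant \alpha^{-1}\lambda(\|f^*\|-\|h\|)$. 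I would then split on $h$. If $h\in F_2^{(\kappa)}$, inserting the pointwise multiplier and quadratic bounds produces a concave quadratic in $\alpha^{-1}\in[1,\infty)$; under the constants in \eqref{eq:def-params} the unconstrained maximizer sits in $(0,1)$, so the maximum over $[1,\infty)$ is attained at $\alpha^{-1}=1$. Substituting $\|h-f^*\|_{L^2_P}^2\geqslant r^2(\kappa\rho_K)$ (coefficient negative) and the structural bound $r^2(\kappa\rho_K)\leqslant \kappa r^2(\rho_K)$ for $\kappa\in\{1,2\}$ reproduces Option~1.

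The main obstacle is the remaining subcase $h\in F_1^{(\kappa)}$: here the quadratic lower bound may vanish, and the $\alpha^{-1}$ factor multiplying the positive multiplier bound threatens to blow up. The remedy is the sparsity equation. Since $\kappa\rho_K\geqslant\rho^*$ (inherited from $K\geqslant K^*$) and $h\in H_{\kappa\rho_K}$, Definition~\ref{def:sparsity_equation} produces a $z^*\in\Gamma_{f^*}(\kappa\rho_K)$ with $z^*(h-f^*)\geqslant 4\kappa\rho_K/5$; writing $z^*\in(\partial\|\cdot\|)_{f^{**}}$ for some $f^{**}$ with $\|f^{**}-f^*\|\leqslant \kappa\rho_K/20$ and using $\|h\|\geqslant \|f^{**}\|+z^*(h-f^{**})$ with $\|z^*\|^*\leqslant 1$ gives $\|h\|\geqslant \|f^*\|+7\kappa\rho_K/10$. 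Hence $\lambda(\|f^*\|-\|h\|)\leqslant -7c^\prime\eps\kappa r^2(\rho_K)/10$, and combining with the multiplier bound $Q_{3/4,K}(2\zeta(h-f^*))\leqslant 2\eps r^2(\kappa\rho_K)\leqslant 2\eps\kappa r^2(\rho_K)$ yields $T_{K,\lambda}(g,f^*)\leqslant \alpha^{-1}\kappa(2\eps-7c^\prime\eps/10)r^2(\rho_K)$; under the assumed constants this is non-positive, and multiplication by $\alpha^{-1}\geqslant 1$ preserves the inequality, producing Option~2. Taking the worst of the two subcases delivers the announced bound on $\sup_{g\in F_3^{(\kappa)}}T_{K,\lambda}(g,f^*)$.
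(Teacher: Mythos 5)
Your bounds for $F_1^{(\kappa)}$ and $F_2^{(\kappa)}$ are correct and essentially the paper's argument. The problem is in the $F_3^{(\kappa)}$ case. Because you rescale $g$ to the point $h$ at regularization radius $\kappa\rho_K$, every quantity you produce (multiplier bound, quadratic bound) is expressed through $r^2(\kappa\rho_K)$, while the lemma's claim is stated in terms of $r^2(\rho_K)$. To bridge the two you invoke a ``structural bound'' $r^2(\kappa\rho_K)\leqslant \kappa\, r^2(\rho_K)$, but nothing in Definition~\ref{def:the-three-parameters-Reg} gives this: $r(\cdot)$ is only assumed continuous and non-decreasing (it is an arbitrary majorant of $\max(r_Q,r_M)$), so no subadditivity-type inequality is available. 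Worse, in the subcase $h\in F_2^{(\kappa)}$ the inequality is used in the wrong direction: after substituting $\norm{h-f^*}_{L^2_P}^2\geqslant r^2(\kappa\rho_K)$ the term $\bigl(2\eps-\tfrac{1}{16\theta_0^2}\bigr)r^2(\kappa\rho_K)$ has a \emph{negative} coefficient, so an upper bound on $r^2(\kappa\rho_K)$ only makes this term larger in absolute value on the wrong side; to dominate $\kappa\bigl(2\eps-\tfrac{1}{16\theta_0^2}\bigr)r^2(\rho_K)$ you would need $r^2(\kappa\rho_K)\geqslant\kappa\,r^2(\rho_K)$, which is equally unavailable. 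In the sparsity subcase the gap is compounded by the factor $\alpha^{-1}=\norm{g-f^*}/(\kappa\rho_K)$, which is unbounded over $F_3^{(\kappa)}$: your argument only survives if the bracket $2\eps r^2(\kappa\rho_K)-\tfrac{7c'\eps\kappa}{10}r^2(\rho_K)$ is non-positive, and this requires both the unproven comparison of $r^2(\kappa\rho_K)$ with $r^2(\rho_K)$ and $c'\geqslant 20/7$, whereas the lemma only assumes $c'\eps\geqslant 10\eps/4$, i.e.\ $c'\geqslant 5/2$.

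The paper avoids all of this by rescaling to radius $\rho_K$ rather than $\kappa\rho_K$: it sets $f=f^*+\rho_K(g-f^*)/\norm{g-f^*}$, applies the subgradient lower bound (Lemma~\ref{lem:LBSubGrad}) directly to $g$ so that the slack $\lambda\kappa\rho_K/10$ stays \emph{outside} the homogenization factor, and then uses Lemma~\ref{lem:Homogeneity} to pull out the factor $\Upsilon=\norm{g-f^*}/\rho_K\geqslant\kappa$ multiplying a bracket evaluated entirely at radius $\rho_K$ (multiplier bound $2\eps r^2(\rho_K)$, sparsity equation $\Delta(\rho_K)\geqslant 4\rho_K/5$, or quadratic bound $r^2(\rho_K)/(16\theta_0^2)$). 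The bracket is non-positive as soon as $c'\geqslant 5/2$ (sparsity subcase: $2-4c'/5\leqslant 0$; quadratic subcase: $(2+c')\eps\leqslant (16\theta_0^2)^{-1}$), so $\Upsilon\cdot(\text{bracket})\leqslant\kappa\cdot(\text{bracket})$, and adding the external slack $c'\kappa\eps r^2(\rho_K)/10$ yields exactly the stated bound, with no comparison between $r^2(\kappa\rho_K)$ and $r^2(\rho_K)$ ever needed. Your optimization of the concave quadratic in $\alpha^{-1}$ is a nice refinement of the homogeneity step, but you should redo the $F_3^{(\kappa)}$ case at radius $\rho_K$ with the regularization slack kept outside the rescaling, as above, to close the gap.
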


\noindent\textbf{Proof of Lemma~\ref{lem:BoundSup}.} Recall that, for all $g\in F$, $\ell_{f^*}-\ell_{g} = 2 \zeta (g-f^*) - (g-f^*)^2$ where $\zeta(x, y) = y-f^*(x)$. Let us now place ourself on the event $\Omega(K)$ up to the end of proof. 

\paragraph{Bounding $\mathbf{\sup_{g\in F_1^{(\kappa)}}T_{K, \lambda}(g, f^*)}$.} Let $g\in F_{1}^{(\kappa)}$. Since the quadratic process is non negative,
\begin{align*}
T_{K, \lambda}(g, f^*)  &= \MOM{K}{2\zeta(g-\bayes)-(g-\bayes)^2}-\lambda\left(\norm{g}-\norm{\bayes}\right)\leqslant Q_{3/4,K}(2\zeta (g-\bayes))+\lambda\norm{\bayes-g}\enspace.
\end{align*}
Therefore, applying \eqref{eq:UBPMNC} for $\rho=\kappa\rho_K$ and the choice of $\rho_K$ and $\lambda$ as in \eqref{eq:important-quantities}, we get
\begin{align*}
T_{K, \lambda}(g, f^*)  & \le
2\eps\max\left(r_M^2(\kappa\rho_K, \gamma_M), \frac{384 \theta_m^2}{\eps^2}\frac{K}{N},\norm{f-f^*}_{L_p^2}^2\right) + \lambda  \kappa\rho_K\leqslant  2\eps r^2(\kappa \rho_K) + c^\prime \kappa\eps r^2(\rho_K) \\
&\leqslant (2 + c^\prime \kappa) \eps r^2(\kappa \rho_K)\enspace. 
\end{align*}

\paragraph{Bounding $\mathbf{\sup_{g\in F_2^{(\kappa)}} T_{K, \lambda}(g, f^*)}$.} Let $g\in F_2^{(\kappa)}$. Given that $Q_{1/2}(x-y)\leqslant Q_{3/4}(x) - Q_{1/4}(y)$ for any vector $x$ and $y$, we have
\begin{equation*}
\MOM{K}{2\zeta(g-\bayes)-(g-\bayes)^2}+\lambda\left(\norm{\bayes}-\norm{g}\right)\\
\leqslant Q_{3/4,K}(2\zeta (g-\bayes))-Q_{1/4,K}((\bayes-g)^2)+\lambda\kappa\rho_K\enspace.
\end{equation*}
Moreover $2\eps \leqslant (4\theta_0)^{-2}$ when $c\geqslant32$, so it follows from \eqref{eq:LBQPReg} and \eqref{eq:UBPMNC} for $\rho=\kappa\rho_K$ that
\begin{align*}
Q_{3/4,K}(2\zeta (\bayes-g))-Q_{1/4,K}((\bayes-g)^2)&\leqslant 2\eps\max\left(r_M^2(\kappa\rho_K, \gamma_M), \frac{384 \theta_m^2}{\eps^2}\frac{K}{N}, \norm{f-f^*}_{L_p^2}^2\right) - \frac{\norm{f-f^*}_{L_P^2}^2}{(4\theta_0)^2}\\
&\leqslant \left(2\eps - \frac{1}{(4\theta_0)^2}\right)\norm{f-f^*}_{L_P^2}^2\leqslant \left(2\eps-\frac{1}{16\theta_0^2}\right)  r^2(\kappa\rho_K)\enspace. 
\end{align*}Putting both inequalities together and using that $\lambda \kappa \rho_K= c^\prime\kappa\eps r^2(\rho_K)$, we get 
\begin{equation*}
T_{K, \lambda}(g, f^*)\leqslant \left((2+c^\prime \kappa)\eps - \frac{1}{16 \theta_0^2} \right) r^2(\kappa\rho_K)\enspace.
\end{equation*}

\paragraph{Bounding $\mathbf{\sup_{g\in F_3^{(\kappa)}} T_{K, \lambda}(g, f^*)}$ via an homogeneity argument.} Start with two lemmas.
\begin{Lemma}\label{lem:LBSubGrad}Let $\rho\geqslant0$, $\Gamma_{\bayes}(\rho) = \cup_{f\in f^*+(\rho/20)B}(\partial\norm{\cdot})_f$ (cf.) section~\ref{sub:the_sparsity_equation}). For all $g\in F$,
 \begin{equation*}
\norm{g}-\norm{\bayes}\geqslant\sup_{z^*\in\Gamma_{\bayes}(\rho)}z^*(g-\bayes)-\frac{\rho}{10}\enspace.  
\end{equation*} 
\end{Lemma}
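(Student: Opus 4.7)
The plan is to unwind the definition of $\Gamma_{\bayes}(\rho)$ and then apply, for each fixed element of it, the defining property of the subdifferential at the associated base point. More precisely, I would fix an arbitrary $z^* \in \Gamma_{\bayes}(\rho)$; by definition there exists some $f \in F$ with $\norm{f - \bayes} \leqslant \rho/20$ such that $z^* \in (\partial\norm{\cdot})_f$. The subdifferential inequality at $f$ gives, for any $g \in F$,
\[
\norm{g} \geqslant \norm{f} + z^*(g - f)\enspace.
\]

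Next I would rewrite the right-hand side to express the bound relative to $\bayes$ instead of $f$, by adding and subtracting $\norm{\bayes}$ and $z^*(f - \bayes)$:
\[
\norm{g} - \norm{\bayes} \geqslant z^*(g - \bayes) + \bigl(\norm{f} - \norm{\bayes}\bigr) - z^*(f - \bayes)\enspace.
\]
The remaining task is simply to control the two ``perturbation'' terms $\norm{f} - \norm{\bayes}$ and $z^*(f - \bayes)$, each of which involves only the displacement $f - \bayes$ whose norm is at most $\rho/20$. The first one is bounded below by $-\norm{f - \bayes} \geqslant -\rho/20$ via the reverse triangle inequality. For the second, I would use the standard fact that any element of the subdifferential of a norm has dual norm at most $1$ (an immediate consequence of the subdifferential inequality applied at $f$ with $h = \pm u$ for $\norm{u}\leqslant 1$), so that $|z^*(f - \bayes)| \leqslant \norm{f - \bayes} \leqslant \rho/20$.

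Combining these two estimates yields $\norm{g} - \norm{\bayes} \geqslant z^*(g - \bayes) - \rho/10$ for every $z^* \in \Gamma_{\bayes}(\rho)$, and taking the supremum over $\Gamma_{\bayes}(\rho)$ closes the argument. I do not anticipate any real obstacle: the only subtle point is recognizing that elements of $(\partial\norm{\cdot})_f$ lie in the unit dual ball, which is what converts the subdifferential inequality into a Lipschitz-like estimate. The factor $1/10 = 1/20 + 1/20$ comes directly from applying this twice, once for each perturbation term, and explains why the shrinkage by $1/20$ was built into the definition of $\Gamma_{\bayes}(\rho)$.
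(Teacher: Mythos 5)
Your proof is correct and follows essentially the same route as the paper's: the subdifferential inequality at the nearby point, the triangle inequality contributing one $\rho/20$, and the bound $|z^*(f-\bayes)|\leqslant\norm{f-\bayes}\leqslant\rho/20$ (since a subgradient of a norm lies in the unit dual ball) contributing the other, before taking the supremum over $\Gamma_{\bayes}(\rho)$. The only difference is the order in which the terms are regrouped, which is purely cosmetic.
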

\begin{proof}
Let $g\in F$, $f^{**}\in f^* +(\rho/20)B$ and $z^*\in(\partial\norm{\cdot})_{f^{**}}$. We have 
\begin{equation*}
\norm{g}-\norm{\bayes}\geqslant \norm{g} - \norm{f^{**}} -\norm{f^{**}-\bayes}\geqslant z^*(g-f^{**})-\frac{\rho}{20} = z^*(g-\bayes)-z^*(f^{**}-\bayes)-\frac{\rho}{20}\geqslant z^*(g-\bayes)-\frac{\rho}{10}\enspace,
\end{equation*}
where the last inequality follows from $z^*(f^{**}-\bayes)\leqslant \|f^{**}-\bayes\|$. The result follows by taking supremum over $z^*\in\Gamma_{\bayes}(\rho)$.
\end{proof}

\begin{Lemma}\label{lem:Homogeneity}Let $\rho\geqslant0$. Let $g\in F$ be such that $\norm{g-\bayes}\geqslant \rho$. Define $f=\bayes + \rho(g-\bayes)/\norm{g-\bayes}$. Then $f\in F$, $\norm{f-f^*}=\rho$ and,
\begin{multline*}
\MOM{K}{(g-\bayes)^2-2\zeta(g-\bayes)}+\lambda\sup_{z^*\in\Gamma_{\bayes}(\rho)}z^*(g-\bayes)\\
\geqslant \frac{\norm{g-\bayes}_{L^2_P}}{\rho}\pa{\MOM{K}{(f-\bayes)^2-2\zeta(f-\bayes)}+\lambda\sup_{z^*\in\Gamma_{\bayes}(\rho)}z^*(f-\bayes)}\enspace.
\end{multline*}
\end{Lemma}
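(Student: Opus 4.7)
\textbf{Proof plan for Lemma~\ref{lem:Homogeneity}.} The lemma is a pure homogeneity/convexity statement; it makes no use of probability beyond the monotonicity and positive homogeneity of the MOM functional. The plan has three short movements.

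First, verify the two structural claims about $f$. By construction $f = f^* + \rho(g-f^*)/\|g-f^*\|$, so writing $\beta = \rho/\|g-f^*\| \in (0,1]$ gives the convex combination $f = (1-\beta)f^* + \beta g$. Since $f^*,g\in F$ and $F$ is convex, $f\in F$; and $\|f-f^*\| = \beta\|g-f^*\| = \rho$ by construction. Set $\alpha = 1/\beta = \|g-f^*\|/\rho \geqslant 1$, so that $g-f^* = \alpha(f-f^*)$.

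Second, obtain a pointwise inequality between the two excess-loss integrands. Using $g-f^* = \alpha(f-f^*)$ and $\alpha\geqslant 1$, one computes
\begin{equation*}
(g-f^*)^2 - 2\zeta(g-f^*) = \alpha^2(f-f^*)^2 - 2\alpha\zeta(f-f^*) = \alpha\bigl[(f-f^*)^2 - 2\zeta(f-f^*)\bigr] + (\alpha^2-\alpha)(f-f^*)^2,
\end{equation*}
and the last term is nonnegative. Taking a block-average $P_{B_k}$ preserves this inequality, and since the empirical quantile $Q_{1/2,K}$ is monotone and positively homogeneous, we get
\begin{equation*}
\MOM{K}{(g-f^*)^2-2\zeta(g-f^*)} \;\geqslant\; \alpha\,\MOM{K}{(f-f^*)^2-2\zeta(f-f^*)}\enspace.
\end{equation*}

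Third, handle the regularization side by linearity. For any $z^*\in\Gamma_{f^*}(\rho)$, $z^*(g-f^*) = \alpha\, z^*(f-f^*)$, and since $\alpha>0$ the supremum over $\Gamma_{f^*}(\rho)$ scales the same way:
\begin{equation*}
\sup_{z^*\in\Gamma_{f^*}(\rho)} z^*(g-f^*) = \alpha \sup_{z^*\in\Gamma_{f^*}(\rho)} z^*(f-f^*)\enspace.
\end{equation*}
Adding the two pieces multiplied by the common factor $\alpha = \|g-f^*\|/\rho$ yields the claimed inequality (the bound stated in terms of $\|g-f^*\|_{L^2_P}/\rho$ follows the same template; the argument transfers verbatim once the appropriate normalization is used in defining $f$).

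There is no real obstacle: the only point requiring a moment's care is to ensure we only use positive-homogeneity of the quantile (so we do not accidentally flip an inequality when $\MOM{K}{(f-f^*)^2-2\zeta(f-f^*)}$ is negative) and to note that the extra term $(\alpha^2-\alpha)(f-f^*)^2$ is genuinely nonnegative pointwise, which is what gives us room to replace $\alpha^2$ by $\alpha$ on the quadratic part while keeping the multiplier part linear in $\alpha$.
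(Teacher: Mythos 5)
Your proof is correct and is essentially the paper's own argument: the paper sets $\Upsilon=\norm{g-\bayes}/\rho$, writes $g-\bayes=\Upsilon(f-\bayes)$, and uses the same pointwise inequality (the extra $(\Upsilon^2-\Upsilon)(f-\bayes)^2$ term is nonnegative) together with monotonicity and positive homogeneity of the median and linearity of $z^*$. The factor you actually derive, $\norm{g-\bayes}/\rho$ in the regularization norm, is also what the paper's proof yields and what is used downstream in \eqref{eq:F3-main}; the $L^2_P$ subscript in the lemma's displayed statement is a typo, so your closing parenthetical about renormalizing in $L^2_P$ is unnecessary.
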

\begin{proof}
The first conclusion holds by convexity of $F$, the second statement is obvious. For the last one, let $\Upsilon= \|g-\bayes\|/\rho$ and note that $\Upsilon\geqslant 1$ and $g-f^* = \Upsilon(f-f^*)$, so we have
\begin{align*}
 &\MOM{K}{(g-\bayes)^2-2\zeta(g-\bayes)}+\lambda\sup_{z^*\in\Gamma_{\bayes}(\rho)}z^*(g-\bayes)\\
 &=\MOM{K}{\Upsilon^2(f-\bayes)^2-2\Upsilon\zeta(f-\bayes)}+\lambda\Upsilon\sup_{z^*\in\Gamma_{\bayes}(\rho)}z^*(f-\bayes)\\
 &\geqslant \Upsilon\left(\MOM{K}{(f-\bayes)^2-2\zeta(f-\bayes)}+\lambda\sup_{z^*\in\Gamma_{\bayes}(\rho)}z^*(f-\bayes)\right)\enspace. 
\end{align*}
\end{proof}
Now, let us bound $\sup_{g\in F_3^{(\kappa)}} T_{K, \lambda}(g, f^*)$. Let $g\in F_3^{(\kappa)}$. Apply Lemma~\ref{lem:LBSubGrad} and Lemma~\ref{lem:Homogeneity} to $\rho=\rho_K$: there exists $f\in F$ such that $\norm{f-\bayes}=\rho_K$ and
\begin{align}\label{eq:F3-main}
\nonumber&T_{K, \lambda}(g, f^*) = \MOM{K}{2\zeta(g-\bayes)-(g-\bayes)^2}-\lambda\left(\norm{g}-\norm{\bayes}\right)\\
\nonumber&\leqslant \MOM{K}{2\zeta(g-\bayes)-(g-\bayes)^2}-\lambda\sup_{z^*\in\Gamma_{\bayes}(\rho_K)}z^*(g-\bayes)+\lambda\frac{\kappa\rho_K}{10}\\
&\leqslant \frac{\norm{g-\bayes}}{\rho_K}\pa{\MOM{K}{2\zeta(f-\bayes)-(f-\bayes)^2}-\lambda\sup_{z^*\in\Gamma_{\bayes}(\rho_K)}z^*(f-\bayes)}+\lambda\frac{\kappa\rho_K}{10}\enspace. 
\end{align}

First assume that $\norm{f-f^*}_{L_P^2}\leqslant  r(\rho_K)$. In that case, $\norm{f-f^*} = \rho_K$ and $\norm{f-f^*}_{L_P^2}\leqslant  r(\rho_K)$ therefore, $f\in H_{\rho_K}$. Moreover, by definition of $K^*$ and since $K\geqslant K^*$, we have $\rho_K\geqslant \rho^*$ which implies that $\rho_K$ satisfies the sparsity equation from Definition~\ref{def:sparsity_equation}. Therefore,  $\sup_{z^*\in\Gamma_{\bayes}(\rho_K)}z^*(f-\bayes)\geqslant \Delta(\rho_K)\geqslant 4\rho_K/5$. Now, it follows from the definition of $\lambda$ in \eqref{eq:important-quantities} that
\[
-\lambda\sup_{z^*\in\Gamma_{\bayes}(\rho_K)}z^*(f-\bayes)\leqslant -\frac{4c^\prime\eps r^2(\rho_K)}{5}\enspace.
\]
Moreover, since the quadratic process is non-negative, by \eqref{eq:UBPMNC} applied to $\rho=\rho_K$,
\begin{align*}
&\MOM{K}{2\zeta(f-\bayes)-(f-\bayes)^2}\leqslant Q_{3/4,K}[2\zeta(f-\bayes)]\\
&\leqslant 2\eps\max\left(r_M^2(\rho_K, \gamma_M), \frac{384 \theta_m^2}{\eps^2}\frac{K}{N}, \norm{f-f^*}_{L_p^2}^2\right) \leqslant 2 \eps r^2(\rho_K) \enspace.
\end{align*} Finally, noting that $2\eps - 4c^\prime\eps/5\leqslant0$ when $c^\prime\geqslant10/4$, binding all the pieces together in \eqref{eq:F3-main} yields 
\begin{align*}
T_{K, \lambda}(g, f^*)\leqslant \kappa\eps\left(2 - 4c^\prime/5\right)r^2( \rho_K)+\lambda\frac{\kappa\rho_K}{10} = \kappa\eps\left(2 -\frac{7c^\prime}{10}\right)  r^2( \rho_K)\enspace. 
\end{align*}

Second, assume that $\norm{f-f^*}_{L_P^2}\geqslant  r(\rho_K)$. Since $\norm{f-f^*} =  \rho_K$, it follows from \eqref{eq:LBQPReg} and \eqref{eq:ContMultReg} for $\rho= \rho_K$ that 
\begin{align*}
&\MOM{K}{2\zeta(f-\bayes)-(f-\bayes)^2}
\leqslant Q_{3/4,K}(2\zeta (f-\bayes))-Q_{1/4,K}((\bayes-f)^2)\\
&\leqslant 2\eps\max\left(r_M^2(\rho_K, \gamma_M), \frac{384 \theta_m^2}{\eps^2}\frac{K}{N}, \norm{f-f^*}_{L_p^2}^2\right) - \frac{\norm{f-f^*}_{L_P^2}^2}{(4\theta_0)^2}\\
&\leqslant \left(2\eps - \frac{1}{16\theta_0^2}\right)\norm{f-f^*}_{L_P^2}^2\leqslant \left(2\eps - \frac{1}{16\theta_0^2}\right) r^2(\rho_K)\enspace, 
\end{align*}where we used that $2\eps \leqslant (16\theta_0)^{-2}$ when $c\geqslant32$ in the last inequality. Plugging the last result in \eqref{eq:F3-main} we get
\begin{align*}
T_{K, \lambda}(g, f^*) &\leqslant \frac{\norm{g-f^*}}{ \rho_K}\left(\left(2\eps-\frac{1}{16\theta^2_0}\right)  r^2( \rho_K) + \lambda\rho_K\right)+\lambda\frac{ \kappa\rho_K}{10}\\
&\leqslant \frac{\norm{g-f^*}}{ \rho_K} \left((2+c^\prime)\eps-\frac{1}{16\theta_0^2}\right)  r^2( \rho_K) + \frac{c^\prime\kappa\eps}{10}r^2( \rho_K)\leqslant \kappa\left(\pa{2+\frac{11c^\prime }{10}}\eps-\frac{1}{16\theta_0^2}\right)  r^2( \rho_K)
\end{align*}when $16(2+c^\prime)\eps\leqslant \theta_0^{-2}$.

\subsection{From a control of $\Crit{K,\lambda}{\hat f}$ to statistical performance} 
\label{sec:EndProof1}
The proof follows essentially the one of \cite[Theorem 3.2]{LM_reg_comp} or \cite[Lemma 2]{MOM1}. 
\begin{Lemma}\label{lem:Conclusion}
 Let $\hat f\in F$ be such that, on $\Omega(K)$, $\Crit{K,\lambda}{\hat f}\leqslant (2+c^\prime) \eps r^2( \rho_K)$. Then, on $\Omega(K)$, $\hat f$ satisfies
  \[
 \norm{\hat f-\bayes}\leqslant 2\rho_K,\quad  \norm{\hat f-\bayes}_{L^2_P}\leqslant r(2\rho_K)\quad \text{and}\quad R(\hat f)\leqslant R(\bayes)+ (1+(4+3 c^\prime)\eps) r^2(2\rho_K)\enspace,
 \]when $c^\prime = 16$ and $c>832$.
\end{Lemma}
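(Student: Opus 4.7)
The plan is to first localize $\hat f$ in the set $F_1^{(2)}$ (which simultaneously gives the first two conclusions) and then deduce the excess risk bound via a MOM-to-expectation transfer on good blocks. The central tool is the specialization of the criterion bound $\Crit{K,\lambda}{\hat f}\leq(2+c^\prime)\eps r^2(\rho_K)$ to the test function $g=\bayes$, which reads
\[
\MOM{K}{(\hat f-\bayes)^2 - 2\zeta(\hat f-\bayes)} + \lambda(\norm{\hat f}-\norm{\bayes}) \leq (2+c^\prime)\eps r^2(\rho_K);
\]
combined with the quantile inequality $Q_{1/2}(a-b)\geq Q_{1/4}(a)-Q_{3/4}(b)$, this gives the working inequality
\[
Q_{1/4}((\hat f-\bayes)^2) - Q_{3/4}(2\zeta(\hat f-\bayes)) + \lambda(\norm{\hat f}-\norm{\bayes}) \leq (2+c^\prime)\eps r^2(\rho_K). \qquad (\ast)
\]
I then argue by contradiction that $\hat f\notin F_2^{(2)}\cup F_3^{(2)}$, so that necessarily $\hat f\in F_1^{(2)}$.

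For $\hat f\in F_2^{(2)}$, i.e. $\norm{\hat f-\bayes}\leq 2\rho_K$ and $\norm{\hat f-\bayes}_{L^2_P}>r(2\rho_K)\geq r_Q(2\rho_K,\gamma_Q)$, I apply \eqref{eq:LBQPReg} at $\rho=2\rho_K$ to lower-bound the quadratic quantile by $\norm{\hat f-\bayes}_{L^2_P}^2/(16\theta_0^2)$, \eqref{eq:UBPMNC} to upper-bound the multiplier quantile by $2\eps\norm{\hat f-\bayes}_{L^2_P}^2$ (the $L^2_P$-norm dominating the maximum), and the triangle inequality to obtain $\lambda(\norm{\hat f}-\norm{\bayes})\geq -2\lambda\rho_K=-2c^\prime\eps r^2(\rho_K)$. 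Substitution into $(\ast)$ forces $\norm{\hat f-\bayes}_{L^2_P}^2\leq \tfrac{16(2+3c^\prime)}{c-32}r^2(\rho_K)$, which reduces to $\leq r^2(\rho_K)\leq r^2(2\rho_K)$ for the stated values $c^\prime=16$ and $c>832$, contradicting the defining inequality of $F_2^{(2)}$.

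For $\hat f\in F_3^{(2)}$, i.e. $\norm{\hat f-\bayes}>2\rho_K$, I apply Lemma~\ref{lem:Homogeneity} at $\rho=\rho_K$ (this choice of $\rho$ is crucial): let $\tilde f=\bayes+(\rho_K/\norm{\hat f-\bayes})(\hat f-\bayes)$ and $\Upsilon=\norm{\hat f-\bayes}/\rho_K>2$. Homogeneity converts the quadratic and multiplier quantiles at $\hat f$ into $\Upsilon^2$ and $\Upsilon$ times the corresponding quantiles at $\tilde f$, while Lemma~\ref{lem:LBSubGrad} gives $\norm{\hat f}-\norm{\bayes}\geq\Upsilon\sup_{z^*\in\Gamma_{\bayes}(\rho_K)}z^*(\tilde f-\bayes)-\rho_K/10$. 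Using $\Upsilon^2\geq\Upsilon$ on the nonnegative quadratic quantile and dividing by $\Upsilon$, $(\ast)$ becomes
\[
\Upsilon\bigl[Q_{1/4}((\tilde f-\bayes)^2) - Q_{3/4}(2\zeta(\tilde f-\bayes)) + \lambda\sup_{z^*\in\Gamma_{\bayes}(\rho_K)}z^*(\tilde f-\bayes)\bigr] \leq (2+c^\prime)\eps r^2(\rho_K) + \lambda\rho_K/10.
\]
If $\tilde f\in H_{\rho_K}$ the sparsity equation gives $\sup z^*(\tilde f-\bayes)\geq 4\rho_K/5$, and combined with $Q_{1/4}((\tilde f-\bayes)^2)\geq 0$ and $Q_{3/4}(2\zeta(\tilde f-\bayes))\leq 2\eps r^2(\rho_K)$ the bracket is $\geq (4c^\prime/5-2)\eps r^2(\rho_K)$; otherwise \eqref{eq:LBQPReg} at $\rho_K$ yields $Q_{1/4}((\tilde f-\bayes)^2)\geq r^2(\rho_K)/(16\theta_0^2)$ and the bracket is again strictly positive. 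In either subcase, multiplying by $\Upsilon>2$ strictly exceeds the right-hand side for $c^\prime=16$ and $c>832$, yielding the contradiction.

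For the excess risk, the decomposition $R(\hat f)-R(\bayes)=\norm{\hat f-\bayes}_{L^2_P}^2-2P\zeta(\hat f-\bayes)$ bounds the first summand by $r^2(2\rho_K)$. For the second summand, \eqref{eq:ContMultReg} on at least $3K/4$ good blocks combined with assumption \eqref{eq:robust_theo_basic} yields $|P_{B_k}[2\zeta(\hat f-\bayes)]-P[2\zeta(\hat f-\bayes)]|\leq 2\eps r^2(2\rho_K)$ on these blocks, so that $P[2\zeta(\hat f-\bayes)]$ is sandwiched between $Q_{3/4,K}(2\zeta(\hat f-\bayes))-2\eps r^2(2\rho_K)$ and $Q_{1/4,K}(2\zeta(\hat f-\bayes))+2\eps r^2(2\rho_K)$; together with \eqref{eq:UBPMNC} applied symmetrically to $\pm\zeta$ (forcing $|Q_{\alpha,K}(2\zeta(\hat f-\bayes))|\leq 2\eps r^2(2\rho_K)$), one obtains $|P[2\zeta(\hat f-\bayes)]|\leq 4\eps r^2(2\rho_K)$ and hence $-2P\zeta(\hat f-\bayes)\leq 4\eps r^2(2\rho_K)$, which combined with the $L^2_P$ bound delivers $R(\hat f)\leq R(\bayes)+(1+(4+3c^\prime)\eps)r^2(2\rho_K)$ after absorbing residual regularization corrections. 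The main obstacle is the case $\hat f\in F_3^{(2)}$, because naively homogenizing at $\rho=2\rho_K$ would yield only $\Upsilon>1$, leaving insufficient margin to overcome the $\lambda\rho_K/10$ correction from Lemma~\ref{lem:LBSubGrad}; the resolution is to homogenize at $\rho=\rho_K$, where $\Upsilon>2$ provides the necessary slack to force the contradiction.
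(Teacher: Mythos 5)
Your localization step is sound, and it is essentially the paper's own route in mirrored form: instead of invoking Lemma~\ref{lem:BoundSup} with $\kappa=2$ together with $T_{K,\lambda}(g,\bayes)\geqslant -T_{K,\lambda}(\bayes,g)$, you re-derive its content for the single function $\hat f$ by lower-bounding $T_{K,\lambda}(\bayes,\hat f)$ on $F_2^{(2)}$ and $F_3^{(2)}$ from \eqref{eq:LBQPReg}, \eqref{eq:UBPMNC}, Lemmas~\ref{lem:LBSubGrad} and~\ref{lem:Homogeneity} and the sparsity equation; the constants check out for $c^\prime=16$, $c>832$ (your $F_2^{(2)}$ computation is in fact where $c>832$ comes from), and homogenizing at $\rho_K$ so that $\Upsilon>2$ is exactly the device used in the paper.

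The excess-risk step, however, has a genuine gap. You claim that \eqref{eq:UBPMNC} ``applied symmetrically to $\pm\zeta$'' forces $|Q_{\alpha,K}(2\zeta(\hat f-\bayes))|\leqslant 2\eps r^2(2\rho_K)$. But \eqref{eq:UBPMNC} is intrinsically one-sided: its derivation from \eqref{eq:ContMultReg} and \eqref{eq:robust_theo_basic} uses $P[2\zeta(f-\bayes)]\leqslant 0$, which is the first-order optimality of $f^*$ over the convex class $F$ (nearest point theorem). A ``$-\zeta$'' version would require $P[\zeta(\hat f-\bayes)]= 0$, which is not available in this agnostic setting (think of $F$ a segment whose direction is negatively correlated with the noise: then $-P[\zeta(\hat f-\bayes)]$ can be of order $\sigma\norm{\hat f-\bayes}_{L^2_P}\sim \sigma r(2\rho_K)$, far larger than $\eps r^2(2\rho_K)$, so the $L^2_P$ localization alone can never give the claimed two-sided quantile bound). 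Moreover, \eqref{eq:ContMultReg} and \eqref{eq:robust_theo_basic} only say that the block means, hence the quartiles, concentrate around $P[2\zeta(\hat f-\bayes)]$ itself; deducing $|P[2\zeta(\hat f-\bayes)]|\lesssim \eps r^2(2\rho_K)$ from your sandwich is therefore circular, since the lower end of the sandwich is controlled by the very quantity you are trying to bound. The missing ingredient is the criterion information, which you already have in $(\ast)$: since $Q_{1/4,K}((\hat f-\bayes)^2)\geqslant 0$ and $\lambda(\norm{\hat f}-\norm{\bayes})\geqslant -2\lambda\rho_K=-2c^\prime\eps r^2(\rho_K)$, inequality $(\ast)$ gives $Q_{3/4,K}(2\zeta(\hat f-\bayes))\geqslant -(2+3c^\prime)\eps r^2(\rho_K)$; then the transfer on the at least $3K/4$ good blocks gives $P[2\zeta(\hat f-\bayes)]\geqslant Q_{3/4,K}(2\zeta(\hat f-\bayes))-2\eps r^2(2\rho_K)$, hence $P[-2\zeta(\hat f-\bayes)]\leqslant (4+3c^\prime)\eps r^2(2\rho_K)$ and $R(\hat f)\leqslant R(\bayes)+(1+(4+3c^\prime)\eps)r^2(2\rho_K)$. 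This is exactly the paper's chain \eqref{eq:ToutEstDansLeTestReg} combined with $T_{K,\lambda}(\bayes,\hat f)\leqslant \Crit{K,\lambda}{\hat f}$, and it recovers the stated constant without any appeal to a symmetric multiplier bound.
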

\begin{proof}
Recall that for any $x\in\R^K$, $Q_{1/2}(x)\geqslant - Q_{1/2}(-x)$. Therefore,  
\[
\Crit{K,\lambda}{\hat f}=\sup_{g\in F}T_{K,\lambda}(g,\hat f)\geqslant T_{K,\lambda}(\bayes,\hat f)\geqslant -T_{K,\lambda}(\hat f,\bayes)\enspace.
\]
Thus, on $\Omega(K)$,
$\hat f\in\left\{g\in F : T_{K,\lambda}(g,\bayes)\geqslant -(2+c^\prime) \eps r^2( \rho_K)\right\}$. When $c^\prime = 16$ and $c>832$,  
\begin{equation*}
-(2+c^\prime)\eps > 2(1+c^\prime)\eps - \frac{1}{16\theta_0^2}  \mbox{ and } -(2+c^\prime)\eps> 2\max\left(2\eps - \frac{1}{16 \theta_0^2} + \frac{11c^\prime\eps}{10}, 2\eps - \frac{7 c^\prime\eps}{10}\right)
\end{equation*}therefore, $\hat f\in F_1^{(2)}$ on $\Omega(K)$. This yields the results for both the regularization and the $L^2_P$-norm. 

Finally, let us turn to the control on the excess risk. It follows from \eqref{eq:ToutEstDansLeTestReg} for $\rho=\kappa\rho_K$ that
\begin{align*}
&R(\hat f)-R(\bayes)= \norm{\hat f-\bayes}_{L^2_P}^2+P[-2\zeta(\hat f-\bayes)]\\
&\leqslant r^2(2\rho_K)+ T_{K,\lambda}(\bayes, \hat f)+2\eps\max\left(r_M^2(2 \rho_K, \gamma_M), \frac{384 \theta_m^2}{\eps^2}\frac{K}{N}, \norm{\hat f-f^*}_{L_p^2}^2\right)+2\lambda\rho_K\\
&\leqslant  r^2(2\rho_K) + \Crit{K,\lambda}{\hat f} + 2 \eps r^2(2\rho_K) +  2 c^\prime \eps r^2(\rho_K) = (1+(4+3 c^\prime)\eps) r^2(2\rho_K) \enspace.
\end{align*}
\end{proof}

\subsection{End of the proof of Theorem~\ref{thm:RBRhoEstPen}}
By definition of $\ERM{K,\lambda}$,
\begin{align*}
 \Crit{K,\lambda}{\ERM{K,\lambda}}&\le\Crit{K,\lambda}{\bayes}=\sup_{g\in F}T_{K, \lambda}(g,f^*)\le\max_{i\in[3]}\sup_{g\in F^{(1)}_i} T_{K, \lambda}(g,f^*),
\end{align*}where $\{F^{(1)}_1, F^{(1)}_2, F^{(1)}_3\}$ is the decomposition of $F$ as in Figure~\ref{fig:partition_set_F_positive_excess_loss}. It follows from Lemma~\ref{lem:BoundSup} (for $\kappa=1$) that on the event $\Omega(K)$,
\[
 \Crit{K,\lambda}{\ERM{K,\lambda}}\leqslant (2+c^\prime)\eps r^2( \rho_K)\enspace.
\]
Therefore, for $c^\prime = 16$ and $c=833$ the conclusion of the proof of Theorem~\ref{thm:RBRhoEstPen} follows from Lemma~\ref{lem:Conclusion}.

\subsection{Proof of Theorem~\ref{thm:LepskiReg}}
Define
\begin{equation*}
K_1 = \frac{|\cO|}{1-\gamma} = 8 |\cO| \mbox{ and } K_2 = \frac{N \alpha}{(2\theta_0\theta_{r0})^2} = \frac{N}{96 (\theta_0\theta_{r0})^2}.
\end{equation*}

Let $K\in [K_1,K_2]$ and let $\Omega_{K,c_{ad}}=\{\bayes\in  \cap_{J=K}^{K_2}\hat R_{J,c_{ad}}\}$ where we recall that $\hat R_{J,c_{ad}} = \{f\in F: \cC_{J, \lambda}(f)\leqslant (c_{ad}/\theta_0^2)r^2(\rho_J)\}$. Lemma~\ref{lem:BoundSup} (for $\kappa=1$) shows that, for $c_{ad}=(2+c^\prime)/c$, $\Omega_{K,c_{ad}}\supset \cap_{J=K}^{K_2}\Omega(J)$. Therefore, on $\cap_{J=K}^{K_2}\Omega(J)$, $\hat K_{c_{ad}}\leqslant K$ which implies that $\ERM{c_{ad}}\in \hat R_{K,c_{ad}}$. By Lemma~\ref{lem:Conclusion} (for $c^\prime = 16$ and $c=833$), this implies that 
\[
\norm{\ERM{c_{ad} }-\bayes}\leqslant 2\rho_K,\qquad \norm{\ERM{c_{ad} }-\bayes}_{L^2_P}\leqslant r(2\rho_K)\quad\text{and}\quad R(\ERM{c_{ad}})\leqslant R(\bayes)+(1+(4+3 c^\prime)\eps) r^2(2\rho_K)\enspace.
\]


\appendix

%

\section{Simulation study} 
\label{sec:simulation_study}
The aim of this section is to show that the min-max procedure introduced in this work can be computed using alternating descent-ascent algorithms. It appears that all the following algorithms can be recast as \textbf{block gradient descent (BGD)} but the major difference with the classical BGD is that blocks are chosen according to their ``centrality'' via the median operator instead of iterating over all the blocks (resp. at random) in the classical (resp. stochastic) BGD approach. 

We test our algorithms in a high-dimensional framework. In this setup, the $\ell_1$-norm has played a prominent role through the LASSO estimator. There has been a huge number of algorithms designed to implement the LASSO. The aim of this section is to show that there is a natural equivalent formulation to all of these algorithms for the MOM-LASSO that makes them more robust to outliers as can be appreciated in Figure~\ref{fig:robustness}. The choice of hyper-parameters like the number of blocks or the regularization parameter cannot be done via classical Cross-Validation approaches because of the potential presence of outliers in the test sets, CV procedures are adapted using MOM estimators.  We also advocate for using random blocks at every iterations of the algorithms. This bypasses a problem of ``local saddle points'' we have identified. A by product of the latter approach is a definition of depth adapted to the learning task and therefore of an outliers detection algorithm.

\subsection{Data generating process and corruption by outliers} 
\label{sub:data_generating_process_and_corruption_by_outliers}
We study the performance of all the algorithms on a dataset corrupted by outliers of various forms. The ``basic'' set of ``informative data'' is called $\cD_1$. This set is corrupted by various datasets of outliers, named $\cD_2, \cD_3$, $\cD_4$ and $\cD_5$. Good and bad data have been merged and shuffled in the dataset $\cD= \cD_1 \cup \cD_2\cup \cD_3\cup \cD_4$ given to the statistician. 
Let us now detail the construction of these datasets.
\begin{enumerate}
	\item The set $\cD_1$ of ``informative data'' is a set of $N_{good}$ i.i.d. data $(X_i, Y_i)$ with common distribution
	\begin{equation}\label{eq:gauss_model_simus}
	Y= \inr{X, t^*} + \zeta \enspace,
	\end{equation}where $t^*\in\R^d$,  $X\sim \cN(0, I_{d\times d})$ and $\zeta\sim \cN(0, \sigma^2)$ is independent of $X$.
	\item $\cD_2$ is a dataset of  $N_{bad-1}$ ``bad data'' $(X_i, Y_i)$ such that $Y_i=1$ and $X_i = (1)_{j=1}^d$ 
	\item $\cD_3$ is a dataset of  $N_{bad-2}$ ``bad data'' $(X_i, Y_i)$ such that $Y_i=10000$ and $X_i = (1)_{j=1}^d$
	\item $\cD_4$ is a dataset of $N_{bad-3}$ ``bad data'' $(X_i, Y_i)$ where $Y_i$ is a $0-1$-Bernoulli random variable and $X_i$ is uniformly distributed over $[0,1]^d$,
	\item $\cD_5$ is also a set of ``outliers'' that have been generated according to a linear model as in  \eqref{eq:gauss_model_simus} (i.e. with the same target vector $t^*$) but for a different choice of design $X$ and noise $\zeta$. Here, we take $X\sim \cN(0, \Sigma)$ where $\Sigma = (\rho^{|i-j|})_{1\leqslant i, j \leqslant d}$ and $\zeta$ is a heavy-tailed noise distributed according to a Student distribution with various degrees of freedom.
\end{enumerate}These different types of ``outliers'' $\cD_j, j=2, 3, 4, 5$ have been chosen to illustrate that the theory allows for outliers that may have absolutely nothing to do with the oracle $t^*$ that can be neither independent nor random as illustrated by datasets $\cD_2$ and $\cD_3$.

\subsection{From algorithms for the LASSO to their ``MOM versions''} 
\label{sub:algorithms_and_their}
There is a huge literature presenting many algorithms to implement the LASSO procedure. We explore several of them to investigate their performance regarding robustness. 

Each algorithm designed for the LASSO can be transformed into an algorithm for the min-max estimator we have been studying in this work. Let us now explain how this can be done. 
Recall that MOM version of the LASSO estimator is 
\begin{equation}\label{eq:esti}
\hat t_{K, \lambda} \in \argmin_{t\in \R^d} \sup_{t^\prime\in \R^d} T_{K, \lambda}(t^\prime, t)
\end{equation}where $
T_{K, \lambda}(t^\prime, t) = \MOM{K}{\ell_t-\ell_{t^\prime}} + \lambda \left(\norm{t}_1 - \norm{t^\prime}_1\right)$, $\MOM{K}{\ell_t-\ell_{t^\prime}}$ is a median of the set of real numbers $\{P_{B_1}(\ell_t-\ell_{t^\prime}), \cdots, P_{B_K}(\ell_t-\ell_{t^\prime})\}$ and for all $k\in[K]$,
\begin{equation*}
 P_{B_k}(\ell_t-\ell_{t^\prime}) = \frac{1}{|B_k|}\sum_{i\in B_k} (Y_i-\inr{X_i, t})^2 - (Y_i-\inr{X_i, t^\prime}))^2.
 \end{equation*}

A natural idea to implement \eqref{eq:esti} is to consider algorithms based on a sequence of alternating descents (in $t$) and ascents (in $t^\prime$) steps with or without a ``proximal/projection'' step and for various choices of ``step sizes''.  A key issue here is that the ``marginal'' functions $t\to T_{K, \lambda}(t^\prime_0, t)$ and $t^\prime\to T_{K, \lambda}(t^\prime, t_0)$, for some given $(t_0,t_0^\prime)$, may not be convex. Nevertheless, one can still locally compute the steepest descent by assuming that the index in $[K]$ of the block achieving the median in $\MOM{K}{\ell_{t_0}-\ell_{t^\prime_0}}$ remains constant on a convex open set containing $(t_0, t^\prime_0)$.  


\begin{Assumption}\label{assum:partition}
Almost surely (with respect to $(X_i,Y_i)_{i=1}^N$) for almost all $(t_0, t^\prime_0)\in\R^d\times \R^d$ (with respect to the Lebesgue measure on $\R^d\times \R^d$), there exists a convex open set $B$ containing $(t_0,t^\prime_0)$ and $k\in[K]$ such that for all $(t,t^\prime)\in B$, $P_{B_k}(\ell_{t}-\ell_{t^\prime})\in \MOM{K}{\ell_{t}-\ell_{t^\prime}}$.
\end{Assumption}

Under Assumption~\ref{assum:partition}, for $\lambda\otimes\lambda$-almost all couples $(t_0,t^\prime_0)\in\R^d\times \R^d$ (where $\lambda$ is the Lebesgue measure on $\R^d$), $t\to T_{K, \lambda}(t_0^\prime, t)$ is ``locally strictly convex'' and $t^\prime\to T_{K, \lambda}(t^\prime, t_0)$ is ``locally strictly concave''. Therefore, for the choice of index $k$ such that $P_{B_k}(\ell_{t_0}-\ell_{t_0^\prime})\in \MOM{K}{\ell_{t_0}-\ell_{t_0^\prime}}$, we have
 \begin{equation}\label{eq:differential_in_t}
  \nabla_t \MOM{K}{\ell_t-\ell_{t^\prime_0}}_{|t=t_0}  = -2 (X^{(k)})^\top(Y^{(k)}-X^{(k)}t_0)
  \end{equation} where $Y^{(k)}= (Y_i)_{i\in B_k}$ and $X^{(k)}$ is the $|B_k|\times d$ matrix with rows given by $X_i^\top$ for $i\in B_k$.  The integer $k\in[K]$ is the index of the median of $K$ real numbers $P_{B_1}(\ell_t-\ell_{t^\prime}), \cdots, P_{B_K}(\ell_t-\ell_{t^\prime})$, which is straightforward to compute. The gradient $-2 (X^{(k)})^\top(Y^{(k)}-X^{(k)}t_0)$ in \eqref{eq:differential_in_t} depends on $t_0^\prime$ only through the index $k$. 

\begin{Remark}[Block Gradient Descent and map-reduce]Algorithms developed for the minmax estimator can be interpreted as block gradient descent. The major difference with the classical Block Gradient descent (which takes sequentially all the blocks one after another), is that the index of the block is chosen here at each round according to a ``centrality measure'': the index $k\in[K]$ of the block $B_k$ chosen to make one more descent / ascent step satisfies $P_{B_k}(\ell_{t_0}-\ell_{t_0^\prime})\in \MOM{K}{\ell_{t_0}-\ell_{t_0^\prime}}$. In particular, we expect blocks corrupted by outliers not to be chosen whereas in the classical BGD all blocks are chosen at each pass. Moreover, by choosing the ``descent / ascent'' block $k$ using this centrality measure we also expect $P_{B_k}(\ell_{t_0} - \ell_{t_0^\prime})$ to be close to the true expectation $P(\ell_{t_0} - \ell_{t_0^\prime})$ which is ultimately the function we would like to know. This makes every descent  (resp. ascent) steps particularly efficient since the right descent (resp. ascent) direction is $ - \nabla_t P(\ell_t-\ell_{t_0^\prime})_{|t=t_0}$ (resp. $\nabla_{t^\prime} P(\ell_{t_0}-\ell_{t^\prime})_{|t^\prime=t_0^\prime}$).

Moreover, our algorithms particularly fits the ``big data'' framework which is our original motivation for the introduction of robust procedures in machine learning. In this framework, the map-reduce paradigm has emerged as a leading idea to design procedures. In this situation, the data are spread out in  a cluster of servers and are therefore naturally split into blocks. Then our procedures simply uses for \textit{mapper} a mean function and for \textit{reducer} a  median function. This makes our algorithms easily scalable into the big data framework even when some servers have crashed down (making outliers data). MOM algorithm could therefore reduced the maintenance of big clusters.     
\end{Remark}

\begin{Remark}[Normalization]In the classical i.i.d. setup, the design matrix $\bX$ (i.e.,  the $N\times d$ matrix with row vectors $X_1, \ldots, X_N$) is usually normalized so that the $\ell_2^N$-norms of the columns equal to one. In our corrupted setup, we cannot normalize the design matrix this way because one row of $\bX$ may be corrupted. In that case, normalizing each column of $\bX$ would corrupt the entire matrix $\bX$. $\bX$ has to be kept as it is in all simulations.
\end{Remark}

In the sequel, we use this strategy to transform several algorithms implemented for the LASSO into algorithms for the min-max estimator \eqref{eq:esti}. We first start with the subgradient descent algorithm.

\subsection{Subgradient descent algorithm} 
\label{sub:subgradient_descent_algorithm}
The LASSO is solution of the minimization problem $\min_{t\in \R^d} F(t)$ where $F$ is defined for all $t\in\R^d$ by $F(t) = \norm{\bY-\bX t}_{2}^2 + \lambda \norm{t}_1$ with $\bY=(Y_i)_{i=1}^N$ and $\bX$ is the $N\times d$ matrix with row vectors $X_1, \ldots, X_N$. The LASSO can be approximated using a subgradient descent procedure : given a starting point $t_0\in\R^d$ and $(\gamma_p)_p$ a sequence of step sizes (i.e. $\gamma_p>0$ and $(\gamma_p)_p$ decreases), at step $p$ we update
\begin{equation}\label{eq:sub_grad_descent}
 t_{p+1} = t_p -\gamma_p \partial F(t_p)
 \end{equation} where $\partial F(t_p)$ is a subgradient of $F$ at $t_p$, for instance, $\partial F(t_p) = -2\bX^\top(\bY-\bX t_p) + \lambda {\rm sign}(t_p)$ where ${\rm sign}(t_p)$ is the vector of signs of the coordinates of $t_p$ with the convention ${\rm sign}(0)=0$. The sub-gradient descent algorithm \eqref{eq:sub_grad_descent} can be turned into an alternating subgradient ascent/descent algorithm for the min-max estimator
\begin{equation*}
\hat t_{K, \lambda} \in \argmin_{t\in\R^d} \sup_{t^\prime\in\R^d}\left(\MOM{K}{\ell_t-\ell_{t^\prime}} + \lambda \left(\norm{t}_1 - \norm{t^\prime}_1\right)\right)\enspace.
\end{equation*} Let 
\begin{equation}\label{eq:notation_data_blocks}
\bY_k = (Y_i)_{i\in B_k} \mbox{ and } \bX_k=(X_i^\top)_{i\in B_k}\in\R^{|B_k|\times d}\enspace.
\end{equation}

\begin{algorithm}[H]\label{algo:mom_sub_descent}
\SetKwInOut{Input}{input}\SetKwInOut{Output}{output}\SetKw{Or}{or}
\SetKw{Return}{Return}
\Input{$(t_0, t_0^\prime)\in\R^d\times \R^d$ : initial point\\ 
$\eps>0$ : a stopping criteria\\ 
$(\eta_p)_p, (\beta_p)_p$: two step size sequences}
\Output{approximated solution to the min-max problem \eqref{eq:esti}}  
\BlankLine
 \While{$\norm{t_{p+1}-t_p}_2\geqslant \eps$ \Or $\norm{t^\prime_{p+1}-t^\prime_p}_2\geqslant \eps$}{
  find $k\in[K]$ such that $\MOM{K}{\ell_{t_p^\prime} - \ell_{t_p}} = P_{B_k}(\ell_{t_p} - \ell_{t_p^\prime})$\\
   $$t_{p+1} = t_{p} + 2\eta_p\bX_k^\top(\bY_k-\bX_k t_p) - \lambda \eta_p {\rm sign}(t_{p})$$\\
   find $k\in[K]$ such that $\MOM{K}{\ell_{t_p^\prime} - \ell_{t_{p+1}}} = P_{B_k}(\ell_{t_{p+1}} - \ell_{t_p^\prime})$\\
    $$t_{p+1}^\prime = t_{p}^\prime  + 2\beta_p\bX_k^\top(\bY_k-\bX_kt_p^\prime) - \lambda \beta_p {\rm sign}(t_p^\prime)$$}
 \Return $(t_p, t_p^\prime)$
 \caption{An alternating sub-gradient descent algorithm for the minimax MOM estimator \eqref{eq:esti}.}
\end{algorithm} 

\vspace{0.8cm} 
The key insight in Algorithm~\ref{algo:mom_sub_descent} are step~2 and step~4 where the blocks number have been chosen according to their centrality  among the other blocks via the median operator. Those steps are expected 1) to remove outliers from the descent / ascent directions 2) to improve the accuracy of the latter directions. 

A classical choice of step size $\gamma_p$ in \eqref{eq:sub_grad_descent} is $\gamma_p = 1/L$ where $L = \norm{\bX}_{S_\infty}^2$ ($\norm{\bX}_{S_\infty}$ is the operator norm of $\bX$). Another possible choice follows from the Armijo-Goldstein condition with the following backtracking line search: $\gamma$ is decreased geometrically while the Armijo-Goldstein condition is not satisfied
\begin{equation}\label{eq:backtracking_line_search}
 \mathbf{ while }\quad  F(t_p + \gamma_\ell \partial F(t_p)) > F(t_p) + \delta\gamma_\ell \norm{\partial F(t_p)}_2^2 \quad \mathbf{ do } \quad \gamma_{\ell+1} = \rho \gamma_\ell
 \end{equation} for some given $\rho \in(0, 1)$, $\delta=10^{-4}$ and initial point $\gamma_0=1$. 

 Of course, the same choice of step size can be made as well for $(\eta_p)_p$ and $(\beta_p)_p$ in Algorithm~\ref{algo:mom_sub_descent}. In the first case, one can take $\eta_p =  1 / \norm{\bX_k}_{S_\infty}^2$ where $k\in[K]$ is the index defined in line \texttt{2} of Algorithm~\ref{algo:mom_sub_descent} and $\beta_p  = 1 / \norm{\bX_k}_{S_\infty}^2$ where $k\in[K]$ is the index defined in line \texttt{4} of Algorithm~\ref{algo:mom_sub_descent}. In the other backtracking line search case, the Armijo-Goldstein condition adapted for Algorithm~\ref{algo:mom_sub_descent} reads like
\begin{equation}\label{eq:backtracking_line_search}
 \mathbf{ while }\quad  F_k(t_p + \gamma_\ell \partial F_k(t_p)) > F_k(t_p) + \delta\gamma_\ell \norm{\partial F_k(t_p)}_2^2 \quad \mathbf{ do } \quad \eta_{\ell+1} = \rho \eta_\ell
 \end{equation}where $F_k(t) = \norm{\bY_k-\bX_k t}_2^2 + \lambda \norm{t}_1$ where $k\in[K]$ is the index defined in line \texttt{2} of Algorithm~\ref{algo:mom_sub_descent} and a similar update follows for $\beta_p$ with an index  $k\in[K]$ as defined in line \texttt{4} of Algorithm~\ref{algo:mom_sub_descent}.


\subsection{Proximal gradient descent algorithms} 
\label{sub:proximal_gradient_descent_algorithms}
In this section, we provide a MOM version of the classical ISTA and FISTA algorithms. Recall that ISTA (and its accelerated version FISTA) are proximal gradient descent that fall in the general class of splitting algorithms where one usually decomposes the objective function $F(t) = f(t) + g(t)$ with $f(t)= \norm{\bY-\bX t}_2^2$ (convex and differentiable) and $g(t) = \lambda \norm{t}_1$ (convex). 

ISTA stands for Iterative Shrinkage-Thresholding Algorithm. It  is a splitting algorithm that alternates between a descent step in the direction of the gradient and a ``projection step'' through the proximal operator of $g$ which is the soft-thresholding operator in the case of the $\ell_1$-norm : at step $p$
\begin{equation}\label{eq:ista}
   t_{p+1} = {\rm prox}_{\lambda \norm{\cdot}_1} \left(t_p + 2 \gamma_p \bX^\top (\bY - \bX t_p)\right) 
 \end{equation} where ${\rm prox}_{\lambda \norm{\cdot}_1}(t) = ({\rm sign}(t_j)\max(|t_j|-\lambda, 0))_{j=1}^d$ for all $t=(t_j)_{j=1}^d\in\R^d$.
A natural candidate for \eqref{eq:esti} is given by the following alternating method.

  \vspace{0.6cm}

\begin{algorithm}[H]\label{algo:prox_desc_grad}
\SetKwInOut{Input}{input}\SetKwInOut{Output}{output}\SetKw{Or}{or}
\SetKw{Return}{Return}
\Input{$(t_0, t_0^\prime)\in\R^d\times \R^d$ : initial point\\ 
$\eps>0$ : a stopping criteria\\ 
$(\eta_k)_k, (\beta_k)_k$: two step size sequences}
\Output{approximated solution to the min-max problem \eqref{eq:esti}}  
\BlankLine
 \While{$\norm{t_{p+1}-t_p}_2\geqslant \eps$ \Or $\norm{t^\prime_{p+1}-t^\prime_p}_2\geqslant \eps$}{
  find $k\in[K]$ such that $\MOM{K}{\ell_{t_p^\prime} - \ell_{t_p}} = P_{B_k}(\ell_{t_p} - \ell_{t_p^\prime})$
   $$t_{p+1} = {\rm prox}_{\lambda \norm{\cdot}_1}\left(t_{p}  + 2\eta_k\bX_k^\top(\bY_k-\bX_k t_p) \right)$$\\
   find $k\in[K]$ such that $\MOM{K}{\ell_{t_p^\prime} - \ell_{t_{p+1}}} = P_{B_k}(\ell_{t_{p+1}} - \ell_{t_p^\prime})$
    $$t_{p+1}^\prime = {\rm prox}_{\lambda \norm{\cdot}_1}\left(t_{p}^\prime  + 2\beta_k\bX_k^\top(\bY_k-\bX_k t_p^\prime) \right)$$}
 \Return $(t_p, t_p^\prime)$
 \caption{An alternating proximal gradient descent for the minimaximization procedure  \eqref{eq:esti}.}
\end{algorithm} 

\vspace{0.8cm} 
Note that the step sizes sequences $(\eta_p)_p$ and $(\beta_p)_p$ may be chosen according to the remarks below Algorithm~\ref{algo:mom_sub_descent}.

\subsection{Douglas-Racheford / ADMM} 
\label{sub:douglas_racheford_admm}
In this  section, we consider the ADMM algorithm. ADMM stands for Alternating Direction Method of Multipliers. It is also a splitting algorithm which reads as follows in the LASSO case: at step $p$,
\begin{align}\label{eq:ADMM}
\nonumber t_{p+1} &= (\bX^\top \bX + \rho I_{d\times d})^{-1}(\bX^\top \bY +\rho z_p - u_p)\\
\nonumber z_{p+1} &= {\rm prox}_{\lambda \norm{\cdot}_1} \left( t_{p+1} + u_p/\rho\right)\\
u_{p+1} &= u_p + \rho(t_{p+1}-z_{p+1})
\end{align}where $\rho$ is some parameter to be chosen (for instance $\rho=10$). The ADMM algorithm returns $t_p$ after a stopping criteria is met. In Algorithm~\ref{algo:mom_admm}, we provide a MOM version of this algorithm.

\vspace{0.6cm}
\begin{algorithm}[H]\label{algo:mom_admm}
\SetKwInOut{Input}{input}\SetKwInOut{Output}{output}\SetKw{Or}{or}
\SetKw{Return}{Return}
\Input{$(t_0, t_0^\prime)\in\R^d\times \R^d$ : initial point\\ 
$\eps>0$ : a stopping criteria\\ 
$\rho$: a parameter}
\Output{approximated solution to the min-max problem \eqref{eq:esti}}  
\BlankLine
 \While{$\norm{t_{p+1}-t_p}_2\geqslant \eps$ \Or $\norm{t^\prime_{p+1}-t^\prime_p}_2\geqslant \eps$}{
  find $k\in[K]$ such that $\MOM{K}{\ell_{t_p^\prime} - \ell_{t_p}} = P_{B_k}(\ell_{t_p} - \ell_{t_p^\prime})$
  \begin{align*}
\nonumber t_{p+1} &= (\bX_k^\top \bX_k + \rho I_{d\times d})^{-1}(\bX_k^\top \bY_k +\rho z_p - u_p)\\
\nonumber z_{p+1} &= {\rm prox}_{\lambda \norm{\cdot}_1} \left( t_{p+1} + u_p/\rho\right)\\
u_{p+1} &= u_p + \rho(t_{p+1}-z_{p+1})
\end{align*}
   \\
   find $k\in[K]$ such that $\MOM{K}{\ell_{t_p^\prime} - \ell_{t_{p+1}}} = P_{B_k}(\ell_{t_{p+1}} - \ell_{t_p^\prime})$
    \begin{align*}
\nonumber t_{p+1}^\prime &= (\bX_k^\top \bX_k + \rho I_{d\times d})^{-1}(\bX_k^\top \bY_k +\rho z_p^\prime - u_p^\prime)\\
\nonumber z_{p+1}^\prime &= {\rm prox}_{\lambda \norm{\cdot}_1} \left( t_{p+1}^\prime + u_p^\prime/\rho\right)\\
u_{p+1}^\prime &= u_p^\prime + \rho(t_{p+1}^\prime-z_{p+1}^\prime)
\end{align*}
    }
 \Return $(t_p, t_p^\prime)$
 \caption{An ADMM algorithm for the minimaximization MOM estimator \eqref{eq:esti}.}
\end{algorithm} 

\vspace{0.8cm}

\subsection{Cyclic coordinate descent} 
\label{sub:cyclic_coordinate_descent}
Repeatedly minimizing the LASSO objective function $t\to \norm{\bY-\bX t}_2^2+\lambda \norm{t}_1$ w.r.t. to each coordinate $t_j$ has proved to be an efficient algorithm. The closed form solution (cf. for instance  \cite[p. 83]{MR3307991}) is given by 
\begin{equation*}
t_j = \frac{R_j}{\norm{\bX_{\cdot j}}_2^2} \left(1-\frac{\lambda}{ 2 |R_j|}\right)_{+} \mbox{ with } R_j = \bX_{\cdot j}^\top \left(\bY - \sum_{k\neq j} t_k \bX_{\cdot k}\right).
\end{equation*}We can adapt this algorithm to approximate the MOM estimator \eqref{eq:esti}. Denote by $\bX_{kj}$ the $j$-th column of $\bX_k$ for all $j\in[d]$.

\vspace{0.6cm}
\begin{algorithm}[H]\label{algo:mom_admm}
\SetKwInOut{Input}{input}\SetKwInOut{Output}{output}\SetKw{Or}{or}\SetKw{In}{in}
\SetKw{Return}{Return}
\Input{$(t_0, t_0^\prime)\in\R^d\times \R^d$ : initial point\\ 
$\eps>0$ : a stopping criteria}
\Output{approximated solution to the min-max problem \eqref{eq:esti}}  
\BlankLine
 \While{$\norm{t_{p+1}-t_p}_2\geqslant \eps$ \Or $\norm{t^\prime_{p+1}-t^\prime_p}_2\geqslant \eps$}{
  find $k\in[K]$ such that $\MOM{K}{\ell_{t_p^\prime} - \ell_{t_p}} = P_{B_k}(\ell_{t_p} - \ell_{t_p^\prime})$\\
\For{j \In $[d]$}{
	$t_{p+1, j} = \frac{R_{kj}}{\norm{\bX_{kj}}_2^2} \left(1-\frac{\lambda}{ 2 |R_{kj}|}\right)_{+} \mbox{ with } R_{kj} = \bX_{kj}^\top \left(\bY_k - \sum_{q< j} t_{p+1,q} \bX_{kq}-\sum_{q> j} t_{p,q} \bX_{kq}\right)$}
   find $k\in[K]$ such that $\MOM{K}{\ell_{t_p^\prime} - \ell_{t_{p+1}}} = P_{B_k}(\ell_{t_{p+1}} - \ell_{t_p^\prime})$\\
\For{j \In $[d]$}{
	$t^\prime_{p+1, j} = \frac{R_{kj}}{\norm{\bX_{kj}}_2^2} \left(1-\frac{\lambda}{ 2 |R_{kj}|}\right)_{+} \mbox{ with } R_{kj} = \bX_{kj}^\top \left(\bY_k - \sum_{q< j} t^\prime_{p+1,q} \bX_{kq}-\sum_{q> j} t^\prime_{p,q} \bX_{kq}\right)$}
    }

 \Return $(t_p, t_p^\prime)$
 \caption{A Cyclic Coordinate Descent algorithm for the minimax MOM estimator \eqref{eq:esti}.}
\end{algorithm} 

\subsection{Adaptive choice of hyper-parameters via MOM V-fold Cross Validation} 
\label{sub:adaptive_choice_of_the_number_k_of_blocks_via_mom_cv}
The choice of hyperparameters in a ``data corrupted" environment has to be done carefully. Classical Cross-validation methods cannot be used because of the potential presence of outliers in the dataset that are likely to corrupt the classical CV-criterion. We design new (empirical) criteria that trustfully reveal performance of estimators even in situations where ``test datasets" might have been corrupted.

MOM's principles can be combined with the idea of multiple splitting into training / test datasets in cross-validation. Let us now explain the construction of estimators $\hat f_{\hat K, \hat \lambda}$ where the number of blocks $\hat K$ and the regularization parameter $\hat \lambda$ are hyper-parameters learned via such version of the CV principle.

We are given an integer $V\in[N]$ such that $N$ is divided by $V$. We are also given two finite grids $\cG_K\subset[N]$ and $\cG_\lambda\subset (0, 1]$. Our aim is to chose the ``best'' numbers of blocks and best regularization parameter within both grids. The dataset is splitted into $V$ disjoints blocks $\cD_1, \ldots, \cD_V$. For each $v\in[V]$, $\cup_{u\neq v}\cD_u$ is used to train a family of estimators
\begin{equation}\label{eq:family_estimators}
\left(\hat f_{K, \lambda}^{(v)}: K\in \cG_K, \lambda\in \cG_\lambda\right).
\end{equation}The remaining $\cD_v$ of the dataset is used to test the performance of each estimator in the family~\eqref{eq:family_estimators}. Using these notations, we can define a MOM version of the cross-validation procedure.

\begin{Definition}\label{def:MOM-CV}
 The \textbf{Median of Means $V$-fold Cross Validation procedure} associated to the family of estimators~\eqref{eq:family_estimators} is $\hat f_{\hat K, \hat \lambda}$ where $(\hat K, \hat \lambda)$ is minimizing the ${\rm MomCv}_V$ criteria 
 \begin{equation*}
 (K, \lambda)\in\cG_K\times \cG_\lambda \to {\rm MomCv}_V(K, \lambda) = Q_{1/2}\left({\rm MOM}_{K^\prime}^{(v)}\left(\ell_{\hat f_{K, \lambda}^{(v)}}\right)_{v\in[V]}\right), 
  \end{equation*} where, for all $v\in[V]$ and $f\in F$, 
  \begin{equation}\label{eq:MOM-crit}
   {\rm MOM}_{K^\prime}^{(v)}\left(\ell_f\right) = {\rm MOM}_{K^\prime} \left(P_{B_1^{(v)}}\ell_f, \cdots, P_{B_{K^\prime}^{(v)}}\ell_f\right)
  \end{equation}and $B_1^{(v)}\cup \cdots, \cup B_{K^\prime}^{(v)}$ is a partition of the test set $\cD_v$ into $K^\prime$ blocks where $K^\prime\in [N/V]$ such that $K^\prime$ divides $N/V$.
 \end{Definition} 

 The difference between standard V-fold cross validation procedure and its MOM version in Definition~\ref{def:MOM-CV} is that empirical means on test sets $\cD_v$ in the classical V-fold CV procedure have been replaced by MOM estimators in \eqref{eq:MOM-crit}. Moreover, the mean over all  $V$ splits in the classical $V$-fold CV is replaced by a median. 

The choice of $V$ raises the same issues for MOM CV as for classical $V$-fold CV \cite{MR2602303, MR3595142}. In the simulations we use $V=5$. The construction of the MOM-CV requires to choose another parameter: $K^\prime$, the number of blocks used to construct the MOM criteria \eqref{eq:MOM-crit} over the test set. A possible solution is to take $K^\prime = K/V$. This has the advantage to make only one split of the dataset $\cD$ into $K$ blocks and then use for each of the $V$ rounds, $(V-1)K/V$ of these blocks to construct the family of estimators \eqref{eq:family_estimators} and then $K/V$ of these blocks to test them. 

In Figures~\ref{fig:MOM_CV_K} and~\ref{fig:MOM_CV_lambda}, hyper-parameters $K$ (i.e. the number of blocks) and $\lambda$ (i.e. the regularization parameter) have been chosen for the MOM LASSO estimator via the MOM V-fold Cross validation procedure introduced above. 

\begin{figure}[!h]
    \centering
    \begin{minipage}{.45\textwidth}
    \includegraphics[width=1.1\textwidth]{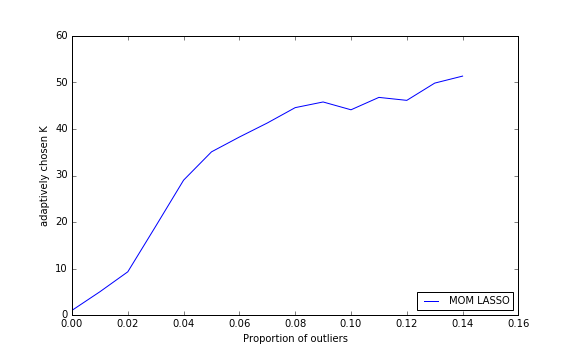}
    \caption{Adaptively chosen number of blocks $K$ for the MOM LASSO.}
    \label{fig:MOM_CV_K}
    \end{minipage}%
\hspace{0.2cm}
\begin{minipage}{.45\textwidth}
  \centering
    \includegraphics[width=1.1\textwidth]{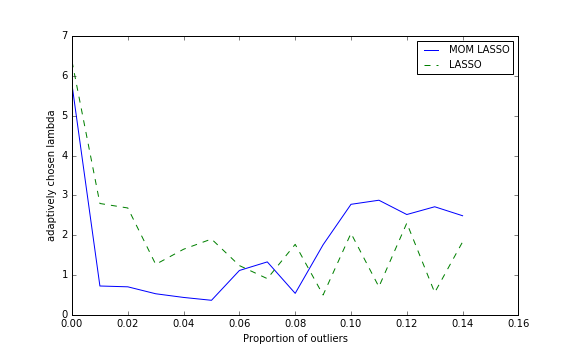}
    \caption{Adaptively chosen $\lambda$ for the LASSO and MOM LASSO}
    \label{fig:MOM_CV_lambda}
  \end{minipage}
\end{figure}
In Figure~\ref{fig:MOM_CV_K}, the adaptively chosen $\hat K$ grows with the number of outliers which is something expected since one needs the number of blocks to be at least twice the number of outliers.

\subsection{Maximinimization, saddle-point, random blocks, outliers detection and depth} 
\label{sub:maximinimization_saddle_point_random_blocks_and_outliers_detection}
In the previous sections, we considered a setup that particularly fits a dataset distributed on clusters. This distributed source of data yields some specific partition of the dataset in compliance with the clusters structure -- in other words, the blocks of data $B_1, \cdots, B_K$ are imposed and fixed by the specific physical form of the dataset. In a batch setup, there is a priori no structural restriction to choose a fixed partition of the dataset given that it has no predefined organization. It appears that, in this setup, there is a way to improve the performance of the various iterative algorithms constructed in the previous sections by choosing randomly the blocks at every (descent and ascent) steps of the algorithm. The aim of this section is to show some advantages of this choice and how this modified version works on the example of ADMM. Moreover, as a byproduct of this approach, it is possible to construct an outliers detection algorithm. This algorithm outputs a score to each data in the dataset measuring its centrality. In particular, data with a low score should be considered as outliers.

Note that the way we introduced the minimaximization estimator in Section~\ref{sec:intro} was based on the observation that the oracle is solution to the minmaximization problem 
$f^*\in\argmin_{f\in F}\sup_{g\in F}P(\ell_f-\ell_g)$. But it appears that $f^*$ is also solution to a maxminimization problem: $f^*\in\argmax_{g\in F}\inf_{f\in F}P(\ell_f-\ell_g)$. This observation naturally yields another estimator: the maxmin estimator 
\begin{equation}\label{eq:maxmin_esti}
\hat g_{K, \lambda} \in\argmax_{g\in F}\inf_{f\in F} T_{K, \lambda}(g, f).
\end{equation}Following the same strategy as in Section~\ref{sec:proofs}, we can show that $\hat g_{K, \lambda} $ has the very same statistical performance as the estimator  $\hat f_{K, \lambda}$ from Section~\ref{sec:main_results} (cf. Section~\ref{sec:learning_without_regularization_and_minimax_optimality} in the case where there is no regularization for a proof). Nevertheless, there is a priori no reason that $\hat g_{K, \lambda}$ and $\hat f_{K, \lambda}$ are the same estimator. That is we don't know in advance that 
\begin{equation}\label{eq:duality_gap}
\argmin_{f\in F}\sup_{g\in F} T_{K, \lambda}(g, f) = \argmax_{g\in F}\inf_{f\in F} T_{K, \lambda}(g, f). 
\end{equation}In other words there is no evidence that the duality gap is null. Since $T_{K, \lambda}(g,f) = - T_{K, \lambda}(f, g)$, \eqref{eq:duality_gap} holds if and only if
\begin{equation*}
 \inf_{f\in F}\sup_{g\in F} T_{K, \lambda}(f, g) = 0.
 \end{equation*} In that case, $\hat f$ is by definition a \textbf{saddle-point} estimator and therefore the two sets of minmax and maxmin estimators are equal. 

 \begin{Remark}[stopping criteria] Along iterations of the previous algorithms it is possible to track down the values of the objective function $\MOM{K}{\ell_{t_p}-\ell_{t_p^\prime}} + \lambda \left(\norm{t_p}_1 - \norm{t_p^\prime}_1\right)$. When this one is close to zero this means that it has achieved a saddle-point and therefore the output of the algorithm is close to the solution of the minmax problem (as well as the maxmin problem). 
 \end{Remark}

 When  $\hat f$ is a saddle-point, the choice of fixed blocks $B_1, \ldots, B_K$ may result in a problem of ``\textbf{local saddle points}": iterations of our original algorithms remain close to a potentially suboptimal local saddle point. 

 To see this, let us consider the vector case (that is for $f(\cdot)=\inr{\cdot, t}$ for $t\in\R^d$). We introduce the following sets: for all $k\in[K]$,
 \begin{equation}\label{eq:partition}
 \cC_k = \left\{(t, t^\prime)\in \R^d\times \R^d: \MOM{K}{\ell_t-\ell_{t^\prime}}= P_{B_k}(\ell_t-\ell_{t^\prime})  \right\}.
 \end{equation} It is clear that $\cup_{k\in[K]}\cC_k = \R^d\times \R^d$. The key idea behind all the previous algorithms is that at each step the current iteration $(t_p, t^\prime_p)$ lies in one of those cells $\cC_k$ and that, if Assumption~\ref{assum:partition} holds,  for almost all iterations there will be an open ball around $(t_p, t^\prime_p)$ contained in the cell $\cC_k$ so  that the objective function is locally equal to $(t, t^\prime)\to P_{B_k}(\ell_t-\ell_{t^\prime}) + \lambda(\norm{t}_1 - \norm{t^\prime}_1)$ which is a convex-concave function. 

 The problem here is that our algorithms are looking for saddle-points so that if there are several cells $\cC_k$ containing saddle-points the previous algorithms may be stuck in one of them whereas a ``better saddle-point'' (that is a saddle point closer to $t^*$) may be in an other cell.

 To overcome this issue, we choose at every (descent and ascent) steps of the previous algorithms a random partition of the dataset into $K$ blocks. The decomposition of the space $\R^d\times \R^d$ into cells $\cC_1, \cdots, \cC_K$ does not exist anymore since the cells are different (randomly chosen) at every steps. As an example, we develop the ADMM procedure with a random choice of blocks In Algorithm~\ref{algo:mom_admm_RB}.

 \vspace{0.6cm}
\begin{algorithm}[H]\label{algo:mom_admm_RB}
\SetKwInOut{Input}{input}\SetKwInOut{Output}{output}\SetKw{Or}{or}
\SetKw{Return}{Return}
\Input{$(t_0, t_0^\prime)\in\R^d\times \R^d$: initial point\\ 
$\eps>0$: a stopping criteria\\ 
$\rho$: parameter}
\Output{approximated solution to the min-max problem \eqref{eq:esti}}  
\BlankLine
 \While{$\norm{t_{p+1}-t_p}_2\geqslant \eps$ \Or $\norm{t^\prime_{p+1}-t^\prime_p}_2\geqslant \eps$}{
  Partition the datasets into $K$ blocks $B_1, \ldots, B_K$ of equal size at random.

  Find $k\in[K]$ such that $\MOM{K}{\ell_{t_p^\prime} - \ell_{t_p}} = P_{B_k}(\ell_{t_p} - \ell_{t_p^\prime})$
  \begin{align*}
\nonumber t_{p+1} &= (\bX_k^\top \bX_k + \rho I_{d\times d})^{-1}(\bX_k^\top \bY_k +\rho z_p - u_p)\\
\nonumber z_{p+1} &= {\rm prox}_{\lambda \norm{\cdot}_1} \left( t_{p+1} + u_p/\rho\right)\\
u_{p+1} &= u_p + \rho(t_{p+1}-z_{p+1})
\end{align*}
   \\
   Partition the datasets into $K$ blocks $B_1, \ldots, B_K$ of equal size at random.

   Find $k\in[K]$ such that $\MOM{K}{\ell_{t_p^\prime} - \ell_{t_{p+1}}} = P_{B_k}(\ell_{t_{p+1}} - \ell_{t_p^\prime})$
    \begin{align*}
\nonumber t_{p+1}^\prime &= (\bX_k^\top \bX_k + \rho I_{d\times d})^{-1}(\bX_k^\top \bY_k +\rho z_p^\prime - u_p^\prime)\\
\nonumber z_{p+1}^\prime &= {\rm prox}_{\lambda \norm{\cdot}_1} \left( t_{p+1}^\prime + u_p^\prime/\rho\right)\\
u_{p+1}^\prime &= u_p^\prime + \rho(t_{p+1}^\prime-z_{p+1}^\prime)
\end{align*}
    }
 \Return $(t_p, t_p^\prime)$
 \caption{The ADMM algorithm for the minimax MOM estimator \eqref{eq:esti} with a random choice of blocks at each steps.}
\end{algorithm}

\begin{figure}[!h]
    \centering
    \includegraphics[width=1\textwidth]{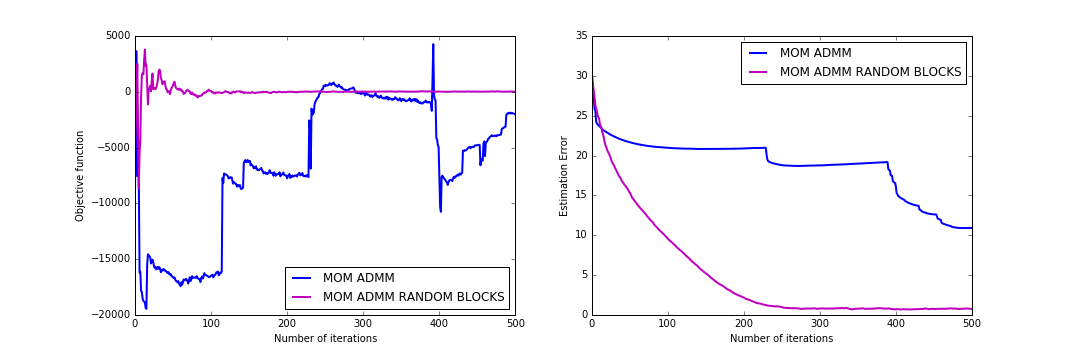}
    \caption{Fixed blocks against random blocks.}
    \label{fig:fixed_random_blocks}
\end{figure}

In Figure~\ref{fig:fixed_random_blocks}, we ran both MOM LASSO procedures via ADMM with fixed and random blocks. Both the objective function and the estimation error of MOM LASSO jump in the case of fixed blocks. These jumps correspond to a change of cell number for these iterations. The algorithm converge to local saddle-points before jumping to other cells. This slows down its convergence. On the other hand, the algorithms with random blocks do not suffer this drawback. Figure~\ref{fig:fixed_random_blocks} shows that the estimation error converges much faster and smoothly for random blocks than for fixed blocks. As a conclusion choosing blocks at random improve both stability and speed of the algorithm, avoiding the issue of ``local saddle points'' in the cells $\cC_k$. Note also that the objective function of the MOM ADMM with random blocks tends to zero so that the duality gap tends to zero (meaning that we do have a natural stopping criterium and that the MOM LASSO is a saddle point).

\vspace{0.8cm} 

A byproduct of this approach is that one can construct an \textbf{outliers detection procedure}. To that end one simply has to count the number of times each data is selected at steps $2$ and $4$ of Algorithm~\ref{algo:mom_admm_RB}. Every data starts with a null score. Then, every time a block $B$ is selected as median block, every data it contains increases its score by one. At the end, every data ends up with a score revealing its centrality for our learning task. 

It is expected that aggressive outliers corrupt their respective blocks. These ``corrupted blocks'' will not be median blocks and will not be chosen. In the case of fixed blocks, informative data cannot be distinguished from outliers lying in the same block, therefore, this outliers detection algorithm only makes sense when blocks are chosen at random. Figure~\ref{fig:outliers_detection} shows performance of this strategy on synthetic data (cf. Section~\ref{sub:simulations_setup_for_the_figures} for more details on the simulations). On this example, outliers (data number $1, 32, 170$ and $194$) end up with a null score meaning that they have never been selected along the descent / ascent iterations of the algorithm. Rearranging the scores, one can observe a jump in the score function between outliers and informative data. The gap between these scores can be enlarged by running more iterations of the algorithm. Finally, the score may be seen as a \textbf{depth} of a data point $(X_i, Y_i)$, the most selected data are the most central. 


\begin{figure}[!h]
    \centering
    \includegraphics[width=1\textwidth]{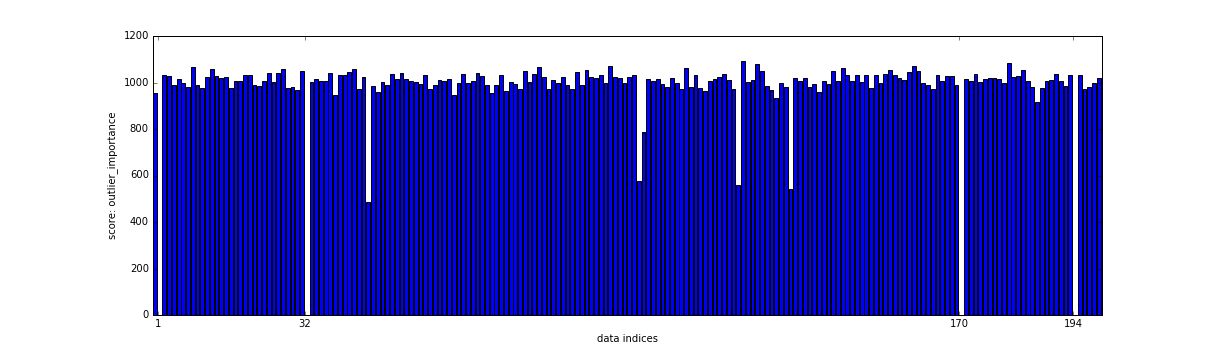}
    \caption{Outliers detection algorithm. The dataset has been corrupted by $4$ outliers at number $1, 32, 170$ and $194$. The score of the outliers is $0$: they haven't been selected even once.}
    \label{fig:outliers_detection}
\end{figure}   


\subsection{Simulations setup for the figures} 
\label{sub:simulations_setup_for_the_figures}
All codes used to obtain the figures in this work are available at \cite{notebook_mom_lasso} and can therefore be used to reproduce all the figures. Let us now precise the parameters of the simulations in Figure~\ref{fig:robustness}, Figure~\ref{fig:MOM_CV_K} and Figure~\ref{fig:MOM_CV_lambda}: the number of observations is $N=200$, the number of features is $d=500$, we construct a sparse vector $t^*\in \R^d$ with sparsity $s=10$ and support chosen at random and non-zero coordinates $t^*_j$ being either equal to $10$, $-10$ or decreasing according to $\exp(-j/10)$. We end up with a set of informative data $\cD_1$ as described in Section~\ref{sec:simulation_study} with noise variance $\sigma=1$. Then this dataset is increasingly corrupted with outliers as in $\cD_4$ (cf. Section~\ref{sec:simulation_study}): outliers are data with output $Y = 10000$ and input $X = (1)_{j=1}^d$. 

The proportion of outliers are $0, 1/100, 2/100, \ldots, 15/100$. On each dataset $\cD_1\cup\cD_4$ obtained after corruption of $\cD_1$ for various proportion of outliers, we run the ADMM algorithm with adaptively chosen regularization parameter $\lambda$ via $V$-fold CV with $V=5$ for the LASSO. Then we run the MOM ADMM with adaptively chosen number of blocks $K$ and regularization parameter $\lambda$ via the MOM CV procedure as introduced in Section~\ref{sub:adaptive_choice_of_the_number_k_of_blocks_via_mom_cv} with $V=5$ and $K^\prime = \max({\rm grid}_K )/V$ where  ${\rm grid}_K = \left\{1, 4, \cdots, 115/4\right\} $ and ${\rm grid}_{\lambda} = \left\{0, 10, 20 \cdots, 100\right\}/\sqrt{100}$ are the search grids used to select the best $K$ and $\lambda$ during the CV and MOM CV steps. The number of iterations of ADMM and MOM ADMM was taken equal to $200$. Those simulations have been run $70$ times and the averaged values of the estimation error, adaptively chosen $K$ and $\lambda$ have been reported in Figure~\ref{fig:robustness}, Figure~\ref{fig:MOM_CV_K} and Figure~\ref{fig:MOM_CV_lambda}. The $\ell_2$ estimation error of the LASSO increases roughly from $0$ when there is no outlier and stabilize at $550$ right after a single outlier enter the dataset. The $\ell_2$ estimation error value $550$ can be explained by the fact that the outliers which was added is $Y=10000$ and $X=(1)_{j=1}^{500}$ which satisfies $Y=\inr{X, t^{**}}$ for $t^{**}=(20)_{j=1}^{500}$ which is the solution with minimal $\ell_1^d$ norm among all the solutions $t\in\R^d$ such that $Y=\inr{X, t}$. It appears that $\norm{t^{**} - t^*}_2$ is of the order of $550$. It means that the LASSO is actually solving the solution to the linear program associated with the outlier instead of solving the linear problem associated with the $200$ other informative data. A single outliers is therefore completely misleading the LASSO.

For Figure~\ref{fig:fixed_random_blocks}, we have ran similar experiments with $N=200$, $d=300$, $s=20$, $\sigma=1$, $K=10$, the number of iterations was $500$ and the regularization parameter was $1/\sqrt{N}$.

For Figure~\ref{fig:outliers_detection}, we took $N=200$, $d=500$, $s=20$, $\sigma=1$, the number of outliers is $|\cO| = 4$ and the outliers are of the form $Y=10000$ and $X=(1)_{j=1}^d$, $K=10$, the number of iterations is $5.000$ and $\lambda = 1/\sqrt{200}$.


\section{Examples of applications} 
\label{sec:further_examples_of_applications}
This section is dedicated to some applications of Theorem~\ref{thm:LepskiReg}. We present two classical examples of regularization in high-dimensional statistics: the $\ell_1$-norm and the SLOPE norm. The associated RERM have been extensively studied in the literature under strong assumptions. The aim of this section is to show that these assumptions can be strongly relaxed when dealing with MOM versions. For instance, as observed in Figure~\ref{fig:robustness}, the performance of the LASSO are drastically deteriorated in the presence of a single outlier while its MOM version remains informative. 

\subsection{The LASSO} 
\label{sub:the_lasso}
The LASSO is obtained when $F=\{\inr{t,\cdot} : t \in \R^d\}$ and the regularization function is the  $\ell_1$-norm :
\[
\hat{t} \in\argmin_{t \in \R^d} \Big(\frac{1}{N}\sum_{i=1}^N \left(\inr{t,X_i}-Y_i\right)^2 + \lambda \|t\|_1 \Big)
,\quad \text{where}\quad \|t\|_1 = \sum_{i=1}^d |t_i|\enspace.\]

Even if recent advances \cite{MR3153940,MR3224285,MR3161450} have shown some limitations of LASSO, it remains the benchmark estimator in high-dimensional statistics because a high dimensional parameter space does not significantly affect its performance as long as $t^*$ is sparse. This was shown for example, in \cite{MR2533469,MR2386087,deter_lasso,MR2396809,MR2488351,MR3025133,MR3181133} for estimation and sparse oracle inequalities, in \cite{MR2278363,MR2274449,bach_support} for support recovery results; more results and references on LASSO can be found in the books \cite{MR2807761,MR2829871}.


\subsection{SLOPE} 
\label{sub:slope}
SLOPE is an extension of the LASSO that was introduced in \cite{slope1,slope2}. The class $F$ is still $F=\{\inr{t,\cdot} : t \in \R^d\}$ and the regularization function is defined for parameters $\beta_1 \geqslant \beta_2 \geqslant ... \geqslant \beta_d>0$ by
$$
\norm{t}_{SLOPE}=\sum_{i=1}^d \beta_i t_i^\sharp,
$$
where $(t_i^\sharp)_{i=1}^d$ denotes the non-increasing re-arrangement of $(|t_i|)_{i=1}^d$. SLOPE norm is a weighted $\ell_1$-norm that coincide with $\ell_1$-norm when $(\beta_1,...,\beta_d)=(1,...,1)$.



\subsection{Classical results for LASSO and SLOPE} 
\label{sub:classical_results_for_lasso_slope_and_trace_norm}
Typical results for LASSO and SLOPE have been obtained in the i.i.d. setup under a subgaussian assumption on the design $X$ and, most of the time, on the noise $\zeta$ as well. Let us for the moment provide a definition of this assumption and recall the one of isotropicity.

\begin{Definition} \label{def:subgaussian}
Let $\ell_2^d$ be a $d$-dimensional inner product space and let $X$ be random variable with values in $\ell_2^d$. We say that $X$ is isotropic when for every  $t \in \ell_2^d$, $\|\inr{X,t}\|_{L^2} = \|t\|_{\ell_2^d}^2$ and it is $L$-subgaussian if for every $p \geqslant 2$ and every $t \in \ell_2^d$, $\|\inr{X,t}\|_{L^p} \leqslant L\sqrt{p} \|\inr{X,t}\|_{L^2}$.
\end{Definition}
In other words, the covariance structure of an isotropic random variable coincides with the inner product in $\ell_2^d$, and if $X$ is an $L$-subgaussian random vector then the $L^p$ norm of all linear forms does not grow faster than the $L^p$ norm of the corresponding Gaussian variable. When dealing with the LASSO and SLOPE, the natural Euclidean structure is used in $\R^d$.

In the following assumption, we recall a setup where both estimators have been studied in \cite{LM_reg1}.
\begin{Assumption}\label{ass:examples}
\begin{enumerate}
	\item the data are i.i.d. (in particular, $|\cI| = N$ and $|\cO|=0$, i.e. there is no outlier),
	\item $X$ is isotropic and $L$-subgaussian,
	\item for $f^*=\inr{t^*,\cdot}$, $\zeta=Y-f^*(X)\in L^{q_0} $ for some $q_0>2$.
\end{enumerate} 
\end{Assumption}

Unlike many results on these estimators, Assumption~\ref{ass:examples} only requires a ``minimal'' $L^{q_0}$ for $q_0>2$ moment on the noise. It appears that LASSO and  SLOPE still achieve optimal rates of convergence under this weak stochastic assumption but the price to pay is a severely deteriorated probability estimate. 

\begin{Theorem}[Theorem~1.4 in \cite{LM_reg1}] \label{thm:intro-LASSO-est}
Consider the LASSO under Assumption \ref{ass:examples}. Let $s\in[d]$. Assume that $N\geqslant c_1 s \log(ed/s)$ and that there is some $v \in \R^d$ supported on at most $s$ coordinates for which $\norm{t^*-v}_1\leqslant c_2 \|\xi\|_{L^{q_0}} s \sqrt{\log(ed)/N}$. The Lasso estimator $\hat t$ with regularization parameter $\lambda= c_3\|\xi\|_{L^{q_0}} \sqrt{\log(ed)/N}$ is such that with probability at least
\begin{equation}\label{eq:proba_lasso}
1-\frac{c_4 \log^{q_0} N}{N^{q_0/2-1}} - 2 \exp\left(-c_5 s \log(ed/s)\right)
\end{equation} for every $1\leqslant p \leqslant 2$
\begin{equation*}
\norm{\hat t-t^*}_p \leqslant c_6\|\xi\|_{L_q} s^{1/p}\sqrt{\frac{\log(ed)}{N}}.
\end{equation*}The constants $(c_j)_{j=1}^6$ depend only on $L$ and $q_0$.
\end{Theorem}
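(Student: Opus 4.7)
The plan is to apply the small-ball method of Mendelson combined with the sparsity-equation machinery of \cite{LM_reg_comp}, decomposing the analysis into three pieces: a lower bound on the quadratic process (which uses only the subgaussian behaviour of $X$), an upper bound on the multiplier process (where the heavy tails of $\xi$ enter and produce the polynomial probability loss in \eqref{eq:proba_lasso}), and a verification of the sparsity equation at the appropriate level $\rho^*\asymp\|\xi\|_{L^{q_0}}s\sqrt{\log(ed)/N}$. The regularization parameter $\lambda$ is then chosen on the scale of the multiplier process, and the standard argument of \cite{LM_reg_comp} concludes. The $\ell_p$ rates for $1\le p\le 2$ are recovered from the $\ell_1$ and $L^2(P)$ bounds by the usual interpolation inequality on approximately $s$-sparse vectors.

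First I would note that Assumption~\ref{ass:small-ball} holds with constants depending only on $L$: the $L$-subgaussian and isotropic assumptions imply $\|\inr{X,t}\|_{L^2}\le L\|\inr{X,t}\|_{L^1}$ for a universal $L$, and Proposition~\ref{prop:SBP=L2/L1} then gives the small-ball property with absolute constants. Next, on an event of probability at least $1-2\exp(-c_5 s\log(ed/s))$, the small-ball method shows that for every $t\in\R^d$ with $\|t-t^*\|_1\le\rho$ and $\|t-t^*\|_{L^2(P)}\ge r$,
\[
\frac1N\sum_{i=1}^N\inr{X_i,t-t^*}^2\gtrsim\|t-t^*\|_{L^2(P)}^2,
\]
provided $r\gtrsim\rho\sqrt{\log(ed)/N}$. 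The threshold $r$ is dictated by the Gaussian mean width of the localisation $\rho B_{\ell_1^d}\cap r B_{L^2(X)}$, which, by a dual Sudakov bound, is of order $\rho\sqrt{\log(ed)/N}$ in the relevant regime.

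The heart of the argument, and the main obstacle, is the multiplier process $\sup_{t}\bigl|(2/N)\sum_i\xi_i\inr{X_i,t-t^*}\bigr|$ under only an $L^{q_0}$ moment on $\xi$. My plan is a truncation split $\xi_i=\xi_i\1(|\xi_i|\le M)+\xi_i\1(|\xi_i|>M)$ at level $M\asymp N^{1/q_0}\|\xi\|_{L^{q_0}}(\log N)^{1/2}$. The bounded part is handled by symmetrization, the contraction principle and a $\log(ed)$-entropy bound on $B_{\ell_1^d}\cap B_2$, which, on the same small-ball event, yields a Rademacher complexity of order $\|\xi\|_{L^{q_0}}r\sqrt{\log(ed)/N}$ over the localisation of radius $(\rho,r)$. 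The untruncated tail is handled by a Fuk--Nagaev tail inequality applied to $\sum_i\xi_i\1(|\xi_i|>M)$ (together with $\|\inr{X_i,\cdot}\|_{\infty}$ on each coordinate via an $\ell_\infty/\ell_1$ duality), which produces precisely the polynomial term $c_4\log^{q_0}N/N^{q_0/2-1}$ in~\eqref{eq:proba_lasso}. Calibrating $M$ so that these two contributions balance is the delicate step.

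Finally, I would verify the sparsity equation at $\rho^*\asymp\|\xi\|_{L^{q_0}}s\sqrt{\log(ed)/N}$. Since $v$ is $s$-sparse with $\|v-t^*\|_1\le\rho^*/20$, the subdifferential $(\partial\|\cdot\|_1)_v$ contains every vector that equals $\mathrm{sign}(v_j)$ on the support of $v$ and is arbitrary in $[-1,1]$ off-support; combining this with the $L^2(P)$-localisation $\|f-f^*\|_{L^2(P)}\le r(\rho^*)\asymp\|\xi\|_{L^{q_0}}\sqrt{s\log(ed/s)/N}$ and a standard decomposition of $h=\hat t-t^*$ into on- and off-support coordinates of $v$ yields $\Delta(\rho^*)\ge 4\rho^*/5$ in the sense of Definition~\ref{def:sparsity_equation}. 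With the choice $\lambda=c_3\|\xi\|_{L^{q_0}}\sqrt{\log(ed)/N}$, the three bounds combine to produce $\|\hat t-t^*\|_1\lesssim\rho^*$ and $\|\hat t-t^*\|_2\lesssim r(\rho^*)$, and interpolation on approximately sparse vectors ($\|h\|_p\le\|h\|_1^{2/p-1}\|h\|_2^{2-2/p}$) delivers the stated $\ell_p$ rates for every $1\le p\le 2$.
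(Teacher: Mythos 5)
Your outline reproduces the strategy by which this theorem is actually proved in \cite{LM_reg1} (the present paper only recalls the statement): a small-ball lower bound on the quadratic process over the localization $\rho B_1^d\cap rB_2^d$, an upper bound on the multiplier process at the level $r\asymp \rho\sqrt{\log(ed)/N}$ dictated by the local Gaussian mean width, the sparsity equation of \cite{LM_reg_comp} verified at $\rho^*\asymp\norm{\xi}_{L^{q_0}}s\sqrt{\log(ed)/N}$, the choice of $\lambda$ at the multiplier scale, and the interpolation $\norm{h}_p\leqslant\norm{h}_1^{2/p-1}\norm{h}_2^{2-2/p}$ for the $\ell_p$ rates. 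All of that matches the original argument. (One cosmetic slip: with the non-adaptive $\lambda\propto\sqrt{\log(ed)/N}$ you cannot claim $r(\rho^*)\asymp\norm{\xi}_{L^{q_0}}\sqrt{s\log(ed/s)/N}$; the attainable rate is $\sqrt{s\log(ed)/N}$, which is exactly what the statement asserts and why the paper calls it suboptimal compared to the adaptive MOM-LASSO.)

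The genuine gap is the multiplier step, which you defer as ``the delicate calibration'': in \cite{LM_reg1} this is precisely where Mendelson's theorem on multiplier empirical processes is invoked, and it is what yields the \emph{dimension-free} polynomial term $c_4\log^{q_0}N/N^{q_0/2-1}$ in \eqref{eq:proba_lasso}. Your truncation scheme as described does not recover it. First, at your truncation level $M\asymp\norm{\xi}_{L^{q_0}}N^{1/q_0}(\log N)^{1/2}$ the event that some $|\xi_i|$ exceeds $M$ already has probability of order $(\log N)^{-q_0/2}$, vastly larger than $N^{1-q_0/2}\log^{q_0}N$, so the tail part cannot be discarded on a good event; matching the stated probability forces $M\asymp\norm{\xi}_{L^{q_0}}\sqrt{N}$ up to logarithms. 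Second, with such a large $M$ the bounded part cannot be treated by symmetrization plus contraction, since contraction replaces $\norm{\xi}_{L^{q_0}}$ by $M$ in the complexity bound; to keep the $\norm{\xi}_{L^{q_0}}$ scale over the $\ell_1$-localization you must control quantities like $\E\max_{j\leqslant d}\bigl(\sum_i\tilde\xi_i^2X_{ij}^2\bigr)^{1/2}$, and with only $q_0/2>1$ moments on $\tilde\xi^2$ a coordinatewise Fuk--Nagaev bound with a union bound over the $d$ coordinates introduces a factor $d$ (or $d^{2/q_0}$) into the deviation probability. That is incompatible with the stated dimension-free term exactly in the regime the LASSO is meant for, $d$ exponentially large in $N$ with only $s\log(ed/s)\lesssim N$. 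So either you import the multiplier-process theorem (as the original proof does), or this step needs a genuinely new argument; as written, the part you postpone is the actual mathematical content of the theorem.
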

The error rate in Theorem \ref{thm:intro-LASSO-est} coincides with the standard estimate on the LASSO (cf. \cite{MR2533469}), but in a broader context: $t^*$ does not need to be sparse but should be approximated by a sparse vector; the target $Y$ is arbitrary (there is no need for a statistical model) and the noise $\zeta$ may be heavy tailed and does not need to be independent from $X$. But there is no room for outliers, the design matrix $X$ still needs to be subgaussian and the data are assumed to be i.i.d.. We will see below that the MOM version of the LASSO can go further, achieving minimax optimal error bounds with a much better probability estimate.

Turning to SLOPE, recall the following result for the regularization norm $\Psi(t)=\sum_{j=1}^d \beta_j t_j^\sharp$ when $\beta_j = C \sqrt{\log(ed/j)}$. 

\begin{Theorem}[Theorem~1.6 in \cite{LM_reg1}]\label{thm:intro-SLOPE-est}
Consider the SLOPE under Assumption \ref{ass:examples}. Assume that $N \geqslant  c_1 s \log(ed/s)$ and  that there is $v\in\R^d$ such that $|{\rm supp}(v)|\leqslant s$ and $\Psi(t^*-v)\leqslant c_2 \|\xi\|_{L_q} s\log(ed/s)/\sqrt{N}$. The SLOPE estimator with regularization parameter $\lambda= c_3 \|\xi\|_{L_q}/\sqrt{N}$, satisfies with the same probability as in \eqref{eq:proba_lasso} that
\begin{equation*}
\Psi(\hat t-t^*)\leqslant c_4\|\xi\|_{L_q} \frac{s}{\sqrt{N}} \log\Big(\frac{ed}{s}\Big) \ \ \mbox{ and } \ \ \norm{\hat t-t^*}_2^2 \leqslant c_5 \|\xi\|_{L_q}^2 \frac{s }{N}\log\Big(\frac{ed}{s}\Big)\enspace.
\end{equation*}The constants $(c_j)_{j=1}^5$ depend only on $L$ and $q_0$.
\end{Theorem}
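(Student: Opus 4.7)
The plan is to instantiate the regularized ERM oracle-inequality framework (essentially the $K=1$ specialization of the machinery in Section~\ref{sec:proofs}, or \cite{LM_reg_comp}) for $F=\{\inr{t,\cdot}:t\in\R^d\}$ with $\Psi$ as regularizer. Three ingredients must be verified: the small-ball/norm-equivalence property underlying Assumption~\ref{ass:small-ball}, the fixed-point complexities $r_Q,r_M$ from Definition~\ref{def:the-three-parameters-Reg}, and a solution $\rho^*$ of the sparsity equation from Definition~\ref{def:sparsity_equation}. The small-ball step is automatic for isotropic $L$-subgaussian $X$: the equivalence $\|\inr{v,X}\|_{L^2}\asymp \|\inr{v,X}\|_{L^1}$ follows from a Paley--Zygmund argument with constants depending only on $L$, so Proposition~\ref{prop:SBP=L2/L1} yields Assumption~\ref{ass:small-ball} with some $\theta_0=\theta_0(L)$.

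Next I would compute the quadratic complexity. By a subgaussian chaining estimate, the Rademacher width of $\rho B_\Psi \cap r B_2^d$ coincides, up to constants depending on $L$, with its Gaussian mean-width. The latter was computed in \cite{slope2} for SLOPE balls with weights $\beta_j=C\sqrt{\log(ed/j)}$: at the critical scale it is of order $r\sqrt{N}$ as soon as $\rho\gtrsim r\sqrt{s\log(ed/s)}$, which gives a fixed point $r_Q(\rho)$ linear in $\rho$ with the right universal constant.

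The main obstacle is the multiplier complexity $r_M$ under only $\zeta\in L^{q_0}$. I would truncate $\zeta_i=\zeta_i\mathbf{1}_{|\zeta_i|\leqslant\tau}+\zeta_i\mathbf{1}_{|\zeta_i|>\tau}$ at $\tau\asymp \|\zeta\|_{L^{q_0}}N^{1/q_0}/\log N$. The bounded part is handled by a multiplier version of Talagrand's generic chaining combined with the subgaussian design, yielding $r_M(\rho)^2\lesssim \|\zeta\|_{L^{q_0}}^2 s\log(ed/s)/N$ at the critical scale; the tail part is controlled by Markov's inequality on $\sum_i\mathbf{1}_{|\zeta_i|>\tau}$, so the event that no $|\zeta_i|$ exceeds $\tau$ holds with probability at least $1-c\log^{q_0}N/N^{q_0/2-1}$. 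This polynomial confidence is exactly the term appearing in \eqref{eq:proba_lasso} and is the unavoidable price of the weak moment assumption.

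Finally, the sparsity step follows the SLOPE computation of \cite{slope2,LM_reg_comp}: given an $s$-sparse $v$ with $\Psi(t^*-v)\leqslant \rho/20$, the subdifferential $(\partial\Psi)_v$ contains a vector whose nonzero coordinates are the top $s$ weights $\beta_1,\dots,\beta_s$ paired with appropriate signs, and one verifies $\Delta(\rho)\geqslant 4\rho/5$ as soon as $\rho\gtrsim \|\zeta\|_{L^{q_0}}s\log(ed/s)/\sqrt{N}$. This pins down $\rho^*\asymp \|\zeta\|_{L^{q_0}} s\log(ed/s)/\sqrt{N}$ and $r(\rho^*)^2 \asymp \|\zeta\|_{L^{q_0}}^2 s\log(ed/s)/N$. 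Calibrating $\lambda\asymp r^2(\rho^*)/\rho^*\asymp \|\zeta\|_{L^{q_0}}/\sqrt{N}$ and invoking the conclusion step (an ERM-analogue of Lemma~\ref{lem:Conclusion}) on the high-probability event where both the quadratic and truncated multiplier processes are controlled delivers the two announced bounds $\Psi(\hat t-t^*)\lesssim \rho^*$ and $\|\hat t-t^*\|_2^2\lesssim r^2(\rho^*)$.
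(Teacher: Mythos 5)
First, note that the paper does not prove this statement: it is quoted verbatim as Theorem~1.6 of \cite{LM_reg1}, so the only fair comparison is with the small-ball/fixed-point strategy of that reference, which your sketch does follow in outline (norm equivalence, $r_Q$ via the Gaussian width of $\rho B_\Psi\cap rB_2^d$, the sparsity equation giving $\rho^*\asymp\|\xi\|_{L_{q_0}}s\log(ed/s)/\sqrt N$ and $\lambda\asymp\|\xi\|_{L_{q_0}}/\sqrt N$). Those parts are fine. Be aware, however, that calling this ``the $K=1$ specialization of Section~\ref{sec:proofs}'' does not literally work: Lemmas~\ref{lem:UBQPReg} and~\ref{lem:proc_multiplicatif} give events of probability $1-\exp(-cK)$ built from the median over $K$ blocks, which is vacuous at $K=1$; the RERM result needs genuinely different concentration tools (generic chaining for the quadratic process under the subgaussian design, and a multiplier-process bound), which is exactly what \cite{LM_reg1} supplies.

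The genuine gap is in your treatment of the multiplier term under $\xi\in L^{q_0}$ only, which is the whole content of the theorem and the source of the polynomial probability in \eqref{eq:proba_lasso}. With your truncation level $\tau\asymp\|\xi\|_{L^{q_0}}N^{1/q_0}/\log N$, Markov gives $\bP(|\xi_i|>\tau)\leqslant(\log N)^{q_0}/N$, so the union bound over the $N$ observations yields a failure probability of order $(\log N)^{q_0}$, which exceeds one: the ``no exceedance'' event is not controlled at all, let alone at level $c\log^{q_0}N/N^{q_0/2-1}$. To recover that probability by this route you would need $\tau\gtrsim\|\xi\|_{L^{q_0}}\sqrt N/\log N$, and then the bounded part is bounded by a quantity of order $\sqrt N$, so any argument that uses the sup-norm of the truncated multipliers (e.g.\ a naive contraction or Bernstein step) inflates the rate by a factor polynomial in $N$ and destroys the claimed bound $r_M^2\lesssim\|\xi\|_{L_{q_0}}^2 s\log(ed/s)/N$. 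The step you summarize as ``a multiplier version of Talagrand's generic chaining'' is precisely Mendelson's multiplier-process theorem, which controls $\sup_f\big|\sum_i\eps_i\xi_i(f-f^*)(X_i)\big|$ directly in terms of $\|\xi\|_{L^{q_0}}$ with the polynomial deviation appearing in \eqref{eq:proba_lasso}, without truncation; as written, your truncation bookkeeping fails quantitatively and the essential argument is missing rather than proved. (A second, smaller point: after truncation the centering $P[\xi(f-f^*)]\leqslant0$ used in the conclusion step no longer holds for the truncated noise, so a bias term would also have to be handled.)
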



\subsection{Statistical analysis of MOM LASSO and MOM SLOPE} 
\label{sub:statistical_performance_of_the_mom_lasso_mom_slope_and_mom_trace_norm}
In this section, Theorem~\ref{thm:LepskiReg} is applied to the set $F$ of linear functionals indexed by $\R^d$ with regularization functions being either the $\ell_1$-norm or the SLOPE norm. The aim is to show that the results from Section~\ref{sub:classical_results_for_lasso_slope_and_trace_norm} are still satisfied (and sometimes even improved) by their MOM version under much weaker assumptions and with a much better probability deviation. 
Start with the new set of assumptions.

\begin{Assumption}\label{ass:examples_MOM}Denote by $(e_j)_{j=1}^d$ the canonical basis of $\R^d$. We assume that
\begin{enumerate}
	\item $|\cI| \geqslant N/2$ and $|\cO|\leqslant c_1 s \log(ed/s)$,
	\item $X$ is isotropic and for every $t\in\R^d$, $p\in [C_0\log(ed)]$ and $j\in [d]$, $\norm{\inr{X, e_j}}_{L^p}\leqslant L \sqrt{p}\norm{\inr{X, e_j}}_{L^2}$,
	\item for $f^*=\inr{t^*,\cdot}$,  $\zeta=Y-f^*(X)\in L^{q_0} $ for some $q_0>2$.
	\item there exists $\theta_0$ such that for all $t\in\R^d$, $\norm{\inr{X, t}}_{L^2}\leqslant \theta_0 \norm{\inr{X, t}}_{L^1}$,
	\item there exists $\theta_m$ such that ${\rm var}(\zeta\inr{X, t})\leqslant \theta_m \norm{\inr{X, t}}_{L^2}$.
\end{enumerate} 
\end{Assumption}

In order to apply Theorem~\ref{thm:LepskiReg} we need to compute the fixed point functions $r_Q(\cdot)$, $r_M(\cdot)$ and solve the sparsity equation in both cases. We start with the fixed point functions. To that end we recall the definition of Gaussian mean widths: for a set $V\subset\R^d$, the Gaussian mean width of $V$ is defined as
\begin{equation}\label{eq:Gauss_mean_width}
\ell^*(V) = \E\cro{ \sup_{(v_j)\in V} \sum_{j=1}^d g_j v_j},\quad \text{where }\quad (g_1,\ldots,g_d)\sim\mathcal N_d(0,I_d)\enspace.
\end{equation}
%
The dual norm of the $\ell_1^d$-norm is the $\ell_\infty^d$-norm which is $1$-unconditional  with respect to the canonical basis of $\R^d$ \cite[Definition~1.4]{shahar_gafa_ln}. Therefore, \cite[Theorem~1.6]{shahar_gafa_ln} applies under the following assumption.

\begin{Assumption}\label{ass:shahar_theo16}
There exist constants $q_0>2$, $C_0$ and $L$ such that $\zeta\in L^{q_0}$, $X$ is isotropic and for every $j\in[d]$ and $1\leqslant p\leqslant C_0 \log d$, $\norm{\inr{X,e_j}}_{L^p}\leqslant L\sqrt{p}\norm{\inr{X,e_j}}_{L^2}$.
\end{Assumption}
%

\noindent
Under Assumption~\ref{ass:shahar_theo16}, if $\sigma=\norm{\zeta}_{L^{q_0}}$, \cite[Theorem~1.6]{shahar_gafa_ln} shows that, for every $\rho>0$, 
\begin{gather*}
\E \sup_{v\in \rho B_1^d \cap r B_2^d} \left|\sum_{i\in[N]}\eps_i \inr{v, X_i}\right|\leqslant c_2\sqrt{N}\ell^*(\rho B_1^d \cap r B_2^d)\enspace,\\
\E \sup_{v\in \rho B_1^d \cap r B_2^d} \left|\sum_{i\in[N]} \eps_i \zeta_i \inr{v, X_i}\right|\leqslant c_2\sigma\sqrt{N}\ell^*(\rho B_1^d \cap r B_2^d)\enspace.
\end{gather*}
Local Gaussian mean widths $\ell^*(\rho B_1^d \cap r B_2^d)$ are bounded from above in \cite[ Lemma~5.3]{LM_reg_comp} and computations of $r_M(\cdot)$ and $r_Q(\cdot)$ follow 
\begin{align*}
r_M^2(\rho) &\lesssim_{L,q_0, \gamma_M} 
\begin{cases}
\sigma^2\frac{ d}{N} & \mbox{ if } \rho^2 N \geqslant \sigma^2 d^2\\
 \rho\sigma\sqrt{\frac{1}{N}\log\Big(\frac{e\sigma d}{\rho\sqrt{N}}\Big)} & \mbox{ otherwise}
\end{cases}
\enspace,\\
\notag r_Q^2(\rho) &
\begin{cases}
 = 0 & \mbox{ if }  N \gtrsim_{L, \gamma_Q}  d \\
 \lesssim_{L, \gamma_Q} \frac{\rho^2}{N}\log\Big(\frac{c(L, \gamma_Q)d}{N}\Big) & \mbox{ otherwise}
\end{cases}
\enspace.
\end{align*}
Therefore, one can take 
\begin{equation}\label{eq:r_function_LASSO}
r^2(\rho) \sim_{L,q_0, \gamma_Q, \gamma_M} 
\begin{cases}
\max\left(\rho \sigma\sqrt{\frac{1}{N}\log\Big(\frac{e \sigma d}{\rho\sqrt{N}}\Big)}, \frac{\sigma^2 d}{N}\right) & \mbox{ if } N \gtrsim_L d\\
\max\left(\rho \sigma \sqrt{\frac{1}{N}\log\Big(\frac{e \sigma d}{\rho\sqrt{N}}\Big)},\frac{\rho^2}{N}\log\Big(\frac{d}{N}\Big)\right)  & \mbox{ otherwise} 
\end{cases}
\enspace.
\end{equation}

Now we turn to a solution of the sparsity equation for the $\ell_1^d$-norm. This equation has been solved in \cite[Lemma~4.2]{LM_reg_comp}, we recall this result.

\begin{Lemma} \label{lem:sparse_equa_LASSO}
If there exists $v\in\R^d$ such that $v \in t^*+(\rho/20)B_1^d$ and $|{\rm supp}(v)| \leqslant c\rho^2/r^2(\rho)$ then 
\begin{equation*}
\Delta(\rho)=\inf_{h \in \rho S_1^{d-1}\cap r(\rho)B_2^{d}} \sup_{g \in \Gamma_{t^*}(\rho)} \inr{h, g-t^*}\geqslant \frac{4\rho}{5}\enspace.
\end{equation*}
where $S_{1}^{d-1}$ is the unit sphere of the $\ell_1^d$-norm and $B_2^{d}$ is the unit Euclidean ball in $\R^d$.
\end{Lemma}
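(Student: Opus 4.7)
The plan is to exhibit, for each admissible $h=f-\bayes$ with $\|h\|_1=\rho$ and $\|h\|_2\leqslant r(\rho)$, one specific subgradient in $\Gamma_{t^*}(\rho)$ that witnesses the desired lower bound. By hypothesis, there is some $v\in \R^d$ with $\|v-t^*\|_1\leqslant \rho/20$ and $I:=\operatorname{supp}(v)$ of cardinality at most $s:=c\rho^2/r^2(\rho)$. Since $v$ lies in $t^*+(\rho/20)B_1^d$, any element of the $\ell_1$-subdifferential $(\partial\|\cdot\|_1)_v$ belongs to $\Gamma_{t^*}(\rho)$ by definition, so it suffices to choose a well-tailored element of $(\partial\|\cdot\|_1)_v$.

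I would use the explicit description $(\partial\|\cdot\|_1)_v=\{g\in\R^d : g_i=\operatorname{sign}(v_i)\text{ for }i\in I,\ |g_i|\leqslant 1\text{ for }i\notin I\}$, and for the given $h$ I would pick $g_i=\operatorname{sign}(v_i)$ on $I$ and $g_i=\operatorname{sign}(h_i)$ on $I^c$. This choice yields
\[
\inr{h,g}=\sum_{i\in I} g_i h_i+\sum_{i\in I^c}|h_i|\;\geqslant\; -\|h_I\|_1+\|h_{I^c}\|_1\;=\;\|h\|_1-2\|h_I\|_1\;=\;\rho-2\|h_I\|_1.
\]
Cauchy--Schwarz combined with $|I|\leqslant s$ and $\|h\|_2\leqslant r(\rho)$ then gives $\|h_I\|_1\leqslant \sqrt{s}\,\|h_I\|_2\leqslant \sqrt{s}\,r(\rho)$, so $\inr{h,g}\geqslant \rho-2\sqrt{s}\,r(\rho)$. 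Taking $c$ small enough (e.g.\ $c=1/100$) forces $2\sqrt{s}\,r(\rho)\leqslant \rho/5$, and the bound $4\rho/5$ follows. Taking the infimum over $h$ then controls $\Delta(\rho)$.

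There is essentially no serious obstacle; the only delicate point is a notational one, namely reconciling the expression $\inr{h,g-t^*}$ appearing in the lemma with the standard convention that $g\in\Gamma_{t^*}(\rho)$ is a linear functional (under which the quantity maximised coincides with the subgradient inequality used in Lemma~\ref{lem:LBSubGrad}, hence with $\inr{h,g}$ once $\R^d$ is self-dual). Beyond this, the proof is the classical $\ell_1$-compatibility argument: split $h$ along $I$ and its complement, align the subgradient with $h$ on $I^c$, and pay at most $2\sqrt{|I|}\,\|h\|_2$ on $I$ via Cauchy--Schwarz. The sparsity hypothesis $|\operatorname{supp}(v)|\leqslant c\rho^2/r^2(\rho)$ is exactly what makes this penalty negligible compared to $\rho$, which is why the paper formulates the sparsity equation in these terms.
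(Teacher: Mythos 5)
Your proof is correct and is essentially the argument the paper relies on: the paper does not reprove this lemma but cites Lemma~4.2 of \cite{LM_reg_comp}, whose proof is exactly your construction — take the subgradient at the sparse approximant $v$ equal to $\operatorname{sign}(v_i)$ on $\operatorname{supp}(v)$ and aligned with $h$ off the support, split $\|h\|_1$ accordingly, and control $\|h_I\|_1\leqslant\sqrt{|I|}\,r(\rho)$ by Cauchy--Schwarz, with the constant $c$ chosen small enough (e.g.\ $c=1/100$) so that the penalty is at most $\rho/5$. Your remark about the notation $\inr{h,g-t^*}$ versus $z^*(h)$ is also the right reading of the statement.
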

As a consequence, if $N\gtrsim s \log(ed/s)$ and if there exists a $s$-sparse vector in $t^*+(\rho/20)B_1^d$, Lemma~\ref{lem:sparse_equa_LASSO} and the choice of $r(\cdot)$ in \eqref{eq:r_function_LASSO} imply that for $\sigma = \norm{\zeta}_{L^{q_0}}$,
\begin{equation*}
\rho^* \sim_{L, q_0} \sigma s \sqrt{\frac{1}{N}\log\left(\frac{ed}{s}\right)} \mbox{ and } r^2(\rho^*)\sim \frac{\sigma^2 s}{N}\log\left(\frac{ed}{s}\right)
\end{equation*}then $\rho^*$ satisfies the sparsity equation and $r^2(\rho^*)$ is the rate of convergence of the LASSO for $\lambda \sim r^2(\rho^*)/\rho^*\sim \norm{\zeta}_{L^{q_0}}\sqrt{\log(ed/s)/N}$. But, this choice of $\lambda$ requires to know the sparsity parameter $s$ which is usually not available. That is the reason why we either need to choose a larger value for the $r(\cdot)$ function as in \cite{LM_reg1} -- this results in the suboptimal $\sqrt{\log(ed)/N}$ rates of convergence from Theorem~\ref{thm:intro-LASSO-est} --  or to use an adaptation step as section~\ref{sec:adap_lep_reg} -- this results in the better minimax rate $\sqrt{\log(ed/s)/N}$ achieved by the MOM LASSO. To get the latter one needs a final ingredient which is the computation of the radii $\rho_K$ and $\lambda\sim r^2(\rho_K)/\rho_K$. Let $K\in[N]$ and $\sigma = \norm{\zeta}_{L^{q_0}}$. The equation $K=c r(\rho_K)^2N$ is solved by
\begin{equation}\label{eq:LASSO_choice_rho_K}
 \rho_K\sim_{L,q_0} \frac{K}{\sigma} \sqrt{\frac{1}{N}\log^{-1}\left(\frac{\sigma^2 d}{K}\right)}
 \end{equation} for the $r(\cdot)$ function defined in \eqref{eq:r_function_LASSO}.
 Therefore,  \begin{equation}\label{eq:reg_param_lasso}
 \lambda \sim \frac{r^2(\rho_K)}{\rho_K} \sim_{L,q_0} \sigma \sqrt{\frac{1}{N}\log\left(\frac{e\sigma d}{\rho_K\sqrt{N}}\right)} \sim_{L,q_0} \sigma \sqrt{\frac{1}{N}\log\left(\frac{e\sigma^2 d}{K}\right)}\enspace.
 \end{equation}
The regularization parameter depends on the ``level of noise'' $\sigma$, the $L^{q_0}$-norm of $\zeta$. This parameter is unknown in practice. Nevertheless, it can be estimated and replaced by this estimator in the regularization parameter as in \cite[Sections~5.4 and 5.6.2]{MR3307991}.

The following result follows from Theorem~\ref{thm:LepskiReg} together with the computation of $\rho^*$, $r_Q(\cdot)$, $r_M(\cdot)$ and $r(\cdot)$ from the previous sections. 

\begin{Theorem}\label{theo:mom_lasso_sharp}
Grant Assumption~\ref{ass:examples_MOM}. The MOM-LASSO estimator $\hat{t}$ satisfies, with probability at least $1-c_1 \exp(-c_2 s \log(ed/s))$,  for every $1\leqslant p\leqslant 2$, 
\begin{equation*}
\norm{\hat{t} - t^*}_p\leqslant c_3\norm{\zeta}_{L_{q_0}}s^{1/p}\sqrt{\frac{1}{N}\log\left(\frac{ed}{s}\right)},
\end{equation*}where $(c_j)_{j=1}^3$ depends only on $\theta_0, \theta_m$ and $q_0$.
\end{Theorem}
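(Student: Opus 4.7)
The plan is to apply the adaptive result Theorem~\ref{thm:LepskiReg} with the minimal choice $K = K^*$. The first step is to check that items (1)--(5) of Assumption~\ref{ass:examples_MOM} entail the three abstract hypotheses used by Theorem~\ref{thm:LepskiReg}. Since the informative data are i.i.d., Assumption~\ref{ass:Mom2F} holds trivially with $\theta_{r0} = 1$; items (4) and (5) are literally Assumptions~\ref{ass:small-ball} and~\ref{ass:margin}. The robustness hypothesis \eqref{eq:robust_theo_basic} reduces to $|P_i\zeta(f-f^*) - P\zeta(f-f^*)| = 0$ because $P_i = P$ for $i\in\cI$, so it is satisfied for free.

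The second step is to import, from Section~\ref{sub:statistical_performance_of_the_mom_lasso_mom_slope_and_mom_trace_norm}, the explicit expressions for the fixed-point functions and for the solution of the sparsity equation. Item~(2) of Assumption~\ref{ass:examples_MOM} matches the hypothesis of Mendelson's Theorem~1.6 in \cite{shahar_gafa_ln}, which, together with Lemma~5.3 in \cite{LM_reg_comp}, bounds the relevant local Gaussian mean widths $\ell^*(\rho B_1^d \cap r B_2^d)$ and yields the expression \eqref{eq:r_function_LASSO} for $r(\cdot)$. Invoking Lemma~\ref{lem:sparse_equa_LASSO} with the $s$-sparse approximation of $t^*$ provided by item~(1) gives
\[
\rho^* \sim_{L,q_0} \sigma s \sqrt{\frac{\log(ed/s)}{N}}, \qquad r^2(\rho^*) \sim_{L,q_0} \frac{\sigma^2 s}{N}\log\left(\frac{ed}{s}\right), \qquad K^* \sim s\log(ed/s),
\]
where $\sigma = \|\zeta\|_{L^{q_0}}$.

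The third step is to invoke Theorem~\ref{thm:LepskiReg} with $K = K^*$. The side conditions $K^* \geqslant 8|\cO|$ and $|\cO| \leqslant N/(768\theta_0^2\theta_{r0}^2)$ are guaranteed by item~(1) after choosing the absolute constant $c_1$ in Assumption~\ref{ass:examples_MOM} small enough. The conclusion then provides, with probability at least $1 - 4\exp(-K^*/2304) \geqslant 1 - c_1\exp(-c_2 s \log(ed/s))$, the simultaneous bounds $\|\hat t - t^*\|_1 \leqslant 2\rho^*$ and $\|\langle X,\hat t - t^*\rangle\|_{L^2_P} \leqslant r(2\rho^*)$. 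Because $X$ is isotropic, $\|\langle X,h\rangle\|_{L^2_P} = \|h\|_2$, so these two bounds give the $p=1$ and $p=2$ instances of the claim up to absolute constants.

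The last step extends the bound to intermediate $p\in(1,2)$ by log-convexity of $\ell^p$ norms: for any $h\in\R^d$, $\|h\|_p \leqslant \|h\|_1^{2/p-1}\|h\|_2^{2-2/p}$. Combining the two extremal bounds above yields $\|\hat t - t^*\|_p \lesssim \sigma s^{1/p}\sqrt{\log(ed/s)/N}$, as claimed. The main obstacle is in fact the second step: one must verify that the weakened moment assumption in item~(2) of Assumption~\ref{ass:examples_MOM} (only $L^{C_0\log(ed)}$ moments of the coordinate marginals, rather than full subgaussianity of $X$) still suffices to recover the sharp local mean-width estimate. This is precisely the content of \cite[Theorem~1.6]{shahar_gafa_ln}, and it is the reason our statement holds under a substantially weaker hypothesis than Theorem~\ref{thm:intro-LASSO-est}, while simultaneously enjoying the much improved deviation $\exp(-c s\log(ed/s))$ in place of the polynomial tail $\log^{q_0}(N)/N^{q_0/2-1}$ coming from the ERM analysis.
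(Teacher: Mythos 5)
Your proof is correct and follows essentially the same route as the paper's: apply the adaptive Theorem~\ref{thm:LepskiReg} at $K\sim K^*\sim s\log(ed/s)$, using the fixed-point computation \eqref{eq:r_function_LASSO} and the sparsity-equation solution (Lemma~\ref{lem:sparse_equa_LASSO}) to identify $\rho^*$ and $r(\rho^*)$, and then interpolate the resulting $\ell_1$ and $\ell_2$ (via isotropy, $L^2_P$) bounds through $\norm{v}_p\leqslant\norm{v}_1^{2/p-1}\norm{v}_2^{2-2/p}$. Your extra verification that Assumption~\ref{ass:examples_MOM} implies Assumptions~\ref{ass:Mom2F}--\ref{ass:small-ball} and condition \eqref{eq:robust_theo_basic} only makes explicit what the paper leaves implicit, so nothing differs in substance.
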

In particular, Theorem~\ref{theo:mom_lasso_sharp} shows that, for our estimator contrary to the one in \cite{LugosiMendelson2016}, the sparsity parameter $s$ does not have to be known in advance in the LASSO case.
\proof
It follows from Theorem~\ref{thm:LepskiReg}, the computation of $r(\rho_K)$ from \eqref{eq:r_function_LASSO} and $\rho_K$ in \eqref{eq:LASSO_choice_rho_K} that with probability at least $1-c_0\exp(- cr(\rho_K)^2N/\overline{C})$, $\norm{\hat{t} - t^*}_1\leqslant \rho_{K^*}$ and $\norm{\hat t - t^*}_2\lesssim r(\rho_K)$.
The result follows since $\rho_{K^*}\sim\rho^* \sim_{L, q_0} \sigma s \sqrt{\frac{1}{N}\log\left(\frac{ed}{s}\right)}$
and $\norm{v}_p\leqslant \norm{v}_1^{-1+2/p}\norm{v}_2^{2-2/p}$ for all $v\in\R^d$ and $1\leqslant p\leqslant2$.
\endproof

Theoretical properties of MOM LASSO (cf. Theorem~\ref{theo:mom_lasso_sharp}) outperform those of LASSO (cf. Theorem~\ref{thm:intro-LASSO-est}) in several ways:
\begin{itemize}
	\item Estimation rates achieved by MOM-LASSO are the actual minimax rates $s \log(ed/s)/N$,  see \cite{BLT16}, while classical LASSO estimators achieve the rate $s \log(ed)/N$. This improvement is possible thanks to the adaptation step in MOM-LASSO. 
	\item  the probability deviation in  \eqref{eq:proba_lasso} is polynomial -- $1/N^{(q_0/2-1)}$ -- whereas it is exponentially small for MOM LASSO.  Exponential rates for LASSO hold only if $\zeta$ is subgaussian ($\norm{\zeta}_{L_p}\leqslant C \sqrt{p}\norm{\zeta}_{L_2}$ for all $p\geqslant2$).
	\item MOM LASSO is insensitive to data corruption by up to $s\log(ed/s)$ outliers while only one outlier can be responsible of a dramatic breakdown of the performance of LASSO (cf. Figure~\ref{fig:robustness}). Moreover, the informative data are only asked to have equivalent $L^2$ moments to the one of $P$ for the MOM LASSO whereas the properties of the LASSO are only known in the i.i.d. setup.
	\item Assumptions on $X$ are weaker for MOM LASSO than for LASSO. In the LASSO case, we assume that $X$ is subgaussian whereas for the MOM LASSO we assumed that the coordinates of $X$ have $C_0 \log(ed)$ moments and that it satisfies a $L^2/L^1$ equivalence assumption.
\end{itemize}

Let us now turn to the SLOPE case. The computation of the fixed point functions $r_Q(\cdot)$ and $r_M(\cdot)$ rely on \cite[Theorem~1.6]{shahar_gafa_ln} and the computation from \cite{LM_reg1}. Again, the SLOPE norm has a dual norm which is $1$-unconditional with respect to the canonical basis of $\R^d$, \cite[Definition~1.4]{shahar_gafa_ln}. Therefore, it follows from  \cite[Theorem~1.6]{shahar_gafa_ln}  that under Assumption~\ref{ass:shahar_theo16}, one has 
\begin{gather*}
\E \sup_{v\in \rho \cB \cap r B_2^d} \left|\sum_{i\in[N]}\eps_i \inr{v, X_i}\right|\leqslant c_2\sqrt{N}\ell^*(\rho \cB \cap r B_2^d)\enspace,\\
\E \sup_{v\in \rho \cB \cap r B_2^d} \left|\sum_{i\in[N]} \eps_i \zeta_i \inr{v, X_i}\right|\leqslant c_2\sigma\sqrt{N}\ell^*(\rho \cB \cap r B_2^d)\enspace,
\end{gather*}where $\cB$ is the unit ball of the SLOPE norm. Local Gaussian mean widths $\ell^*(\rho \cB \cap r B_2^d)$ are bounded from above in \cite[ Lemma~5.3]{LM_reg_comp}: $\ell^*(\rho \cB \cap r B_2^d)\lesssim \min\{C\rho,\sqrt{d}r\}$ when $\beta_j=C \sqrt{\log(ed)/j}$ for all $j\in[d]$  and computations of $r_M(\cdot)$ and $r_Q(\cdot)$ follow:
\begin{equation*}
r_Q^2(\rho) \lesssim_{L} \left\{
\begin{array}{cc}
0 & \mbox{ if }  N \gtrsim_L d
\\
\\
\frac{\rho^2}{N} & \mbox{ otherwise,}
\end{array}
\right.
\mbox{ and } \ \
r_M^2(\rho) \lesssim_{L,q,\delta} \left\{
\begin{array}{cc}
\|\xi\|_{L_q}^2 \frac{ d}{N} & \mbox{ if } \rho^2 N \gtrsim_{L,q,\delta} \|\xi\|_{L_q}^2 d^2
\\
\\
\|\xi\|_{L_q}\frac{\rho}{\sqrt{N}} & \mbox{ otherwise.}
\end{array}
\right.
\end{equation*}

The sparsity equation relative to the SLOPE norm has been solved in Lemma~4.3 from \cite{LM_reg1}.
\begin{Lemma} \label{lemma:delta-for-slope}
Let $1 \leqslant s \leqslant d$ and set ${\cal B}_s = \sum_{j \leqslant s} \beta_j/\sqrt{j}$. If $t^*$ is $\rho/20$ approximated (relative to the SLOPE norm) by an $s$-sparse vector and if $40{\cal B}_s \leqslant \rho/r(\rho)$ then $\Delta(\rho) \geqslant 4\rho/5$.
\end{Lemma}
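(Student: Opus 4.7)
The strategy mimics the $\ell_1$-case of Lemma~\ref{lem:sparse_equa_LASSO}: for each direction realising the infimum in $\Delta(\rho)$, I plan to exhibit a specific subgradient $z^*\in(\partial\norm{\cdot}_{SLOPE})_v$ and show $z^*(h)\geqslant 4\rho/5$. Take $v$ to be the $s$-sparse approximation of $t^*$ granted by the hypothesis and set $I=\mathrm{supp}(v)$; the assumption $\norm{v-t^*}_{SLOPE}\leqslant \rho/20$ yields the inclusion $(\partial\norm{\cdot}_{SLOPE})_v\subset \Gamma_{t^*}(\rho)$, so any such $z^*$ is a legitimate witness. Fix an arbitrary $f\in H_{\rho}$, set $h=f-t^*$ (so $\norm{h}_{SLOPE}=\rho$ and $\norm{h}_{L^2_P}\leqslant r(\rho)$), and decompose $h=h_I+h_{I^c}$.

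The key structural input is the description of $\partial\norm{v}_{SLOPE}$ at an $s$-sparse point. Ordering the nonzero coordinates of $v$ by $|v_{i_1}|\geqslant \cdots\geqslant |v_{i_s}|>0$, a sufficient condition for $z^*\in\partial\norm{v}_{SLOPE}$ is that $z^*_{i_k}=\beta_k\,\mathrm{sign}(v_{i_k})$ for every $k\in[s]$ and that $(z^*_{I^c})^\sharp_m\leqslant \beta_{s+m}$ for every $m\geqslant 1$; this interleaving automatically yields the global dual-norm constraint $\sum_{j\leqslant k}(z^*)^\sharp_j\leqslant \sum_{j\leqslant k}\beta_j$. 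I would then pick $z^*_{I^c}$ aligned with $h_{I^c}$: at the coordinate of $h_{I^c}$ carrying the $m$-th largest absolute value, set $z^*_j=\beta_{s+m}\,\mathrm{sign}(h_j)$, so that $z^*_{I^c}(h_{I^c})=\sum_{m\geqslant 1}\beta_{s+m}(h_{I^c})^\sharp_m$.

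Writing $z^*(h)=z^*_I(h_I)+z^*_{I^c}(h_{I^c})$, the first term is controlled by Hardy--Littlewood rearrangement and the elementary estimate $u^\sharp_j\leqslant \norm{u}_{\ell_2}/\sqrt{j}$ combined with isotropy of $X$:
\[
\bigl|z^*_I(h_I)\bigr|\leqslant \sum_{k=1}^s \beta_k(h_I)^\sharp_k\leqslant \mathcal{B}_s\,\norm{h}_{\ell_2}=\mathcal{B}_s\,\norm{h}_{L^2_P}\leqslant \mathcal{B}_s\,r(\rho)\leqslant \rho/40,
\]
where the last step uses $40\mathcal{B}_s\leqslant \rho/r(\rho)$. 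For the second term, since $h_{I^c}$ coincides with $h$ except on at most $s$ zeroed coordinates, one has $(h_{I^c})^\sharp_m\geqslant h^\sharp_{s+m}$ for all $m\geqslant 1$; therefore, applying the same rearrangement bound to $h$ itself,
\[
z^*_{I^c}(h_{I^c})\geqslant \sum_{m\geqslant 1}\beta_{s+m}h^\sharp_{s+m}=\norm{h}_{SLOPE}-\sum_{k=1}^s\beta_k h^\sharp_k\geqslant \rho-\mathcal{B}_s r(\rho)\geqslant \tfrac{39\rho}{40}.
\]
Summing these two inequalities gives $z^*(h)\geqslant 39\rho/40-\rho/40=19\rho/20>4\rho/5$; since $f\in H_\rho$ was arbitrary, this proves $\Delta(\rho)\geqslant 4\rho/5$.

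The hard part will be the careful book-keeping for the SLOPE subdifferential, and in particular the verification that the interleaved candidate $z^*$ actually satisfies the global constraint $\sum_{j\leqslant k}(z^*)^\sharp_j\leqslant \sum_{j\leqslant k}\beta_j$ for every $k$ (rather than merely the coordinate-wise dominations). Once that structural step is in place, the remainder is a transparent pair of rearrangement inequalities calibrated by the hypothesis $40\mathcal{B}_s\leqslant \rho/r(\rho)$, with the $\rho/20$-approximation assumption used only to embed $v$ into the set defining $\Gamma_{t^*}(\rho)$.
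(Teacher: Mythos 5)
Your proof is correct: the witness subgradient at the $s$-sparse approximant (coordinates $\beta_k\,\mathrm{sign}(v_{i_k})$ on the support, interleaved values $\beta_{s+m}$ aligned with $h_{I^c}$ off it) does satisfy the SLOPE dual-ball constraint $\sum_{j\leqslant k}(z^*)^\sharp_j\leqslant\sum_{j\leqslant k}\beta_j$ and attains $z^*(v)=\norm{v}_{SLOPE}$, and the two rearrangement estimates, together with isotropy to identify $\norm{h}_{\ell_2}$ with $\norm{h}_{L^2_P}\leqslant r(\rho)$, give $z^*(h)\geqslant 19\rho/20>4\rho/5$ exactly as claimed. The paper itself gives no proof here (it simply cites Lemma~4.3 of \cite{LM_reg1}), and your argument is essentially that standard one; the only housekeeping remark is that if the approximant has support of size $s'<s$ the same computation goes through with $\mathcal{B}_{s'}\leqslant\mathcal{B}_s$ and $\beta_{s'+m}\geqslant\beta_{s+m}$, so the constants only improve.
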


For  $\beta_j\leqslant C \sqrt{\log(ed/j)}$, one may verify that $\cB_s= \sum_{j\leqslant s}\beta_j/\sqrt{j} \lesssim C \sqrt{s \log(ed/s)}$. Hence, the condition $\cB_s\lesssim \rho/r(\rho)$ holds when $N \gtrsim_{L,q_0} s \log(ed/s)$ and $\rho \gtrsim_{L,q_0} \|\xi\|_{L_q} \frac{s }{\sqrt{N}} \log\Big(\frac{ed}{s}\Big)$. Hence, it follows from Lemma~\ref{lemma:delta-for-slope} that $\Delta(\rho)\geqslant 4\rho/5$ when there is an $s$-sparse vector in $t^*+(\rho/20) B_\Psi$; therefore, one may apply Theorem~\ref{thm:RBRhoEstPen} for the choice of the regularization parameter: $\lambda \sim r^2(\rho)/\rho \sim_{L,q,\delta} \|\xi\|_{L_q}/\sqrt{N}$.

Now, the final ingredient is to compute the $\rho_K$ solution to $K = c r(\rho_K)^2 N$. It is straightforward to check that $\rho_K\sim K/(\sigma \sqrt{N})$ and still $\lambda \sim r^2(\rho_K)/\rho_K \sim_{L,q,\delta} \|\xi\|_{L_q}/\sqrt{N}$.

The following result follows from Theorem~\ref{thm:LepskiReg} together with the computation of $\rho^*, \rho_K$, $r_Q(\cdot)$, $r_M(\cdot)$ and $r(\cdot)$ above. Its proof is similar to the one of Theorem~\ref{theo:mom_lasso_sharp} and is therefore omitted. 

\begin{Theorem}\label{theo:mom_slope_sharp}
Grant Assumption~\ref{ass:examples_MOM}. The MOM-SLOPE estimator $\hat{t}$ satisfies, with probability at least $1-c_1 \exp(-c_2 s \log(ed/s))$, 
\begin{equation*}
\norm{\hat{t} - t^*}_2^2\leqslant c_3\norm{\zeta}_{L_{q_0}}^2 \frac{s}{N}\log\left(\frac{ed}{s}\right),
\end{equation*}where $(c_j)_{j=1}^3$ depends only on $\theta_0, \theta_m$ and $q_0$.
\end{Theorem}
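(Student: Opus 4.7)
\bigskip

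\noindent\textbf{Proof plan for Theorem~\ref{theo:mom_slope_sharp}.} The approach is a direct application of the adaptive master theorem (Theorem~\ref{thm:LepskiReg}) to the SLOPE setting, mirroring what was done for MOM-LASSO in Theorem~\ref{theo:mom_lasso_sharp}. All non-trivial inputs (fixed point functions, sparsity equation, local Gaussian mean widths) have already been assembled in the text preceding the theorem, so the work is essentially organisational.

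First, I would check that Assumption~\ref{ass:examples_MOM} supplies the three structural hypotheses of Theorem~\ref{thm:LepskiReg}. Assumption~\ref{ass:Mom2F} holds with $\theta_{r0}=1$ since the informative data are i.i.d.\ with common distribution $P$; Assumption~\ref{ass:small-ball} is item~(4) of Assumption~\ref{ass:examples_MOM}; Assumption~\ref{ass:margin} is item~(5). The compatibility hypothesis of Remark~\ref{rem:equiv_L2_norm} is trivial in the i.i.d.\ case. Finally, the outlier budget $|\cO|\leqslant c_1 s\log(ed/s)$ in Assumption~\ref{ass:examples_MOM} will match the requirement $|\cO|\leqslant N/(768\theta_0^2\theta_{r0}^2)$ and the condition $K\geqslant 8|\cO|$ in Theorem~\ref{thm:LepskiReg} as soon as $c_1$ is chosen small enough relative to the constants produced by the fixed point computation below.

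Next, I would specialise the regularisation norm $\|\cdot\|$ to the SLOPE norm with weights $\beta_j = C\sqrt{\log(ed/j)}$. The dual SLOPE norm is $1$-unconditional with respect to the canonical basis of $\R^d$, so \cite[Theorem~1.6]{shahar_gafa_ln} applies under Assumption~\ref{ass:shahar_theo16}, which is exactly item~(2)-(3) of Assumption~\ref{ass:examples_MOM}. Combined with the local Gaussian mean width bound $\ell^*(\rho\cB\cap rB_2^d)\lesssim \min(C\rho,\sqrt{d}\,r)$ recalled in the excerpt, this yields the same fixed point expressions displayed there:
\[
r_Q^2(\rho)\lesssim_L \rho^2/N \quad\text{and}\quad r_M^2(\rho)\lesssim_{L,q_0}\sigma\rho/\sqrt{N}
\]
in the relevant regime, where $\sigma=\|\zeta\|_{L^{q_0}}$. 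Hence I may take $r^2(\rho)\sim_{L,q_0}\sigma\rho/\sqrt{N}$ and $\lambda\sim\|\zeta\|_{L^{q_0}}/\sqrt{N}$.

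Then I would invoke Lemma~\ref{lemma:delta-for-slope}: since $\cB_s=\sum_{j\leqslant s}\beta_j/\sqrt{j}\lesssim\sqrt{s\log(ed/s)}$, the sparsity equation $\Delta(\rho)\geqslant 4\rho/5$ is solved by
\[
\rho^*\sim_{L,q_0}\sigma\frac{s}{\sqrt{N}}\log(ed/s),\qquad r^2(\rho^*)\sim_{L,q_0}\sigma^2\frac{s}{N}\log(ed/s),
\]
under the sample-size condition $N\gtrsim s\log(ed/s)$ built into the statement. Consequently $K^*\sim Nr^2(\rho^*)\sim s\log(ed/s)$, and the outlier condition $K^*\geqslant 8|\cO|$ is precisely what is encoded in the assumption $|\cO|\leqslant c_1 s\log(ed/s)$.

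Finally, applying Theorem~\ref{thm:LepskiReg} at $K=K^*$ produces, on an event of probability at least
\[
1-4\exp(-c K^*)\geqslant 1-c_1\exp\bigl(-c_2 s\log(ed/s)\bigr),
\]
the bound $\|\widehat t - t^*\|_{L^2_P}\leqslant r(2\rho_{K^*})\lesssim r(\rho^*)$. Since $X$ is isotropic (item~(2) of Assumption~\ref{ass:examples_MOM}), $\|\widehat t - t^*\|_2 = \|\langle X,\widehat t - t^*\rangle\|_{L^2_P}$, and the announced bound follows after squaring. The only subtle point, and what I would flag as the main obstacle, is keeping the constants in Lemma~\ref{lemma:delta-for-slope} and in the fixed point computation compatible with one another so that a single $\rho^*$ works simultaneously for the sparsity equation and for the target rate; this is why $\rho^*$ must be chosen on the slightly larger scale $s\log(ed/s)/\sqrt{N}$ rather than $\sqrt{s\log(ed/s)/N}$. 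Once these constants are tracked, the proof is complete, the rate being the minimax one over $s$-sparse vectors.
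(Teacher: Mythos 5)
Your proposal is correct and follows exactly the route the paper takes (and explicitly sketches before omitting the details): verify Assumption~\ref{ass:examples_MOM} implies the hypotheses of Theorem~\ref{thm:LepskiReg}, compute $r_Q$, $r_M$ via \cite[Theorem~1.6]{shahar_gafa_ln} and the local Gaussian mean width bound, solve the sparsity equation with Lemma~\ref{lemma:delta-for-slope} to get $\rho^*\sim\sigma s\log(ed/s)/\sqrt{N}$ and $K^*\sim s\log(ed/s)$, then apply the adaptive theorem at $K=K^*$ and use isotropy to translate the $L^2_P$ bound into the Euclidean bound. This is the same argument as the paper's proof of Theorem~\ref{theo:mom_lasso_sharp}, which the paper says carries over verbatim to the SLOPE case.
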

MOM-SLOPE has the same advantages upon SLOPE as MOM-LASSO upon LASSO. Those improvements are listed below Theorem~\ref{theo:mom_lasso_sharp} and will not be repeated. The only difference is that SLOPE, unlike LASSO, already achieves the minimax rate $s \log(ed/s)/N$ .


\section{Learning without regularization} 
\label{sec:learning_without_regularization_and_minimax_optimality}
All the results from the previous sections also apply in the setup of learning with no regularization which is the framework one should consider when there is no a priori known structure on the oracle.

We consider the learning problem with no regularization. In this setup, we may use both minmaximization or maxminimization estimators
\begin{equation}\label{eq:MOM-ERM}
 \ERM{K} \in \argmin_{f\in F}\sup_{g\in F} T_K(g, f) \mbox{ and }  \widehat{g}_K \in \argmax_{g\in F}\inf_{f\in F} T_K(g, f)
 \end{equation}where $T_K(g, f) = \MOM{K}{\ell_f-\ell_g}$.

 We show below that $\widehat f_K$ and $\widehat g_K$ are efficient procedures even in situations where the dataset is corrupted by outliers. The case $K=1$ corresponds to the classical ERM: $\widehat f_1 = \widehat g_1 \in\argmin_{f\in F}P_N\ell_f$ which can only be trusted when used with a ``clean dataset''. 

 Indeed, the ideal setup for ERM is the subgaussian (and convex) framework: that is for a convex class $F$ of functions,  i.i.d. data $(X_i, Y_i)_{i=1}^N$ having the same distribution as $(X, Y)$ and such that for some $L>0$ and all $f,g\in F$, 
 \begin{equation}\label{eq:subgaussian_assum}
   \norm{Y}_{\psi_2}<\infty \mbox{ and } \norm{g(X) - f(X)}_{\psi_2}\leqslant L\norm{g(X)-f(X)}_{L_2}.
   \end{equation} When $F$ satisfies the right-hand side of \eqref{eq:subgaussian_assum}, we say that $F$ is a $L$-subgaussian class. It is proved in \cite{LM13} that in this setup the ERM is an optimal minimax procedure (cf. Theorem~A$^{\prime}$ from  \cite{LM13} recalled in Theorem~\ref{thm:minimax_LM13} below). 

But first, we need a version of the two theorems \ref{thm:RBRhoEstPen} and \ref{thm:LepskiReg} valid for $\widehat f_K$ and  $\widehat g_K$ (that is for the learning problem with no regularization). Let us first introduce the set of assumptions we use and for the sake of shortness we consider the simplification introduced in  Remark~\ref{rem:equiv_L2_norm}. Then, we will introduce the two fixed points driving the statistical properties of $\widehat f_K$ and $\widehat g_K$. 

\begin{Assumption}\label{ass:equivalent_L2}For all $i\in\cI$ and $f\in F$,
$\norm{f(X_i)-f^*(X_i)}_{L^2} = \norm{f(X)-f^*(X)}_{L_2}$,  $$\norm{Y_i-f(X_i)}_{L^2} = \norm{Y-f(X)}_{L_2}, {\rm var}((Y-f^*(X))(f(X)-f^*(X)))\leqslant \theta_m^2 \norm{f(X)-f^*(X)}_{L^2}^2$$ and $\norm{f(X_i)-f^*(X_i)}_{L^2}\leqslant \theta_0 \norm{f(X_i) - f^*(X_i)}_{L^1}$.
\end{Assumption}

The two fixed points associated to this problem are $r_Q(\rho, \gamma_Q)$ and  $r_M(\rho, \gamma_M)$ as in Definition~\ref{def:the-three-parameters-Reg} for $\rho=\infty$:
\begin{align*}
r_Q(\gamma_Q)& = \inf\left\{r>0 : \forall J\subset \cI, |J|\geqslant \frac{N}2,\;\E \sup_{f \in F: \norm{f-f^*}_{L^2_P}\leqslant r} \left|\sum_{i\in J} \eps_i (f-\bayes)(X_i)\right| \leqslant  \gamma_Q|J| r \right\}\enspace,\\
r_M(\gamma_M)&=  \inf\left\{r>0 : \forall J\subset \cI, |J|\geqslant \frac{N}2,\;\E \sup_{f \in F: \norm{f-f^*}_{L^2_P}\leqslant r} \left|\sum_{i\in J} \eps_{i} \zeta_i (f-\bayes)(X_i)\right| \leqslant \gamma_M |J|r^2 \right\}\enspace,
\end{align*}
and let $r^* = r^*(\gamma_Q,\gamma_M) =  \max\{r_Q(\gamma_Q),r_M(\gamma_M)\}$.

\begin{Theorem}\label{thm:learning_withour_reg_1}
Grant Assumptions \ref{ass:equivalent_L2} and let $r_Q(\gamma_Q)$, $r_M(\gamma_M)$ and $r^*$ be defined as above for $\gamma_Q=(384\theta_0)^{-1}$, $\gamma_M = \eps/192$ and $\eps = 1/(32\theta_0^2)$. Assume that $N\geqslant 384 \theta_0^2$ and $|\cO|\leqslant N/(768\theta_0^2)$. Let $K^*$ denote the smallest integer such that $K^*\geqslant  N\eps^2 (r^*)^2/(384 \theta_m^2)$.  Then, for all $K\in[\max(K^*, 8 |\cO|), N/(96 \theta_0^2)]$,  with probability larger than  $1-2\exp(-7K/9216)$, 
the estimators $\ERM{K}$ and $\widehat{g}_K$ defined in \eqref{eq:MOM-ERM} satisfy
\[
\norm{\widehat{g}_K-\bayes}_{L^2_P}, \norm{\ERM{K}-\bayes}_{L^2_P}\leqslant \frac{\theta_m}{\eps}\sqrt{\frac{384 K}{N}}\quad \text{and} \quad R(\widehat{g}_K), R(\ERM{K})\leqslant R(\bayes)+(1+2\eps)\frac{384\theta_m^2 K}{\eps^2 N}\enspace.
\]
\end{Theorem}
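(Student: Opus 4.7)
The plan is to parallel the proof of Theorem~\ref{thm:RBRhoEstPen}, but drop all the machinery tied to regularization: without a penalty and with no a priori control on $\norm{f-f^*}$, one simply applies Lemmas~\ref{lem:UBQPReg} and \ref{lem:proc_multiplicatif} with $\rho=+\infty$, partitions $F$ only according to the $L^2_P$--distance to $f^*$ (no sparsity equation, no $F_3^{(\kappa)}$ to handle by homogeneity), and shows the same deviation bound for both the minmax estimator $\widehat f_K$ and the maxmin estimator $\widehat g_K$.

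Set $r_K^2 = 384\theta_m^2 K/(\eps^2 N)$; for $K\geq K^*$ one has $r_K\geq r^*$, so Assumption~\ref{ass:equivalent_L2} plus Lemmas~\ref{lem:UBQPReg} and \ref{lem:proc_multiplicatif} (applied with the parameters in \eqref{eq:def-params} and with $\rho=+\infty$) yield an event $\Omega(K)$ of probability at least $1-2\exp(-7K/9216)$ on which, for every $f\in F$,
\[
Q_{1/4,K}\bigl((f-f^*)^2\bigr)\geq (4\theta_0)^{-2}\norm{f-f^*}_{L^2_P}^2\quad\text{when}\quad \norm{f-f^*}_{L^2_P}\geq r^*,
\]
and $Q_{3/4,K}\bigl(2\zeta(f-f^*)\bigr)\leq 2\eps\max\bigl(r_K^2,\norm{f-f^*}_{L^2_P}^2\bigr)$, where I already used $P[-2\zeta(f-f^*)]\leq 0$ from convexity and the nearest point theorem. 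Split $F=F_1\cup F_2$ with $F_1=\{g:\norm{g-f^*}_{L^2_P}\leq r_K\}$ and $F_2$ its complement. For $g\in F_1$, drop the quadratic term and control the multiplier one, which gives $T_K(g,f^*)\leq 2\eps r_K^2$; for $g\in F_2$, combine both controls to obtain $T_K(g,f^*)\leq\bigl(2\eps-(4\theta_0)^{-2}\bigr)\norm{g-f^*}_{L^2_P}^2\leq 0$ (for $\eps=1/(32\theta_0^2)$). In particular, $\sup_{g\in F}T_K(g,f^*)\leq 2\eps r_K^2$ on $\Omega(K)$.

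Transferring this to the two estimators is the only place where the argument branches. For $\widehat f_K$, the argmin property gives $T_K(f^*,\widehat f_K)\leq\sup_gT_K(g,\widehat f_K)\leq\sup_gT_K(g,f^*)\leq 2\eps r_K^2$, then the convention $Q_{1/2}(x)\geq -Q_{1/2}(-x)$ yields $T_K(\widehat f_K,f^*)\geq -2\eps r_K^2$. For $\widehat g_K$, the argmax property together with the same convention gives
\[
T_K(\widehat g_K,f^*)\geq\inf_{f}T_K(\widehat g_K,f)\geq\inf_f T_K(f^*,f)\geq -\sup_f T_K(f,f^*)\geq -2\eps r_K^2.
\]
Thus both $\widehat h\in\{\widehat f_K,\widehat g_K\}$ satisfy $T_K(\widehat h,f^*)\geq -2\eps r_K^2$ on $\Omega(K)$.

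From this inequality the conclusion follows as in Lemma~\ref{lem:Conclusion}. Expanding $T_K(\widehat h,f^*)$ via \eqref{eq:quad_multi_decomp} and using $Q_{1/2}(x-y)\geq Q_{1/4}(x)-Q_{3/4}(y)$, if $\norm{\widehat h-f^*}_{L^2_P}>r_K$ then the quadratic lower bound and the multiplier upper bound combine to give $T_K(\widehat h,f^*)\geq\bigl((4\theta_0)^{-2}-2\eps\bigr)\norm{\widehat h-f^*}_{L^2_P}^2$, which contradicts $T_K(\widehat h,f^*)\geq -2\eps r_K^2$ as soon as $\norm{\widehat h-f^*}_{L^2_P}$ exceeds $r_K$ by a fixed factor; hence $\norm{\widehat h-f^*}_{L^2_P}^2\leq r_K^2$. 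Finally, the excess risk identity $R(\widehat h)-R(f^*)=\norm{\widehat h-f^*}_{L^2_P}^2+P[-2\zeta(\widehat h-f^*)]$ is controlled by bounding $P[-2\zeta(\widehat h-f^*)]$ via Lemma~\ref{lem:proc_multiplicatif} and inserting the established $L^2_P$-bound, mimicking the last display in the proof of Lemma~\ref{lem:Conclusion} but with $\lambda=0$; this yields $R(\widehat h)\leq R(f^*)+(1+2\eps)r_K^2$. The main obstacle is purely bookkeeping: making the symmetric treatment of $\widehat f_K$ and $\widehat g_K$ clean, given that $T_K(g,f)=-T_K(f,g)$ only holds up to the usual quantile conventions.
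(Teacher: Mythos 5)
Your overall architecture is exactly the paper's: apply Lemmas~\ref{lem:UBQPReg} and~\ref{lem:proc_multiplicatif} with $\rho=+\infty$, split $F$ by the $L^2_P$-distance to $f^*$, bound $\sup_{g}T_K(g,f^*)$, transfer the bound to $\widehat f_K$ and $\widehat g_K$ via the argmin/argmax properties and the quantile conventions, and conclude as in Lemma~\ref{lem:Conclusion}. However, there is a concrete quantitative gap in your concluding step. You carry over the constant $2\eps$ from the regularized proof, writing $Q_{3/4,K}(2\zeta(f-f^*))\leqslant 2\eps\max(r_K^2,\norm{f-f^*}_{L^2_P}^2)$. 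In the regularized setting the factor $2$ came from the extra hypothesis \eqref{eq:robust_theo_basic}; here it is not needed: Assumption~\ref{ass:equivalent_L2} forces $\overline P_{B_k}[\zeta(f-f^*)]=P[\zeta(f-f^*)]$ on every uncorrupted block (expand $\E(Y_i-f(X_i))^2$ and use the three second-moment equalities), so combining the centered control of Lemma~\ref{lem:proc_multiplicatif} with $P[2\zeta(f-f^*)]\leqslant 0$ gives the bound with a single $\eps$, which is what the paper uses (its \eqref{eq:UBPMNC-nonReg}). This is not cosmetic: with the prescribed $\eps=1/(32\theta_0^2)$ one has $(4\theta_0)^{-2}-2\eps=0$, so your final comparison yields $T_K(\widehat h,f^*)\leqslant 0$ for all far $g$, which never contradicts $T_K(\widehat h,f^*)\geqslant -2\eps r_K^2$; the argument as written produces no bound on $\norm{\widehat h-f^*}_{L^2_P}$ whatsoever, and the theorem's constants leave no slack to absorb a weaker margin (the paper's radius is exactly $\sqrt{32\eps}\,\theta_0\, r_K=r_K=(\theta_m/\eps)\sqrt{384K/N}$, and the risk constant $1+2\eps$ likewise requires the single-$\eps$ bound). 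Replacing $2\eps$ by $\eps$ throughout makes the coefficient $\eps-(4\theta_0)^{-2}=-(32\theta_0^2)^{-1}<0$ and your scheme then goes through exactly as in the paper.

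Two further sign slips should be fixed. First, in your last paragraph the displayed inequality must be an \emph{upper} bound, $T_K(\widehat h,f^*)\leqslant(\eps-(4\theta_0)^{-2})\norm{\widehat h-f^*}_{L^2_P}^2$, obtained from $Q_{1/2}(x-y)\leqslant Q_{3/4}(x)-Q_{1/4}(y)$ applied to the multiplier and quadratic parts; a lower bound of the form you wrote cannot contradict the lower bound $T_K(\widehat h,f^*)\geqslant-\eps r_K^2$. Second, convexity of $F$ and the nearest point theorem give $P[2\zeta(f-f^*)]\leqslant 0$, equivalently $P[-2\zeta(f-f^*)]\geqslant 0$, not $P[-2\zeta(f-f^*)]\leqslant 0$ as you state; the former is the inequality actually used both in bounding $Q_{3/4,K}(2\zeta(f-f^*))$ and in the excess-risk step.
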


Moreover, one can choose adaptively $K$ via Lepski's method. We will do it only for the maxmin estimators $\widehat{g}_K$. Similar result hold for the minmax estimators $\widehat{f}_K$ from straightforward modifications (the same as in Section~\ref{sec:adap_lep_reg}).  Define the confidence regions: for all $J\in[K]$ and  $g\in F$, 
\begin{equation*}
\hat R_J=\left\{g\in F : \fC_{J}(g)\geqslant \frac{-384 \theta_m^2 J}{\eps N}\right\} \mbox{ where }  
\fC_{J}(g) = \inf_{f\in F} T_{J}(g, f) 
\end{equation*}and $T_J(g, f) = \MOM{J}{\ell_f-\ell_g}$ for all $f,g\in F$. Next, for all $J\in [\max(K^*, 8 |\cO|), N/(96 \theta_0^2)]$ , let 
\begin{equation*}
\hat K=\inf\left\{K\in \left[\max(K^*, 8 |\cO|), \frac{N}{96 \theta_0^2}\right] : \bigcap_{J=K}^{K_2}\hat R_J\ne \emptyset\right\} \mbox{ and } \widehat{g}\in \bigcap_{ J=\hat K}^{K_2}\hat R_J\enspace.
\end{equation*}
The following theorem shows the performance of the resulting estimator.
\begin{Theorem}\label{thm:learning_without_reg_lepsky}
 Grant Assumption~\ref{ass:equivalent_L2}. For $\eps=1/(32\theta_0^2)$ and all $K\in [\max(K^*, 8 |\cO|), N/(96 \theta_0^2)]$, with probability larger than $1-2\exp(-K/2304)$,
 \begin{equation*}
  \norm{\widehat{g}-\bayes}_{L^2_P}\leqslant \frac{\theta_m}{\eps}\sqrt{\frac{384 K}{N}},\qquad R(\widehat{g})\leqslant R(\bayes)+ (1+2\eps)\frac{384\theta_m^2 K}{\eps^2 N}\enspace.
 \end{equation*}
\end{Theorem}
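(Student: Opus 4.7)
The strategy follows the same Lepski-style scheme that was used in Section~\ref{sec:adap_lep_reg} to pass from Theorem~\ref{thm:RBRhoEstPen} to Theorem~\ref{thm:LepskiReg}, adapted to the maxmin estimator and to the unregularized setting. The whole argument will be carried out on the good event
\[
\widetilde\Omega(K)=\bigcap_{J=K}^{K_2}\Omega(J),\qquad K_2=\frac{N}{96\theta_0^2},
\]
where $\Omega(J)$ is the intersection of the quadratic event $\Omega_Q(J)$ and of the multiplier event $\Omega_M(J)$ provided by Lemmas~\ref{lem:UBQPReg} and~\ref{lem:proc_multiplicatif} instantiated with $\rho=\infty$ and with the same numerical parameters as in \eqref{eq:def-params}. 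A geometric-series union bound will give $\bP(\widetilde\Omega(K))\geqslant 1-2\exp(-K/2304)$, which is the probability appearing in the statement.

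The first step is a ``no regularization'' version of Lemma~\ref{lem:BoundSup}: on $\Omega(J)$, one controls
\[
\sup_{f\in F}T_J(f,f^*)
\]
by splitting $F$ according to whether $\norm{f-f^*}_{L^2_P}$ is below or above the radius $r_J^*=(\theta_m/\eps)\sqrt{384J/N}$. When $\norm{f-f^*}_{L^2_P}\leqslant r_J^*$ the multiplier inequality \eqref{eq:UBPMNC} (with $\rho=\infty$, whose use does not require the sparsity equation since there is no regularization to control) directly gives $T_J(f,f^*)\lesssim \eps (r_J^*)^2$; when $\norm{f-f^*}_{L^2_P}>r_J^*$ the quadratic lower bound \eqref{eq:LBQPReg} dominates the multiplier upper bound since $2\eps<(4\theta_0)^{-2}$, and $T_J(f,f^*)$ is actually negative. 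In both regimes $\sup_{f\in F}T_J(f,f^*)\leqslant c\,\eps (r_J^*)^2$ for an absolute constant $c$, so that
\[
\fC_J(f^*)=\inf_{f\in F}T_J(f^*,f)=-\sup_{f\in F}T_J(f,f^*)\geqslant -c\,\eps (r_J^*)^2=-\tfrac{384\theta_m^2}{\eps}\tfrac{J}{N}
\]
with the constant from the statement. Hence on $\widetilde\Omega(K)$ the oracle $f^*$ lies in $\hat R_J$ for every $J\in[K,K_2]$, the intersection $\cap_{J=K}^{K_2}\hat R_J$ is non empty, and $\hat K\leqslant K$. In particular $\widehat g\in\hat R_K$, which means
\[
T_K(\widehat g,f^*)\geqslant \fC_K(\widehat g)\geqslant -\tfrac{384\theta_m^2}{\eps}\tfrac{K}{N},\quad\text{i.e. }\quad T_K(f^*,\widehat g)\leqslant \tfrac{384\theta_m^2}{\eps}\tfrac{K}{N}.
\]

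To translate this single inequality into the $L^2_P$ bound and the excess risk bound, I would mimic Lemma~\ref{lem:Conclusion}. Using the quadratic/multiplier decomposition \eqref{eq:quad_multi_decomp},
\[
T_K(f^*,\widehat g)=\MOM{K}{(\widehat g-f^*)^2-2\zeta(\widehat g-f^*)}\geqslant Q_{1/4,K}[(\widehat g-f^*)^2]-Q_{3/4,K}[2\zeta(\widehat g-f^*)].
\]
If $\norm{\widehat g-f^*}_{L^2_P}>r_K^*$, the quadratic control \eqref{eq:LBQPReg} and the multiplier control \eqref{eq:UBPMNC} (now combined with the $L^2_{P_i}/L^2_P$ equivalence from Assumption~\ref{ass:equivalent_L2}) force $T_K(f^*,\widehat g)\geqslant (16\theta_0^2)^{-1}\norm{\widehat g-f^*}_{L^2_P}^2 -2\eps\norm{\widehat g-f^*}_{L^2_P}^2$, which would contradict the inequality above for the chosen $\eps=1/(32\theta_0^2)$ as soon as $\norm{\widehat g-f^*}_{L^2_P}>\theta_m\sqrt{384K/N}/\eps$. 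This yields the $L^2_P$ bound. The excess risk bound then follows by writing $R(\widehat g)-R(f^*)=\norm{\widehat g-f^*}_{L^2_P}^2+P[-2\zeta(\widehat g-f^*)]$ and re-using \eqref{eq:ToutEstDansLeTestReg} with $\lambda=0$, exactly as in the end of the proof of Lemma~\ref{lem:Conclusion}.

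The main obstacle is the first step, i.e.\ proving the unregularized analogue of Lemma~\ref{lem:BoundSup} without the homogeneity argument of Lemma~\ref{lem:Homogeneity}: in that lemma the regularization term $\lambda(\norm{g}-\norm{f^*})$ was essential to push $g$ close to $f^*$ in the far region $F_3^{(\kappa)}$. Here we have to rely on the quadratic MOM process alone to beat the multiplier process in the whole set $\{g:\norm{g-f^*}_{L^2_P}>r_J^*\}$, which is exactly what the choice $\eps=1/(32\theta_0^2)\ll(4\theta_0)^{-2}$ buys us and which explains why no sparsity equation is needed in this setting.
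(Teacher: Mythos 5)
Your proposal is correct and follows essentially the same route as the paper: the paper's proof of Theorem~\ref{thm:learning_without_reg_lepsky} is exactly this Lepski argument, reducing to the bounds $\fC_J(f^*)\geqslant-\eps r_J^2$ and the displays \eqref{eq:quad_proc_nonReg}--\eqref{eq:final_nonReg} already established in the proof of Theorem~\ref{thm:learning_withour_reg_1}, which is precisely the ``unregularized Lemma~\ref{lem:BoundSup}'' you identify as the main step (no sparsity equation or homogeneity argument being needed since the split is purely in $L^2_P$). The only cosmetic difference is that you obtain the $L^2_P$ bound by contradicting a lower bound on $T_K(f^*,\widehat g)$ after a sign flip, whereas the paper contradicts the upper bound \eqref{eq:final_nonReg} on $T_K(\widehat g,f^*)$ directly; these are equivalent under the paper's quantile conventions.
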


The proofs of Theorem~\ref{thm:learning_withour_reg_1} and \ref{thm:learning_without_reg_lepsky} essentially follow the one of Theorem~\ref{thm:RBRhoEstPen} and \ref{thm:LepskiReg}. We will only sketch the proof for the maxmin estimator $\widehat g_K$ given that we already studied the minmax estimators in the regularized setup in Section~\ref{sec:proofs}.

\textit{Proof of Theorem~\ref{thm:learning_withour_reg_1}.}
It follows from Lemma~\ref{lem:UBQPReg} and Lemma~\ref{lem:proc_multiplicatif} for $\rho=\infty$ that there exists an event $\Omega(K)$ such that $\bP(\Omega(K))\geqslant 1-2\exp\left(- 7K/9216\right)$ and,  on $\Omega(K)$, for all $f\in F$,
\begin{enumerate}
	\item  if $\norm{f-\bayes}_{L^2_P}\geqslant r_Q(\gamma_Q)$ then
\begin{equation}\label{eq:quad_proc_nonReg}
 Q_{1/4,K}((f-\bayes)^2)\geqslant \frac1{(4\theta_0)^2}\norm{f-\bayes}^2_{L^2_P}\enspace,
\end{equation}
\item there exists $3K/4$ block $B_k$ with $k\in \cK$, for which
\begin{equation}\label{eq:multi_proc_nonReg}
|(P_{B_k}-\overline{P}_{B_k})[2\zeta(f-\bayes)]|\leqslant \eps\max\left(r_M^2(\gamma_M), \frac{384\theta_m^2}{\eps^2}\frac{K}{N}, \norm{f-f^*}_{L_p^2}^2\right)\enspace. 
\end{equation}
\end{enumerate}
Moreover, it follows from Assumption~\ref{ass:equivalent_L2} that for all $k\in\cK$, 
$\overline{P}_{B_k}[\zeta(f-\bayes)] = P[\zeta(f-\bayes)]$ and $P[2\zeta(f-\bayes)]\leqslant 0$ because of the  convexity of $F$ and the nearest point theorem. Therefore, on the event $\Omega(K)$, for all $f\in F$, 
\begin{align}
\label{eq:UBPMNC-nonReg} Q_{3/4,K}(2\zeta(f-\bayes))&\leqslant \eps\max\left(r_M^2(\gamma_M), \frac{384 \theta_m^2}{\eps^2}\frac{K}{N}, \norm{f-f^*}_{L_p^2}^2\right)
\end{align}and 
\begin{align}\label{eq:oracle-ineg-nonReg}
\nonumber &P[-2\zeta(f-\bayes)]\leqslant P_{B_k}[-2\zeta(f-\bayes)]+\eps\max\left(r_M^2(\gamma_M), \frac{384 \theta_m^2}{\eps^2}\frac{K}{N}, \norm{f-f^*}_{L_p^2}^2\right)\\ 
\nonumber &\leqslant Q_{1/4,K}[(f-\bayes)^2-2\zeta(f-\bayes)]+\eps\max\left(r_M^2(\gamma_M), \frac{384 \theta_m^2}{\eps^2}\frac{K}{N}, \norm{f-f^*}_{L_p^2}^2\right)\\
&\leqslant T_{K}(\bayes,f)+\eps\max\left(r_M^2(\gamma_M), \frac{384 \theta_m^2}{\eps^2}\frac{K}{N}, \norm{f-f^*}_{L_p^2}^2\right)\enspace.
\end{align}

Let us place ourself on the event $\Omega(K)$ and let $r_K$ be such that  $r_K^2 =  384 \theta_m^2 K / (\eps^2 N)$. Given that $r_K\geqslant r^*$, it follows from \eqref{eq:quad_proc_nonReg} and \eqref{eq:UBPMNC-nonReg} that if $f\in F$ is such that $\norm{f-f^*}_{L_P^2}\geqslant r_K$ then 
\begin{equation}\label{eq:final_nonReg}
T_K(f, f^*)\leqslant Q_{3/4, K}(2 \zeta(f-f^*)) - Q_{1/4}((f-f^*))\leqslant \left(\eps - \frac{1}{16 \theta_0^2}\right)\norm{f-f^*}_{L^2_P}^2\leqslant \left(\frac{-1}{32 \theta_0^2}\right)\norm{f-f^*}_{L^2_P}^2
\end{equation}for $\eps = 1/(32\theta_0^2)$ and if $\norm{f-f^*}_{L_P^2}\leqslant r_K$ then $T_K(f, f^*)\leqslant Q_{3/4, K}(2 \zeta(f-f^*))\leqslant \eps r_K^2$. In particular,
\begin{equation*}
\fC_K(f^*) = \inf_{f\in F} T_K(f^*, f) = -\sup_{f\in F} T_K(f, f^*) \geqslant -\eps r_K^2
\end{equation*} and since $\fC_K(\hat g_K)\geqslant \fC_K(f^*)$ one has $\fC_K(\hat g_K)\geqslant - \eps r_K^2$. On the other hand, we have $\fC_K(\hat g_K) = \inf_{f\in F} T_K(\hat g_K, f)\leqslant T_K(\hat g_K, f^*)$. Therefore, $ T_K(\hat g_K, f^*)\geqslant  -\eps r_K^2$. But, we know from \eqref{eq:final_nonReg} that if $g\in F$ is such that $\norm{g-f^*}_{L^2_P}> \sqrt{32\eps} \theta_0 r_K$ then $T_K(g, f^*)\leqslant  (-1/(32\theta_0^2))\norm{g-f^*}_{L^2_P}^2<-\eps r_K^2$. Therefore, one necessarily have $\norm{\hat g_K-f^*}_{L^2_P}\leqslant \sqrt{32\eps} \theta_0 r_K = r_K$.

The oracle inequality now follows from \eqref{eq:oracle-ineg-nonReg}:
\begin{align*}
R(\hat g_K)-R(f^*) &= \norm{\hat g_K-f^*}_{L^2_P}^2 + P[-2\zeta(\hat g _K - f^*)]
\leqslant r_K^2  + T_K(f^*, \hat g_K) + \eps r_K^2 \leqslant (1+2\eps)r_K^2\enspace.   
\end{align*}
\endproof

\textit{Proof of Theorem~\ref{thm:learning_without_reg_lepsky}.}
Consider the same notations as in the proof of Theorem~\ref{thm:learning_withour_reg_1} and denote $K_2 = N/(96 \theta_0^2)$. It follows from the proof of Theorem~\ref{thm:learning_withour_reg_1}, that with probability larger than $1-2 \sum_{J=K}^{K_2}\exp(-7J/9216)$, for all $J\in[K, K_2]$, $\fC_J(f^*)\geqslant -\eps r_J^2$ therefore, $f^*\in \hat R_J$ and so $\hat K\leqslant K$ . The latter implies that $\widehat g\in \hat R_K$which, by using the same argument as in the end of the proof of Theorem~\ref{thm:learning_withour_reg_1} implies that $\norm{\widehat g-f^*}_{L^2_P}\leqslant r_K$ and then $R(\widehat g)-R(f^*)\leqslant (1+2\eps) r_K$.
\endproof

\textbf{Example: Ordinary least squares.} Let us consider the case where $F=\{\inr{\cdot, t} : t\in\R^d\}$ is the set of all linear functionals indexed by $\R^d$. We assume that for all $i\in\cI$ and $t\in\R^d$,
\begin{enumerate}
	\item  $\E\inr{X_i, t}^2 = \E \inr{X, t}^2$,
	\item $\E(Y_i-\inr{X_i, t})^2 = \E(Y-\inr{X, t})^2$,
	\item $\E(Y-\inr{X, t^*})^2\inr{X, t}^2\leqslant \theta_m^2 \E\inr{X, t}^2$,
	\item $\sqrt{\E \inr{X, t}^2}\leqslant \theta_0 \E |\inr{X, t}|$.
\end{enumerate}
Let us now compute the fixed points $r_Q(\gamma_Q)$ and $r_M(\gamma_M)$. The proof essentially follows from Example~1 in \cite{k:05}. Let $J\subset \cI$ be such that $|J|\geqslant N/2$. Denote by $V\subset\R^d$ the smallest linear span containing almost surely $X$. Let $\varphi_1, \cdots, \varphi_D$ be an orthonormal basis of $V$ with respect to the Hilbert norm $\norm{t}=\E \inr{X, t}^2$. It follows from Cauchy-Schwartz inequality that
\begin{align*}
&\E \sup_{f \in F: \norm{f-f^*}_{L^2_P}\leqslant r} \left|\sum_{i\in J} \eps_i (f-\bayes)(X_i)\right| = \E \sup_{\sum_{j=1}^D \theta_j^2\leqslant r^2} \left|\sum_{j=1}^D \theta_j\sum_{i\in J}\eps_i\inr{X_i, \varphi_j} \right|\leqslant r \E \left(\sum_{j=1}^D\left(\sum_{i\in J}\eps_i \inr{X_i, \varphi_j}\right)^2 \right)^{1/2}\\
&\leqslant r \sqrt{\sum_{j=1}^D \sum_{i\in J}\E \inr{X_i, \varphi_j}^2} = r \sqrt{D |J|}.
\end{align*}As a consequence, $r_Q(\gamma_Q) = 0$ if $\gamma_Q |J|\geqslant \sqrt{D |J|}$, i.e. if $\gamma_Q\geqslant \sqrt{D/|J|}$. Using the same arguments as above, we  have
\begin{align*}
 &\E \sup_{f \in F: \norm{f-f^*}_{L^2_P}\leqslant r} \left|\sum_{i\in J} \eps_i\zeta_i (f-\bayes)(X_i)\right| \leqslant r \sqrt{\sum_{j=1}^D \sum_{i\in J} \E \zeta_i \inr{X_i, \varphi_j}^2}\leqslant r \theta_m \sqrt{D |J|}.
 \end{align*} Therefore, $r_M(\gamma_M)\leqslant (\theta_m/\gamma_M)\sqrt{D/|J|}\leqslant (\theta_m/\gamma_M)\sqrt{2D/N}$ and $K^*=D$. 

 Now, it follows from Theorem~\ref{thm:learning_without_reg_lepsky}, that if $N\geqslant 2(384\theta_0)^2 D$ and $|\cO|\leqslant N/(768\theta_0^2)$ then the MOM OLS with adaptively chosen number of blocks $K$ is such that for all $K\in\left[\max\left(D, 8 |\cO|\right), N/(96 \theta_0^2)\right]$, with probability at least $1-2\exp(-K/2304)$, 
 \begin{equation}\label{eq:results_MOM_OLS}
 \sqrt{\E \inr{\hat t - t^*, X}^2}\leqslant \frac{\theta_m}{\eps} \sqrt{\frac{384 K}{N}}. 
 \end{equation}
  A consequence of \eqref{eq:results_MOM_OLS}, is that if the number of outliers is less than $D/8$ then the MOM OLS recovers the classical $D/N$ rate of convergence for the means square error. This happens with probability at least $1-2 \exp(-D/2304)$, that is with an exponentially large probability. This is a remarkable fact given that we only made assumptions on the $L^2$ moments of the design $X$. Moreover, this result is obtained under the only assumption on the informative data that they have equivalent $L^2$ moments to the one of the distribution of interest $P$. Therefore, only very little information on $P$ needs to be brought to the statistician via the data; moreover those data can be corrupted up to $D/8$ complete outliers. Finally, note that we did not assume isotropicity of the design $X$ to obtain \eqref{eq:results_MOM_OLS}. Therefore,  \eqref{eq:results_MOM_OLS} holds even for very degenerate design $X$ and the price we pay is the true dimension of $X$ that is of the dimension of the smallest linear span containing almost surely $X$ not the one of the whole space $\R^d$.

\section{Minimax optimality of Theorem~\ref{thm:RBRhoEstPen}, \ref{thm:LepskiReg}, \ref{thm:learning_withour_reg_1} and \ref{thm:learning_without_reg_lepsky}} 
\label{sec:minimax_optimality_of_}
The aim of this section is to show that the rates obtained in Theorems~\ref{thm:RBRhoEstPen}, \ref{thm:LepskiReg}, \ref{thm:learning_withour_reg_1} and \ref{thm:learning_without_reg_lepsky} are optimal in a minimax sense. To that end we recall a minimax lower bound result from \cite{LM13}.

\begin{Theorem}[Theorem~A$^{\prime}$ in \cite{LM13}]\label{thm:minimax_LM13}
    There exists an absolute constant $c_0$ for which the following holds. Let $X$ be a random variable taking values in $\cX$. Let $F$ be a class of functions such that $\E f^2(X)<\infty$. Assume that $F$ is star-shaped around one of its point (i.e. there exists $f_0\in F$ such that for all $f\in F$ the segment $[f_0, f]$ belongs to $F$). Let $\zeta$ be a centered real-valued Gaussian variable with variance $\sigma$ independent of $X$ and for all $f^*\in F$ denote by $Y^{f^*}$ the target variable
    \begin{equation}\label{eq:model_gaussian_noise}
Y^{f^*} = f^*(X)+\zeta.
\end{equation}

Let $0<\delta_N<1$ and $r_N^2>0$. Let $\hat f_N$ be a statistics (i.e. a measurable function from $(\cX\times \R)^N$ to $L^2(P_X)$ where $P_X$ is the probability distribution of $X$). Assume that $\hat f_N$ is such that for all $f^*\in F$, with probability at least $1-\delta_N$, 
\begin{equation*}
\norm{\hat f_N(\cD) - f^*}_{L^2_P}^2 = R(\hat f_N(\cD)) -  R(f^*)\leqslant r_N^2
\end{equation*} where $\cD=\{ (X_i, Y_i) : i\in[N]\}$ is a set of $N$ i.i.d. copies of $(X, Y^{f^*})$. Then, necessarily, one has 
\begin{equation*}
r^2_N\geqslant \min\left(c_0 \sigma^2 \frac{\log(1/\delta_N)}{N}, \frac{1}{4}{\rm diam}(F, L^2(P_X))\right)
\end{equation*}where ${\rm diam}(F, L^2(P_X))$ denotes the $L^2(P_X)$ diameter of $F$.
\end{Theorem}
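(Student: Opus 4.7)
The plan is to prove this minimax lower bound by Le Cam's two-point testing method, using the Bretagnolle--Huber inequality to extract the sharp logarithmic dependence on $\delta_N$.

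First, I would construct two competing hypotheses in $F$ at a controlled $L^2(P_X)$-distance. Since $F$ is star-shaped around some $f_0 \in F$, the convex combinations $f_0 + \lambda(g - f_0)$ belong to $F$ for every $g \in F$ and $\lambda \in [0,1]$. Picking $g \in F$ (approximately) realizing $\mathrm{diam}(F, L^2(P_X))$ and tuning $\lambda$, for any $r \leqslant \tfrac{1}{2}\mathrm{diam}(F, L^2(P_X))$ we obtain $f_0^\star, f_1^\star \in F$ with $\|f_0^\star - f_1^\star\|_{L^2(P_X)} = 2r$.

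Second, I would reduce estimation to binary testing. Suppose, for contradiction, that $\hat f_N$ achieves accuracy $r^2$ with confidence $1-\delta_N$ uniformly on $F$. Since the $L^2(P_X)$-balls of radius $r$ centered at $f_0^\star$ and $f_1^\star$ are disjoint, the plug-in test $\psi(\cD) = \arg\min_{i \in \{0,1\}}\|\hat f_N(\cD) - f_i^\star\|_{L^2(P_X)}$ obeys $\bP_{f_i^\star}(\psi \neq i) \leqslant \delta_N$ for each $i$, so the total error of $\psi$ is at most $2\delta_N$. On the other hand, by the Bretagnolle--Huber inequality, any test satisfies
\[
\bP_{f_0^\star}(\psi = 1) + \bP_{f_1^\star}(\psi = 0) \geqslant \tfrac{1}{2}\exp\bigl(-\mathrm{KL}(\bP_{f_0^\star}\|\bP_{f_1^\star})\bigr).
\]
For the Gaussian model~\eqref{eq:model_gaussian_noise}, the two product laws differ only through the mean of the Gaussian noise, so tensorization and the closed form of the KL between two Gaussians with common variance $\sigma^2$ yield
\[
\mathrm{KL}(\bP_{f_0^\star}\|\bP_{f_1^\star}) = \frac{N}{2\sigma^2}\|f_0^\star - f_1^\star\|_{L^2(P_X)}^2 = \frac{2Nr^2}{\sigma^2}.
\]

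Combining the two inequalities gives $4\delta_N \geqslant \exp(-2Nr^2/\sigma^2)$, hence $r^2 \geqslant \tfrac{\sigma^2}{2N}\log\tfrac{1}{4\delta_N}$. This contradicts $r_N^2 < c_0 \sigma^2 \log(1/\delta_N)/N$ for $c_0$ chosen small enough, and the diameter restriction on the feasible $r$ accounts for the second term in the minimum. The main technical subtlety is extracting the $\log(1/\delta_N)$ factor rather than a polynomial dependence on $\delta_N$, which forces the use of Bretagnolle--Huber instead of Pinsker's inequality; everything else reduces to standard Gaussian KL computations and bookkeeping of absolute constants.
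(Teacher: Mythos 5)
This statement is quoted from \cite{LM13} (Theorem~A$^{\prime}$); the paper itself gives no proof, so there is nothing internal to compare against, but your argument is the standard two-point reduction and is essentially the same kind of Gaussian-shift argument that underlies the original: reduce estimation to a binary test between two functions in $F$ at distance of order $r$, and lower bound the testing error for the Gaussian regression model, which is what produces the $\sigma^2\log(1/\delta_N)/N$ term; using Bretagnolle--Huber rather than an exact Neyman--Pearson/Gaussian-tail computation is a legitimate variant and costs only absolute constants. Two small points deserve attention. First, star-shapedness around $f_0$ only guarantees a point $g\in F$ with $\norm{f_0-g}_{L^2(P_X)}\geqslant \tfrac12\mathrm{diam}(F,L^2(P_X))$ (up to $\eps$), and both test points must lie on the segment $[f_0,g]$, so the achievable separation is $2r\leqslant \tfrac12\mathrm{diam}$, i.e.\ $r\leqslant \tfrac14\mathrm{diam}$, not $r\leqslant\tfrac12\mathrm{diam}$ as you wrote; this only changes the constant in front of the diameter term (and you should take the separation slightly larger than $2r_N$, or work with open balls, so that the plug-in test has no ties). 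Second, the inequality $4\delta_N\geqslant \exp(-2Nr^2/\sigma^2)$ is vacuous as soon as $\delta_N\geqslant 1/4$, so your argument (like any two-point test argument) establishes the bound only for $\delta_N$ below a fixed numerical constant; this is harmless here, since the theorem is invoked in the paper with $\delta_N$ exponentially small, and the statement for $\delta_N$ close to $1$ cannot be extracted from a two-point comparison anyway. With those caveats your derivation is correct and complete in the relevant regime.
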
 

Theorem~\ref{thm:minimax_LM13} proves that if the statistical model \eqref{eq:model_gaussian_noise} holds then there is a strong connexion between the deviation parameter $\delta_N$ and the uniform rate of convergence $r_N^2$ over $F$ : the smaller $\delta_N$, the larger $r^2_N$. We now use this result to prove that Theorems~\ref{thm:RBRhoEstPen}, \ref{thm:LepskiReg}, \ref{thm:learning_withour_reg_1} and \ref{thm:learning_without_reg_lepsky} are essentially optimal. 

In Theorems~\ref{thm:learning_withour_reg_1} and \ref{thm:learning_without_reg_lepsky}, the deviation bounds are $1-c_1\exp(-c_2K)$ and the residual terms in the $L^2_P$ (to the square) estimation rates are like $c_3 K/N$. Therefore, setting $\delta_N = c_1\exp(-c_2K)$ then Theorem~~\ref{thm:minimax_LM13} proves that no procedure can do better than 
\begin{equation*}
\min\left(c_0 \sigma^2 \frac{\log(1/\delta_N)}{N}, \frac{1}{4}{\rm diam}(F, L^2(P_X))\right) = \min\left(c_4\sigma^2 \frac{K}{N}, \frac{1}{4}{\rm diam}(F, L^2(P_X))\right).
\end{equation*} Given  that one can obviously bound from above the performance of $\widehat f_K$ and $\widehat{g}_K$ as well as those of $\widehat f$ and $\widehat{g}$ in Theorems~\ref{thm:learning_withour_reg_1} and \ref{thm:learning_without_reg_lepsky} by the $L^2_P$-diameter of $F$ (because $f^*$ and those estimators are in $F$), then the result of Theorem~\ref{thm:learning_withour_reg_1} and \ref{thm:learning_without_reg_lepsky} are optimal even in the very strong Gaussian setup with i.i.d. data satisfying a Gaussian regression model like \eqref{eq:model_gaussian_noise}. The remarkable point is that Theorem~\ref{thm:learning_withour_reg_1} and \ref{thm:learning_without_reg_lepsky} have been obtained under much weaker assumptions than those considered in Theorem~\ref{thm:minimax_LM13} since outliers may corrupt the dataset, the noise and the design do not have to be independent, the informative data are only assumed to have a $L^2$ norm equivalent to the one of $P$ and may therefore be heavy tailed. 

Given the form of the deviation bounds in  Theorems~\ref{thm:RBRhoEstPen} and \ref{thm:LepskiReg} and given that $r(\rho_K)\sim K/N$ and that $r(2\rho_K)\sim K/N$ (if one assumes a weak regularity assumption on the class $F$) then the same conclusions hold for Theorems~\ref{thm:RBRhoEstPen} and \ref{thm:LepskiReg}: there is no procedure doing better than the MOM estimators even in the very good framework of Theorem~\ref{thm:minimax_LM13}.






\begin{footnotesize}
\bibliographystyle{plain}
\bibliography{biblio}

\def\cprime{$'$}
\begin{thebibliography}{10}

\bibitem{notebook_mom_lasso}
Notebook available at https://github.com/lecueguillaume/MOMpower.

\bibitem{MR1688610}
Noga Alon, Yossi Matias, and Mario Szegedy.
\newblock The space complexity of approximating the frequency moments.
\newblock {\em J. Comput. System Sci.}, 58(1, part 2):137--147, 1999.
\newblock Twenty-eighth Annual ACM Symposium on the Theory of Computing
  (Philadelphia, PA, 1996).

\bibitem{MR2602303}
Sylvain Arlot and Alain Celisse.
\newblock A survey of cross-validation procedures for model selection.
\newblock {\em Stat. Surv.}, 4:40--79, 2010.

\bibitem{MR3595142}
Sylvain Arlot and Matthieu Lerasle.
\newblock Choice of {$V$} for {$V$}-fold cross-validation in least-squares
  density estimation.
\newblock {\em J. Mach. Learn. Res.}, 17:Paper No. 208, 50, 2016.

\bibitem{MR2906886}
Jean-Yves Audibert and Olivier Catoni.
\newblock Robust linear least squares regression.
\newblock {\em Ann. Statist.}, 39(5):2766--2794, 2011.

\bibitem{bach_support}
Francis~R. Bach.
\newblock Structured sparsity-inducing norms through submodular functions.
\newblock In {\em Advances in Neural Information Processing Systems 23: 24th
  Annual Conference on Neural Information Processing Systems 2010. Proceedings
  of a meeting held 6-9 December 2010, Vancouver, British Columbia, Canada.},
  pages 118--126, 2010.

\bibitem{MR3466175}
Krishnakumar Balasubramanian and Ming Yuan.
\newblock Discussion of ``{E}stimating structured high-dimensional covariance
  and precision matrices: optimal rates and adaptive estimation'' [
  {MR}3466172].
\newblock {\em Electron. J. Stat.}, 10(1):71--73, 2016.

\bibitem{MR3565484}
Y.~Baraud and L.~Birg{\'e}.
\newblock Rho-estimators for shape restricted density estimation.
\newblock {\em Stochastic Process. Appl.}, 126(12):3888--3912, 2016.

\bibitem{BarBir2017RhoAgg2}
Y.~Baraud and L.~Birg{\'e}.
\newblock Rho-estimators revisited: General theory and applications.
\newblock {\em Preprint available in arXiv:1605.05051}, 2017.

\bibitem{MR3595933}
Y.~Baraud, L.~Birg\'e, and M.~Sart.
\newblock A new method for estimation and model selection: {$\rho$}-estimation.
\newblock {\em Invent. Math.}, 207(2):425--517, 2017.

\bibitem{MR2834722}
Yannick Baraud.
\newblock Estimator selection with respect to {H}ellinger-type risks.
\newblock {\em Probab. Theory Related Fields}, 151(1-2):353--401, 2011.

\bibitem{barnett1994outliers}
Vic Barnett and Toby Lewis.
\newblock {\em Outliers in statistical data}, volume~3.
\newblock Wiley New York, 1994.

\bibitem{BLT16}
Pierre Bellec, Guillaume Lecu{\'e}, and Alexandre Tsybakov.
\newblock Slope meets lasso: Improved oracle bounds and optimality.
\newblock Technical report, CREST, CNRS, Universit{\'e} Paris Saclay, 2016.

\bibitem{MR2533469}
Peter~J. Bickel, Ya'acov Ritov, and Alexandre~B. Tsybakov.
\newblock Simultaneous analysis of lasso and {D}antzig selector.
\newblock {\em Ann. Statist.}, 37(4):1705--1732, 2009.

\bibitem{MR2219712}
Lucien Birg{\'e}.
\newblock Model selection via testing: an alternative to (penalized) maximum
  likelihood estimators.
\newblock {\em Ann. Inst. H. Poincar\'e Probab. Statist.}, 42(3):273--325,
  2006.

\bibitem{slope1}
Ma{\l}gorzata Bogdan, Ewout van~den Berg, Chiara Sabatti, Weijie Su, and
  Emmanuel~J. Cand{\`e}s.
\newblock S{LOPE}---{A}daptive variable selection via convex optimization.
\newblock {\em Ann. Appl. Stat.}, 9(3):1103--1140, 2015.

\bibitem{BouLugMass13}
St{\'e}phane Boucheron, G{\'a}bor Lugosi, and Pascal Massart.
\newblock {\em Concentration Inequalities: A Nonasymptotic Theory of
  Independence.}
\newblock Oxford University Press, 2013.
\newblock ISBN 978-0-19-953525-5.

\bibitem{MR3405602}
Christian Brownlees, Emilien Joly, and G{\'a}bor Lugosi.
\newblock Empirical risk minimization for heavy-tailed losses.
\newblock {\em Ann. Statist.}, 43(6):2507--2536, 2015.

\bibitem{MR2807761}
Peter B{\"u}hlmann and Sara van~de Geer.
\newblock {\em Statistics for high-dimensional data}.
\newblock Springer Series in Statistics. Springer, Heidelberg, 2011.
\newblock Methods, theory and applications.

\bibitem{MR3052407}
Olivier Catoni.
\newblock Challenging the empirical mean and empirical variance: a deviation
  study.
\newblock {\em Ann. Inst. Henri Poincar\'e Probab. Stat.}, 48(4):1148--1185,
  2012.

\bibitem{davies2005breakdown}
P.~Laurie Davies and Ursula Gather.
\newblock Breakdown and groups.
\newblock {\em The Annals of Statistics}, 33(3):977--1035, 2005.

\bibitem{davies1993aspects}
Patrick~L Davies.
\newblock Aspects of robust linear regression.
\newblock {\em The Annals of statistics}, pages 1843--1899, 1993.

\bibitem{MR1666908}
V{\'{\i}}ctor~H. de~la Pe{\~n}a and Evarist Gin{\'e}.
\newblock {\em Decoupling}.
\newblock Probability and its Applications (New York). Springer-Verlag, New
  York, 1999.
\newblock From dependence to independence, Randomly stopped processes.
  $U$-statistics and processes. Martingales and beyond.

\bibitem{DGL:96}
Luc Devroye, L{\'a}szl{\'o} Gy{\"o}rfi, and G{\'a}bor Lugosi.
\newblock {\em A probabilistic theory of pattern recognition}, volume~31 of
  {\em Applications of Mathematics (New York)}.
\newblock Springer-Verlag, New York, 1996.

\bibitem{MR3576558}
Luc Devroye, Matthieu Lerasle, Gabor Lugosi, and Roberto~I. Oliveira.
\newblock Sub-{G}aussian mean estimators.
\newblock {\em Ann. Statist.}, 44(6):2695--2725, 2016.

\bibitem{dodge1987introduction}
Yadolah Dodge.
\newblock An introduction to statistical data analysis l1-norm based.
\newblock {\em Statistical data analysis based on the L1-norm and related
  methods}, pages 1--22, 1987.

\bibitem{donoho1982breakdown}
David~L Donoho.
\newblock Breakdown properties of multivariate location estimators.
\newblock {\em Ph. D. qualifying paper, Dept. Statistics, Harvard University,
  Boston.}, 1982.

\bibitem{donoho1992breakdown}
David~L Donoho and Miriam Gasko.
\newblock Breakdown properties of location estimates based on halfspace depth
  and projected outlyingness.
\newblock {\em The Annals of Statistics}, pages 1803--1827, 1992.

\bibitem{donoho1983notion}
David~L Donoho and Peter~J Huber.
\newblock The notion of breakdown point.
\newblock {\em A festschrift for erich l. lehmann}, 157184, 1983.

\bibitem{Fan1}
J.~Fan, Q.~Li, and Y.~Wang.
\newblock Estimation of high-dimensional mean regression in absence of symmetry
  and light-tail assumptions.
\newblock {\em Journal of Royal Statistical Society B}, 79:247--265, 2017.

\bibitem{MR3307991}
Christophe Giraud.
\newblock {\em Introduction to high-dimensional statistics}, volume 139 of {\em
  Monographs on Statistics and Applied Probability}.
\newblock CRC Press, Boca Raton, FL, 2015.

\bibitem{hampel1968contribution}
Frank~R Hampel.
\newblock Contribution to the theory of robust estimation.
\newblock {\em Ph. D. Thesis, University of California, Berkeley}, 1968.

\bibitem{hampel1971general}
Frank~R Hampel.
\newblock A general qualitative definition of robustness.
\newblock {\em The Annals of Mathematical Statistics}, pages 1887--1896, 1971.

\bibitem{hampel1973robust}
Frank~R Hampel.
\newblock Robust estimation: A condensed partial survey.
\newblock {\em Probability Theory and Related Fields}, 27(2):87--104, 1973.

\bibitem{hampel1974influence}
Frank~R Hampel.
\newblock The influence curve and its role in robust estimation.
\newblock {\em Journal of the American Statistical Association},
  69(346):383--393, 1974.

\bibitem{hampel1975beyond}
Frank~R Hampel.
\newblock Beyond location parameters: Robust concepts and methods.
\newblock {\em Bulletin of the International statistical Institute},
  46(1):375--382, 1975.

\bibitem{hampel1986linear}
Frank~R Hampel, Elvezio~M Ronchetti, Peter~J Rousseeuw, and Werner~A Stahel.
\newblock Linear models: robust estimation.
\newblock {\em Robust Statistics: The Approach Based on Influence Functions},
  pages 307--341, 1986.

\bibitem{MR0214251}
J.~L. Hodges, Jr.
\newblock Efficiency in normal samples and tolerance of extreme values for some
  estimates of location.
\newblock In {\em Proc. {F}ifth {B}erkeley {S}ympos. {M}ath. {S}tatist. and
  {P}robability ({B}erkeley, {C}alif., 1965/66)}, pages Vol. I: Statistics, pp.
  163--186. Univ. California Press, Berkeley, Calif., 1967.

\bibitem{MR0161415}
Peter~J. Huber.
\newblock Robust estimation of a location parameter.
\newblock {\em Ann. Math. Statist.}, 35:73--101, 1964.

\bibitem{huber1967behavior}
Peter~J Huber.
\newblock The behavior of maximum likelihood estimates under nonstandard
  conditions.
\newblock In {\em Proceedings of the fifth Berkeley symposium on mathematical
  statistics and probability}, volume~1, pages 221--233. Berkeley, CA, 1967.

\bibitem{MR2488795}
Peter~J. Huber and Elvezio~M. Ronchetti.
\newblock {\em Robust statistics}.
\newblock Wiley Series in Probability and Statistics. John Wiley \& Sons, Inc.,
  Hoboken, NJ, second edition, 2009.

\bibitem{huber2009robust}
Peter~J Huber and Elvezio~M Ronchetti.
\newblock Robust statistics. hoboken.
\newblock {\em NJ: Wiley. doi}, 10(1002):9780470434697, 2009.

\bibitem{MR855970}
Mark~R. Jerrum, Leslie~G. Valiant, and Vijay~V. Vazirani.
\newblock Random generation of combinatorial structures from a uniform
  distribution.
\newblock {\em Theoret. Comput. Sci.}, 43(2-3):169--188, 1986.

\bibitem{MR2829871}
Vladimir Koltchinskii.
\newblock {\em Oracle inequalities in empirical risk minimization and sparse
  recovery problems}, volume 2033 of {\em Lecture Notes in Mathematics}.
\newblock Springer, Heidelberg, 2011.
\newblock Lectures from the 38th Probability Summer School held in Saint-Flour,
  2008, {\'E}cole d'{\'E}t{\'e} de Probabilit{\'e}s de Saint-Flour.
  [Saint-Flour Probability Summer School].

\bibitem{k:05}
Vladimir Koltchinskii.
\newblock Local {R}ademacher {C}omplexities and {O}racle {I}nequalities in
  {R}isk {M}inimization.
\newblock {\em Ann. Statist.}, 34(6):1--50, December 2006.
\newblock 2004 IMS Medallion Lecture.

\bibitem{MR3431642}
Vladimir Koltchinskii and Shahar Mendelson.
\newblock Bounding the smallest singular value of a random matrix without
  concentration.
\newblock {\em Int. Math. Res. Not. IMRN}, (23):12991--13008, 2015.

\bibitem{Shahar-Vladimir}
Vladimir Koltchinskii and Shahar Mendelson.
\newblock Bounding the {S}mallest {S}ingular {V}alue of a {R}andom {M}atrix
  {W}ithout {C}oncentration.
\newblock {\em Int. Math. Res. Not. IMRN}, (23):12991--13008, 2015.

\bibitem{MR0334381}
L.~Le~Cam.
\newblock Convergence of estimates under dimensionality restrictions.
\newblock {\em Ann. Statist.}, 1:38--53, 1973.

\bibitem{MR856411}
Lucien Le~Cam.
\newblock {\em Asymptotic methods in statistical decision theory}.
\newblock Springer Series in Statistics. Springer-Verlag, New York, 1986.

\bibitem{MOM1}
G.~Lecu{\'e} and M.~Lerasle.
\newblock Learning from mom's principle : Le cam's approach.
\newblock Technical report, CNRS, ENSAE, Paris-sud, 2017.

\bibitem{LM_reg1}
G.~Lecu{\'e} and S.~Mendelson.
\newblock Regularization and the small-ball method i: sparse recovery.
\newblock Technical report, CNRS, ENSAE, Technion, MSI ANU, 2016.
\newblock To appear in the Annals of Statistics.

\bibitem{LM_reg2}
G.~Lecu{\'e} and S.~Mendelson.
\newblock Regularization and the small-ball method ii: complexity dependent
  error rates.
\newblock Technical report, CNRS, ENSAE, Technion, MSI ANU, 2017.
\newblock To appear in Journal of machine learning research.

\bibitem{LM13}
Guillaume Lecu{\'e} and Shahar Mendelson.
\newblock Learning subgaussian classes: Upper and minimax bounds.
\newblock Technical report, CNRS, Ecole polytechnique and Technion, 2013.

\bibitem{LM_lin_agg}
Guillaume Lecu{\'e} and Shahar Mendelson.
\newblock Performance of empirical risk minimization in linear aggregation.
\newblock Technical report, CNRS, Ecole Polytechnique and Technion, 2014.
\newblock To appear in Bernoulli journal.

\bibitem{LM_compressed}
Guillaume Lecu{\'e} and Shahar Mendelson.
\newblock Sparse recovery under weak moment assumptions.
\newblock Technical report, CNRS, Ecole Polytechnique and Technion, 2014.
\newblock To appear in Journal of the European Mathematical Society.

\bibitem{LM_reg_comp}
Guillaume Lecu{\'e} and Shahar Mendelson.
\newblock Regularization and the small-ball method i: sparse recovery.
\newblock Technical report, CNRS, ENSAE and Technion, I.I.T., 2016.

\bibitem{MR1849347}
Michel Ledoux.
\newblock {\em The concentration of measure phenomenon}, volume~89 of {\em
  Mathematical Surveys and Monographs}.
\newblock American Mathematical Society, Providence, RI, 2001.

\bibitem{MR2386087}
Karim Lounici.
\newblock Sup-norm convergence rate and sign concentration property of {L}asso
  and {D}antzig estimators.
\newblock {\em Electron. J. Stat.}, 2:90--102, 2008.

\bibitem{LugosiMendelson2017}
Gabor Lugosi and Shahar Mendelson.
\newblock Regularization, sparse recovery, and median-of-means tournaments.
\newblock {\em Preprint available on arXiv:1701.04112}.

\bibitem{LugosiMendelson2016}
Gabor Lugosi and Shahar Mendelson.
\newblock Risk minimization by median-of-means tournaments.
\newblock {\em Preprint available on ArXive:1608.00757}.

\bibitem{LugosiMendelson2017-2}
Gabor Lugosi and Shahar Mendelson.
\newblock Sub-gaussian estimators of the mean of a random vector.
\newblock {\em Preprint available on arXiv:1702.00482}.

\bibitem{maronna2006robust}
Ricardo~A. Maronna, R.~Douglas Martin, and Victor~J. Yohai.
\newblock {\em Robust statistics. Theory and Methods.}
\newblock John Wiley \& Sons, Chichester. ISBN, 2006.

\bibitem{massart1986least}
Desire~L Massart, Leonard Kaufman, Peter~J Rousseeuw, and Annick Leroy.
\newblock Least median of squares: a robust method for outlier and model error
  detection in regression and calibration.
\newblock {\em Analytica Chimica Acta}, 187:171--179, 1986.

\bibitem{MR2278363}
Nicolai Meinshausen and Peter B{\"u}hlmann.
\newblock High-dimensional graphs and variable selection with the lasso.
\newblock {\em Ann. Statist.}, 34(3):1436--1462, 2006.

\bibitem{MR2488351}
Nicolai Meinshausen and Bin Yu.
\newblock Lasso-type recovery of sparse representations for high-dimensional
  data.
\newblock {\em Ann. Statist.}, 37(1):246--270, 2009.

\bibitem{shahar_general_loss}
Shahar Mendelson.
\newblock Learning without concentration for general loss function.
\newblock Technical report, Technion, I.I.T., 2013.
\newblock arXiv:1410.3192.

\bibitem{Shahar-COLT}
Shahar Mendelson.
\newblock Learning without concentration.
\newblock In {\em Proceedings of the 27th annual conference on Learning Theory
  COLT14}, pages pp 25--39. 2014.

\bibitem{MR3364699}
Shahar Mendelson.
\newblock A remark on the diameter of random sections of convex bodies.
\newblock In {\em Geometric aspects of functional analysis}, volume 2116 of
  {\em Lecture Notes in Math.}, pages 395--404. Springer, Cham, 2014.

\bibitem{Shahar-ACM}
Shahar Mendelson.
\newblock Learning without concentration.
\newblock {\em J. ACM}, 62(3):Art. 21, 25, 2015.

\bibitem{shahar_gafa_ln}
Shahar Mendelson.
\newblock On multiplier processes under weak moment assumptions.
\newblock Technical report, Technion, 2016.

\bibitem{MR3378468}
Stanislav Minsker.
\newblock Geometric median and robust estimation in {B}anach spaces.
\newblock {\em Bernoulli}, 21(4):2308--2335, 2015.

\bibitem{MR3025133}
Sahand~N. Negahban, Pradeep Ravikumar, Martin~J. Wainwright, and Bin Yu.
\newblock A unified framework for high-dimensional analysis of {$M$}-estimators
  with decomposable regularizers.
\newblock {\em Statist. Sci.}, 27(4):538--557, 2012.

\bibitem{MR702836}
A.~S. Nemirovsky and D.~B. Yudin.
\newblock {\em Problem complexity and method efficiency in optimization}.
\newblock A Wiley-Interscience Publication. John Wiley \& Sons, Inc., New York,
  1983.
\newblock Translated from the Russian and with a preface by E. R. Dawson,
  Wiley-Interscience Series in Discrete Mathematics.

\bibitem{MR3161450}
Richard Nickl and Sara van~de Geer.
\newblock Confidence sets in sparse regression.
\newblock {\em Ann. Statist.}, 41(6):2852--2876, 2013.

\bibitem{Fan2}
Wen-Xin~Zhou Qiang~Sun and Jianqing Fan.
\newblock Adaptive huber regression: Optimality and phase transition.
\newblock {\em Preprint available in Arxive:1706.06991}, 2017.

\bibitem{rousseeuw1984robust}
Peter Rousseeuw and Victor Yohai.
\newblock Robust regression by means of s-estimators.
\newblock In {\em Robust and nonlinear time series analysis}, pages 256--272.
  Springer, 1984.

\bibitem{rousseeuw1983regression}
Peter~J Rousseeuw.
\newblock Regression techniques with high breakdown point.
\newblock {\em The Institute of Mathematical Statistics Bulletin}, 12:155,
  1983.

\bibitem{rousseeuw1993alternatives}
Peter~J Rousseeuw and Christophe Croux.
\newblock Alternatives to the median absolute deviation.
\newblock {\em Journal of the American Statistical association},
  88(424):1273--1283, 1993.

\bibitem{RV_small_ball}
Mark Rudelson and Roman Vershynin.
\newblock Small ball probabilities for linear images of high dimensional
  distributions.
\newblock Technical report, University of Michigan, 2014.
\newblock International Mathematics Research Notices, to appear.
  [arXiv:1402.4492].

\bibitem{Shalev-Schwarz}
S.~Shalev-Shwartz and S.~Ben-David.
\newblock {\em Understanding Machine Learning: From Theory to Algorithms}.
\newblock Cambridge University Press, 2014.

\bibitem{siegel1982robust}
Andrew~F Siegel.
\newblock Robust regression using repeated medians.
\newblock {\em Biometrika}, 69(1):242--244, 1982.

\bibitem{stahel1981breakdown}
Werner~A Stahel.
\newblock {\em Breakdown of covariance estimators}.
\newblock Fachgruppe f{\"u}r Statistik, Eidgen{\"o}ssische Techn. Hochsch.,
  1981.

\bibitem{stigler1986history}
Stephen~M Stigler.
\newblock {\em The history of statistics: The measurement of uncertainty before
  1900}.
\newblock Harvard University Press, 1986.

\bibitem{slope2}
Weijie Su and Emmanuel Cand{\`e}s.
\newblock S{LOPE} is adaptive to unknown sparsity and asymptotically minimax.
\newblock {\em Ann. Statist.}, 44(3):1038--1068, 2016.

\bibitem{MR1379242}
Robert Tibshirani.
\newblock Regression shrinkage and selection via the lasso.
\newblock {\em J. Roy. Statist. Soc. Ser. B}, 58(1):267--288, 1996.

\bibitem{tukey1960survey}
John~W Tukey.
\newblock A survey of sampling from contaminated distributions.
\newblock {\em Contributions to probability and statistics}, 2:448--485, 1960.

\bibitem{tukey1962future}
John~W Tukey.
\newblock The future of data analysis.
\newblock {\em The annals of mathematical statistics}, 33(1):1--67, 1962.

\bibitem{tukey74b}
J.W. Tukey.
\newblock Adress to international congress of mathematicians.
\newblock Technical report, Vancouver, 1974.

\bibitem{tukey74a}
J.W. Tukey.
\newblock T6: Order statistics.
\newblock Technical report, In mimeographed notes for Statistics 411, Princeton
  Univ., 1974.

\bibitem{MR3181133}
Sara van~de Geer.
\newblock Weakly decomposable regularization penalties and structured sparsity.
\newblock {\em Scand. J. Stat.}, 41(1):72--86, 2014.

\bibitem{MR3224285}
Sara van~de Geer, Peter B{\"u}hlmann, Ya'acov Ritov, and Ruben Dezeure.
\newblock On asymptotically optimal confidence regions and tests for
  high-dimensional models.
\newblock {\em Ann. Statist.}, 42(3):1166--1202, 2014.

\bibitem{MR1739079}
Sara~A. van~de Geer.
\newblock {\em Applications of empirical process theory}, volume~6 of {\em
  Cambridge Series in Statistical and Probabilistic Mathematics}.
\newblock Cambridge University Press, Cambridge, 2000.

\bibitem{deter_lasso}
Sara~A. van~de Geer.
\newblock The deterministic lasso.
\newblock Technical report, ETH Z{\"u}rich, 2007.
\newblock http://www.stat.math.ethz.ch/~geer/lasso.pdf.

\bibitem{MR2396809}
Sara~A. van~de Geer.
\newblock High-dimensional generalized linear models and the lasso.
\newblock {\em Ann. Statist.}, 36(2):614--645, 2008.

\bibitem{MR1641250}
Vladimir~N. Vapnik.
\newblock {\em Statistical learning theory}.
\newblock Adaptive and Learning Systems for Signal Processing, Communications,
  and Control. John Wiley \& Sons, Inc., New York, 1998.
\newblock A Wiley-Interscience Publication.

\bibitem{MR1719582}
Vladimir~N. Vapnik.
\newblock {\em The nature of statistical learning theory}.
\newblock Statistics for Engineering and Information Science. Springer-Verlag,
  New York, second edition, 2000.

\bibitem{MOFaser2000}
M.-P. Victoria-Feser.
\newblock Robust logistic regression for binomial responses.
\newblock Technical report, University of Geneva - HEC, Available at SSRN:
  https://ssrn.com/abstract=1763301 or http://dx.doi.org/10.2139/ssrn.1763301,
  2000.

\bibitem{MR3153940}
Cun-Hui Zhang and Stephanie~S. Zhang.
\newblock Confidence intervals for low dimensional parameters in high
  dimensional linear models.
\newblock {\em J. R. Stat. Soc. Ser. B. Stat. Methodol.}, 76(1):217--242, 2014.

\bibitem{MR2274449}
Peng Zhao and Bin Yu.
\newblock On model selection consistency of {L}asso.
\newblock {\em J. Mach. Learn. Res.}, 7:2541--2563, 2006.

\end{thebibliography}
\end{footnotesize}

\end{document}